\documentclass[reqno, 12pt]{amsart}
\usepackage{graphicx} 
\usepackage[utf8]{inputenc}
\usepackage[british]{babel}
\usepackage[OT2, T1]{fontenc}
\usepackage{lmodern}
\usepackage{amsmath,amssymb,amsthm,todonotes,booktabs,graphicx,longtable, bbm, nicefrac, xfrac, adjustbox,color}
\usepackage{enumitem}
\usepackage{appendix}
\usepackage{tikz-cd}
\usepackage{url, hyperref}
\usepackage[margin=1.2in, hmarginratio=1:1]{geometry}
\definecolor{dblue}{rgb}{0,0,0.7}
\definecolor{dartmouthgreen}{rgb}{0.05, 0.5, 0.06}   
\definecolor{dlilac}{rgb}{0.6, 0.33, 0.73}
\definecolor{ffuchsia}{rgb}{0.96, 0.0, 0.63}
\definecolor{hpurple}{rgb}{0.32, 0.09, 0.98}
\newtheorem{thm}{Theorem}
\newtheorem{prop}[thm]{Proposition}
\newtheorem{cor}[thm]{Corollary}
\newtheorem{lem}[thm]{Lemma}

\numberwithin{thm}{section}
\numberwithin{equation}{section}

\theoremstyle{definition}
\newtheorem{defi}[thm]{Definition}

\newtheorem{nota}[thm]{Notation}

\newcommand{\FF}{\mathbb{F}}
\newcommand{\LL}{\mathbb{L}}
\newcommand{\QQ}{\mathbb{Q}}
\newcommand{\ZZ}{\mathbb{Z}}
\newcommand{\RR}{\mathbb{R}}
\newcommand{\CC}{\mathbb{C}}

\newcommand{\NN}{\mathbb{N}}
\newcommand{\AAA}{\mathbb{A}}

\newcommand{\ZlZ}{\ZZ/ \ell\ZZ}
\newcommand{\ZmZ}{\ZZ/m\ZZ}
\newcommand{\ZpZ}{\ZZ/p\ZZ}

\newcommand{\hatM}{\hat{\mathcal{M}}'}

\DeclareMathOperator{\conj}{Conj}

\DeclareMathOperator{\tame}{tame}
\DeclareMathOperator{\wild}{wild}
\DeclareMathOperator{\Hom}{Hom}
\DeclareMathOperator{\aut}{Aut}

\DeclareMathOperator{\Gal}{Gal}

\DeclareMathOperator{\topo}{top}

\DeclareMathOperator{\unr}{nr}

\DeclareMathOperator{\age}{age}

\DeclareMathOperator{\leng}{length}
\DeclareMathOperator{\Frob}{Frob}

\DeclareMathOperator{\id}{id}
\DeclareMathOperator{\spec}{Spec}
\DeclareMathOperator{\str}{st}
\DeclareMathOperator{\Var}{Var}
\DeclareMathOperator{\rad}{rad}
\DeclareMathOperator{\et}{-\'Et}

\DeclareSymbolFont{cyrletters}{OT2}{wncyr}{m}{n}
\DeclareMathSymbol{\Sha}{\mathalpha}{cyrletters}{"58}
\vspace{10cm}
\vspace{50cm}
\vfill
\newpage

\title[The McKay correspondence for wild-by-tame split metacyclic groups]{The McKay correspondence and local heights for wild-by-tame split metacyclic groups}
\author{Julie Tavernier}
\date{March 2026}
\address{Department of Mathematical Sciences, University of Bath,
Claverton Down, Bath, BA2 7AY, UK}
\email{jlt86@bath.ac.uk}

\author{Takehiko Yasuda}
\address{Department of Mathematics, Graduate School of Science, Osaka University, Toyonaka, Osaka 560-0043, Japan}
\address{Kavli Institute for the Physics and Mathematics of the Universe, The University of Tokyo, 5-1-5 Kashiwanoha, Kashiwa, Chiba, 277-8583, Japan}
\email{yasuda.takehiko.sci@osaka-u.ac.jp}
\begin{document}
\begin{abstract}
    We study the McKay correspondence for the representations of certain wild-by-tame split metacyclic groups whose order is divisible by the characteristic of the base field.
    We calculate the stringy motive of the quotient variety and find a formula for its stringy Euler number.
    As a consequence, we prove that a crepant resolution of the quotient variety (provided one exists) does not in general have Euler characteristic equal to the number of conjugacy classes in $G$, in contrast to the classical case. In particular, we show it depends on the choice of representation as well as the group.
   As part of this, we compute the $\mathbf{v}$-function associated to a $G$-representation, corresponding to a stacky local height function.
\end{abstract}
\maketitle
\tableofcontents
\section{Introduction}

\noindent In \cite{Batyrev1999NonArchimedean}, Batyrev proved that if $G \subset SL_d(\CC)$ is a finite subgroup and there is a crepant resolution $Y \rightarrow \AAA_{\CC}^d/G$, then we have \[e(Y) = \#\conj(G)\] where $e(Y)$ is the topological Euler characteristic of $Y$ and $\#\conj(G)$ is the number of conjugacy classes of $G$.
This is a version of the McKay correspondence.
A more conceptual version of this result was later proved by Denef and Loeser in \cite{DenefLoeser2002}. 
This correspondence can be extended to fields of prime characteristic not dividing the order of the group, known as the "tame" case (see \cite{yasuda2006motivic} and \cite[$\S5.2$]{WoodYasuda2015MassI}), where we interpret $e(Y)$ to be the $\ell$-adic Euler characteristic.
The wild Mckay correspondence is a generalisation of this to when the characteristic of the base field divides $\# G$. This was proved by Yasuda in \cite{Yasuda2024WildDM}.
In this paper, we consider the wild McKay correspondence for groups of the form $G = \ZpZ \rtimes \ZmZ$, where $p$ is a prime and $m$ is an integer coprime to $p$.
Specifically, we prove a formula for the stringy motive $M_{\str}(\AAA_{k}^d/G)$ for groups of this form over an algebraically closed field $k$ of characteristic $p$.
We use this to find a formula for the stringy Euler number of $\AAA_{k}^d/G$, and find a criterion for the type of singularities that these quotient varieties can have.

\subsection{$\mathbf{v}$-functions}
A necessary step in the calculation of the wild McKay correspondence is the computation of the $\mathbf{v}$-function associated to a representation of the group.
If the order of the group is not divisible by the characteristic of the base field, the $\mathbf{v}$-function corresponds to the age function. 
However in the wild case, it is in general very difficult to compute, and has only been done in a few special cases.
The $\mathbf{v}$-function is additive with respect to direct sums, and so given any $G$-representation $V$, we may reduce to the indecomposable case. 
We state a special case of Theorem \ref{defn of v function} here. 
The full theorem may be found in Section \ref{subsec; v fn non connected}.
\begin{thm}[Theorem \ref{defn of v function}]
    Let $G= \ZpZ \rtimes \ZmZ$ and $L/k((t))$ be a field extension with Galois group $G$. 
    For $\gamma \in (\ZmZ)^{\times}$, let $I_{\gamma,r,s}$ be a subset of $\{0,1,\cdots, m-1\} \times \{0,1,\cdots, d-1\}$ of $m$ elements determined by a congruence condition modulo $m$. 
    Let $\Delta_{G,\gamma}$ denote the space of $G$-torsors corresponding to the $G$-extensions.
    Then the $\mathbf{v}$-function associated to an indecomposable $d$-dimensional representation $V_{d}$ of $G$ is the function $\mathbf{v}_{V_{d, \gamma}}:\Delta_{G,\gamma} \rightarrow \frac{1}{m}\ZZ$  given by \[\mathbf{v}_{V_{d, \gamma}}(x) = \begin{cases}
        0 & x \: \text{is trivial} \\
        \sum_{(i,j) \in I_{\gamma,r,s}}\frac{i}{m} + \sum_{(i,j) \in I_{\gamma,r,s}} 
        \left\lceil \frac{-ip - jr}{mp} \right\rceil & \text{otherwise},
    \end{cases}\] where $r$ is the last ramification jump of $L/k((t))$ and $g_x \in G$. 
    If $V = \bigoplus_{\lambda = 1}^lV_{d_{\lambda}}$ is a decomposition of $V$ into $l$ indecomposable $d_{\lambda}$-dimensional representations,
    then $\mathbf{v}_{V, \gamma} = \sum_{\lambda = 1}^l\mathbf{v}_{V_{d_{\lambda}, \gamma}}$.
\end{thm}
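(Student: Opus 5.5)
The $\mathbf{v}$-function is defined through the tuning module. For a $G$-torsor $x$, corresponding to a $G$-extension $L/k((t))$, we form the tuning module
\[ \Xi_x = \Hom_G^{k[[t]]}(\mathcal{O}_L, V\otimes k[[t]]) \subset \Hom_{k((t))}^G(L, V\otimes k((t))), \]
a free $k[[t]]$-module of rank $d$. We then set $\mathbf{v}_V(x) = \frac{1}{\#G}\,\mathrm{length}\bigl(\frac{\mathcal{O}_L\otimes V}{\mathcal{O}_L\cdot \Xi_x}\bigr)$. The plan is to compute this length directly for the indecomposable modules $V_d$, exploiting the split metacyclic structure.

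\emph{Additivity.} The final claim, $\mathbf{v}_{V,\gamma}=\sum_\lambda \mathbf{v}_{V_{d_\lambda},\gamma}$, is immediate: the tuning module of a direct sum is the direct sum of the tuning modules, and lengths add. It therefore suffices to treat a single indecomposable $V_d$ with $1\le d\le p$. Such a module restricts to $\ZpZ$ as a single Jordan block, and $\ZmZ$ acts diagonally on a basis $e_0,\dots,e_{d-1}$ through characters $\chi^{s+j\gamma'}$, where $s$ is the initial character and $\gamma$ encodes the conjugation action of $\ZmZ$ on $\ZpZ$. When $x$ is trivial, $\Xi_x=V\otimes k[[t]]$ and $\mathbf{v}=0$.

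\emph{Explicit description of $\mathcal{O}_L$.} For nontrivial $x$, I would factor $L$ as $L\supset K=k((t^{1/m}))=k((u))\supset k((t))$, with $L/K$ an Artin--Schreier extension $y^p-y=u^{-r}f$. After a change of variables I normalise this to $y^p-y=u^{-r}$ with $p\nmid r$. The congruence defining $\Delta_{G,\gamma}$ is then $r\equiv$ (something determined by $\gamma$) mod $m$. Choosing $\pi$ with $v_L(y)=-r$, I take as a $k[[u]]$-basis of $\mathcal{O}_L$ the elements $u^{\lceil jr/p\rceil}y^j$ for $0\le j<p$; this uses $\gcd(r,p)=1$. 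Since $\mathcal{O}_L$ is free over $k[[t]]$, the elements $u^i\cdot u^{\lceil jr/p\rceil}y^j$ for $0\le i<m$ form a $k[[t]]$-basis. The group acts as follows: $\sigma(y)=y+1$, and $\tau$ (the generator of $\ZmZ$) sends $u\mapsto \zeta u$ and $y\mapsto \zeta^{-r}y$ up to a compatible scaling.

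\emph{Computing $\Xi_x$.} A $G$-equivariant map $\phi:L\to V\otimes k((t))$ is determined by $\phi$ on these basis elements. $\ZpZ$-equivariance with a Jordan block of size $d$ forces $\phi$ to factor through the divided-power coordinates $\binom{y}{j}$ for $j<d$. $\ZmZ$-equivariance then selects, for each $j$, exactly those $i$ whose character $\zeta^{i-jr}$ matches the character on $e_j$. This matching is precisely the congruence modulo $m$ that cuts out $I_{\gamma,r,s}$, and it yields $m$ pairs $(i,j)$ in total. The condition that $\phi$ sends $\mathcal{O}_L$ into $V\otimes k[[t]]$ then imposes, for each such $(i,j)$, a valuation bound on the $t$-coefficient of the form $t$-order $\ge\lceil(-ip-jr)/(mp)\rceil$, because $v_t(u^iy^j)=(ip\cdot 1 - jr)/(mp)$ in normalised units. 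This gives a $k[[t]]$-basis of $\Xi_x$ indexed by $I_{\gamma,r,s}$.

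\emph{Evaluating the length.} The length of $\mathcal{O}_L\otimes V/\mathcal{O}_L\Xi_x$ is a sum of valuation shifts. Dividing by $\#G=mp$, and using the decomposition $\frac{i}{m}+\lceil\cdot\rceil$ that arises from rewriting each $v_L$-shift in terms of $t$-orders, produces the stated formula.

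\emph{Main obstacle.} I expect the hardest step to be justifying that the $k[[t]]$-module generated by these $m$ explicit maps is the full tuning module, and that the length computation is exactly a sum of individual contributions. Both require an upper-triangularity argument with respect to the filtration by $j$, in the style of Yasuda's computation for $\ZpZ$. The remaining difficulty is the careful bookkeeping of the $\tau$-twist on $y$, which is what makes the index set depend on $\gamma$, $r$ and $s$.
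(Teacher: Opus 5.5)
Your strategy matches the paper's: a $k[[t]]$-basis of $\tau$-eigenvectors $u^iy^jt^{n}$ whose $v_L$-values $ip-jr$ are pairwise distinct mod $mp$, an eigen-condition that cuts out $I_{\gamma,r,s}$, and a triangular determinant. However, the set-up contains a real error.

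\textbf{The tuning module is defined the wrong way round.} You define $\Xi_x$ as $G$-equivariant maps $\mathcal{O}_L\to V\otimes k[[t]]$. The tuning module is instead the submodule of $\mathcal{O}_L^{\oplus d}$ on which the two actions agree, i.e.\ $\Hom_G(V^{\vee},\mathcal{O}_L)\cong (V\otimes\mathcal{O}_L)^G$. These are different lattices. By trace duality, $\Hom_{k[[t]]}(\mathcal{O}_L,k[[t]])\cong\mathfrak{D}_{L/K}^{-1}$, so your module is $(V\otimes\mathfrak{D}_{L/K}^{-1})^G$. This is not contained in $\mathcal{O}_L\otimes V$. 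Already for the trivial representation it equals $\mathfrak{D}_{L/K}^{-1}\cap k((t))\supsetneq k[[t]]$, because the different exponent of a wild extension is at least $e_{L/K}$. Consequently $\mathcal{O}_L\cdot\Xi_x$ is not a submodule of $\mathcal{O}_L\otimes V$, and your length formula breaks. Two further symptoms:
\begin{itemize}
\item Integrality of a map $\phi$ out of $\mathcal{O}_L$ gives $v_t(\phi(u^iy^j))\ge -n_{i,j}$, the opposite of the bound you need.
\item $\ZpZ$-equivariance makes $\phi$ vanish on $\delta^dL$, so $\phi$ depends on the coordinates along $\binom{y}{j}$ with $j\ge p-d$, not $j<d$.
\end{itemize}
The computation you actually sketch (deciding which $t^nu^iy^j$ lie in $\mathcal{O}_L$) is the one for the correct module. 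So the repair is to use the paper's description $\Xi_L=\{(c,\delta c,\dots,\delta^{d-1}c): c\in\mathcal{O}_L^{\delta^d=0},\ \tau c=\xi_1 c\}$. Here every basis vector is determined by one $\tau$-eigenvector $c$ of the \emph{fixed} character $\xi_1=\zeta_m^s$, not a character varying with $j$ as ``the character on $e_j$'' suggests. You must also work with the monomials $u^iy^j$, which are $\tau$-eigenvectors, rather than $\binom{y}{j}$, which in general are not.

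\textbf{Index set and sign.} With the corrected module, $i-rj\equiv s\gamma^{-1}\pmod m$ has exactly one solution $i\in\{0,\dots,m-1\}$ for each $j\in\{0,\dots,d-1\}$. So $\#I_{\gamma,r,s}=d$, consistent with $\Xi$ having rank $d$; your ``$m$ pairs'' (taken from the statement) contradicts your own rank claim. Similarly, your valuation $v_t(u^iy^j)=(ip-jr)/(mp)$ gives the integrality condition $n\ge\lceil (jr-ip)/(mp)\rceil$. This is the paper's $n_{i,j}=\lceil(-iv_L(\alpha)-jv_L(\beta))/e_{L/K}\rceil$ with $v_L(\alpha)=p$ and $v_L(\beta)=-r$. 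The ``$-ip-jr$'' in the statement is a sign slip and does not follow from your computation.

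\textbf{How the paper closes your ``main obstacle''.} It uses exactly the triangularity you anticipate.
\begin{itemize}
\item $\mathcal{O}_L^{\delta^d=0}=\bigoplus k[[t]]\,\alpha^i\beta^jt^{n_{i,j}}$, because the valuations are distinct mod $mp$ (Corollary~\ref{basis for OL}). Hence the $\xi_1$-eigenspace is spanned by the terms with $(i,j)\in I_{\gamma,r,s}$.
\item $\delta^l(\alpha^i\beta^j)=0$ for $l>j$, while $\delta^j(\alpha^i\beta^j)=j!\,\alpha^i$ with $j!\neq 0$ since $j<d\le p$. So the matrix $\bigl(\delta^l(\alpha^i\beta^jt^{n_{i,j}})\bigr)$ is triangular with diagonal valuations $ip+mp\,n_{i,j}$.
\item Dividing by $\#G=mp$ gives $\frac{i}{m}+n_{i,j}$; this is where the $\frac{i}{m}$ term comes from.
\end{itemize}

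Finally, the normalisation $y^p-y=u^{-r}$ is unnecessary: it needs Pries's congruence on the exponents of $f$ and a change of the uniformiser $t$. Only $v_L(y)=-r$ and $\tau(y)=\zeta_m^{-\gamma r}y$ are used.
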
 
\noindent Previous work on calculating $\mathbf{v}$-functions in the wild case include $\mathbf{v}$-functions of permutation representations \cite[Thm.~$4.8$]{WoodYasuda2015MassI}, those of stable hyperplanes in permutation representations \cite[Cor.~$11.4$]{Yasuda2016WilderMcKay}, the $\mathbf{v}$-functions associated to representations of $p$-cyclic groups \cite[Prop.~$6.9$]{Yasuda2014pCyclicMcKay} and $p^r$-cyclic groups \cite[Thm.~$3.11$]{TannoYasuda2021WildCyclic}.
Furthermore, in \cite[$\S4$]{Yamamoto2021}, the author calculates the $\mathbf{v}$-functions associated to $3$-dimensional representations of groups of the form $\ZlZ \rtimes \ZZ/3\ZZ$, $(\ZlZ)^2 \rtimes \ZZ/3\ZZ$ and $(\ZlZ)^2 \rtimes S_3$ over fields of characteristic $3$ (where $\ell$ is a prime different to $3$). 
In \cite[$\S5$]{Yamamoto2021}, the author computes the $\mathbf{v}$-function for the $2$-dimensional representation of the group $(\ZpZ)^2$ in characteristic $p$ and shows that it is not determined by the ramification filtration. 
Moreover, the computation of the $\mathbf{v}$-function for groups of the form $G = H \times \ZpZ$ for an abelian subgroup $H$ of order coprime to $p$ has been done in \cite{Fan2024EulerCrepant}. In this work, the author also computes the $\mathbf{v}$-function for the permutation representation of the alternating group $A_4$ and $(\ZZ/2\ZZ)^2$ in characteristic $2$.
\subsection{The wild Mckay correspondence}
The wild Mckay correspondence relates the $\mathbf{v}$-function of a modular $G$-representation $V$ to an invariant of the quotient variety $X = V/G$ called the \emph{stringy motive}.
In \cite[Cor.~$16.3$]{Yasuda2024WildDM}, the second named author proves that in the wild case, the stringy motive $M_{\str}(X)$ is given by the following motivic integral
\[M_{\str}(X) = \int_{\Delta_G} \mathbb{L}^{d-\mathbf{v}_V},\] 
where $\LL$ is the class of $\AAA_k^1$ in the ring of modified Grothendieck varieties (see Definition \ref{def; grothendieck ring}). 
One of the difficulties when calculating the stringy motive in the wild case is that the space $\Delta_G$ in the motivic integral is infinite dimensional.
In order to calculate $M_{\str}(X)$, one must stratify this space into countably many finite dimensional spaces. 
For $\gamma \in \ZmZ$, we let $m_{\gamma} = \frac{m}{\gcd(m,\gamma)}$ and $\gamma^{\dagger} = \frac{\gamma}{\gcd(m,\gamma)}$. We define $\gamma^{-1} \in \ZmZ$ as follows: Let $\theta: \ZZ/m_{\gamma}\ZZ \hookrightarrow \ZmZ$ be the canonical injection. Then $\gamma^{-1} = \theta(\gamma^{\dagger -1})$.
We write 
\[\ZZ_{\gamma} = \{e \in \ZZ : p \nmid e>0, e \: \text{satisfies the congruence} \:e \equiv k \gamma^{-1} \pmod{m}\}\] 
for some $k \in \{0,1,\cdots,m-1\}$ depending on $G$ and the action of $\ZmZ$ on $\ZpZ$.
For every $\gamma$ and each $r \in \ZZ_{\gamma}$, $\Delta_{G,\gamma}(r)$ is a scheme whose $k$-points are in bijection with isomorphism classes of $G$-extensions with fixed ramification jump $r$.
The spaces $\Delta_{G,\gamma}(r)$ are of finite dimension 
\[\dim \Delta_{G,\gamma}(r) = \left\lfloor \frac{r - 1}{m_{\gamma}} \right\rfloor + 1 - \left\lfloor \frac{\left\lfloor \frac{r - 1}{m_{\gamma}} \right\rfloor + 1}{p}\right\rfloor.\]
If $V$ is a $d$-dimensional representation of $G$, 
with a decomposition into $l$ indecomposable $d_{\lambda}$-dimensional representations
\[V = \bigoplus_{\lambda = 1}^{l}V_{d_{\lambda}, s_{\lambda}},\]
 we define the invariant $D_{V}$ by \[D_{V} =\sum_{\lambda = 1}^l \frac{d_{\lambda}(d_{\lambda}-1)}{2}.\]
\begin{thm}[Theorem \ref{stringy motive thm}]
    Let $X = V/G$ be a quotient variety where $V$ is a $d$-dimensional $G$-representation and for every $\gamma \in \ZmZ$, 
    let $\mathbf{v}_{V,\gamma}$ be the $\mathbf{v}$-function associated to $V$. 
    Then if $D_{V} \geq p$, we have 
    \[ M_{\str}(X)  = \sum_{g \in \ZmZ}\LL^{d - \mathbf{v}_{V}(g)}  
         + (\LL - 1) \LL^{d - 1}
         \left(\frac{\sum_{\gamma^{-1} = 0}^{m-1}\sum_{\substack{s = 0\\s \in \ZZ_{\gamma}}}^{m_{\gamma}p - 1}
         \LL^{\dim \Delta_{G,\gamma}(s)-\mathbf{v}_{V,\gamma}(s)}}
         {1-\LL^{p - 1 - D_{V}}} \right).\]
\end{thm}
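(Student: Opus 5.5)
We start from the wild McKay correspondence \cite[Cor.~$16.3$]{Yasuda2024WildDM}, which gives $M_{\str}(X)=\int_{\Delta_G}\LL^{d-\mathbf{v}_V}$, and stratify $\Delta_G$. The first split is by the image of the torsor in $H^1(k((t)),\ZmZ)$. A $G$-torsor whose $p$-part is trivial is a tame $\ZmZ$-torsor. These are indexed by $g\in\ZmZ$, each stratum being a point. On such a stratum the $\mathbf{v}$-function is the age, written $\mathbf{v}_V(g)$. This gives the first sum $\sum_{g}\LL^{d-\mathbf{v}_V(g)}$.

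The remaining torsors have nontrivial $p$-part. We organise them by $\gamma$ (equivalently $\gamma^{-1}$) and by the last ramification jump $r\in\ZZ_\gamma$. This gives $\Delta_G\setminus\{\text{tame}\}=\bigsqcup_{\gamma}\bigsqcup_{r\in\ZZ_\gamma}\Delta_{G,\gamma}(r)$. We then need the class $[\Delta_{G,\gamma}(r)]$, not just its dimension. I would prove, or take from the construction of these strata via Artin--Schreier equations with coefficients in the suitable eigenspace, that
\[[\Delta_{G,\gamma}(r)]=(\LL-1)\LL^{\dim\Delta_{G,\gamma}(r)-1}.\]
The reasoning is as follows. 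The space is parametrised by Artin--Schreier polynomials $\sum a_j t^{-j}$. The exponents $j\le r$ run over the allowed congruence class mod $m_\gamma$, with the $p$-divisible exponents removed. The leading coefficient $a_r$ must be nonzero, and we quotient by the $\FF_p^\times$-type action, which does not change the motivic class. The $\mathbf{v}$-function is constant on each stratum by Theorem \ref{defn of v function}, since it depends only on $r$ and $\gamma$. The contribution of this part is therefore
\[(\LL-1)\LL^{d-1}\sum_\gamma\sum_{r\in\ZZ_\gamma}\LL^{\dim\Delta_{G,\gamma}(r)-\mathbf{v}_{V,\gamma}(r)}.\]

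Next we establish quasi-periodicity in $r$. Write $r=s+nm_\gamma p$ with $0\le s<m_\gamma p$; then $r\in\ZZ_\gamma$ if and only if $s\in\ZZ_\gamma$. For the dimension, the displayed formula gives $\lfloor (r-1)/m_\gamma\rfloor+1=\lfloor (s-1)/m_\gamma\rfloor+1+np$. Hence $\dim\Delta_{G,\gamma}(r)=\dim\Delta_{G,\gamma}(s)+n(p-1)$; the case $s=0$ is excluded since $p\nmid r$. For the $\mathbf{v}$-function, each summand $\lceil(-ip-jr)/(mp)\rceil$ changes by $-jn\,m_\gamma p/(mp)$ under this shift. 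Summing over $(i,j)\in I_{\gamma,r,s}$ and over the indecomposable summands should give $\mathbf{v}_{V,\gamma}(r)=\mathbf{v}_{V,\gamma}(s)+nD_V$. This is where $D_V=\sum d_\lambda(d_\lambda-1)/2$ enters, as the sum over $j=0,\dots,d_\lambda-1$. This step is the main obstacle. One must check how $I_{\gamma,r,s}$ depends on $r$ only mod $m$ (or $m_\gamma$), and that the index set contributes exactly $\sum_j j$ with the correct normalisation $m_\gamma/m$ when $\gamma$ is not a unit. I would first treat $\gamma\in(\ZmZ)^\times$, where $m_\gamma=m$ and each $j$ appears once in $I_{\gamma,r,s}$. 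I would then reduce the general case to this one by pulling back along $\theta$.

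Finally, summing the resulting geometric series $\sum_{n\ge0}\LL^{n(p-1-D_V)}=1/(1-\LL^{p-1-D_V})$ yields the formula. The hypothesis $D_V\ge p$ ensures the exponent is negative, so the series converges in the completed ring. Under the same hypothesis the motivic integral is finite and the rearrangement of the countable sum is legitimate.
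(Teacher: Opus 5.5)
Your plan is the paper's proof: tame torsors contribute one point per conjugacy class of $p'$-elements (the paper identifies these classes with $\ZmZ$ via Suzuki's theorem, which is the Schur--Zassenhaus step you leave implicit), each $\Delta_{G,\gamma}(r)$ has class $(\LL-1)\LL^{\dim\Delta_{G,\gamma}(r)-1}$ as in Lemma \ref{lem; correspondence}, and the periodicity of Lemmas \ref{lem: change of var dim} and \ref{lem: change of var v fn} under $r\mapsto r+nm_\gamma p$ produces the geometric series. The step you flag is settled by Proposition \ref{prop; non-connected torsors}: after passing to $H_\gamma$ by convertibility the summands are $\lceil(-ip+jr)/(m_\gamma p)\rceil$, because $v_L(\alpha)=p$, $v_L(\beta)=-r$ and $e_{L/K}=m_\gamma p$, so there is no factor $m_\gamma/m$ and the sign is $+jr$ (your $-jr$ would make the shift negative and the series diverge); since $I_{\gamma,r,s}$ depends only on $r \bmod m_\gamma$ and contains exactly one pair for each $j$, each summand gains exactly $nj$, giving $\mathbf{v}_{V,\gamma}(r)=\mathbf{v}_{V,\gamma}(s)+nD_V$.
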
 \noindent From this theorem we also obtain information on the stringy Euler number of the quotient variety $X$, 
a generalisation of the Euler characteristic in the smooth setting. 
\begin{prop}[Proposition \ref{prop;str euler number}]
    Let $X = V/G$ be the quotient variety, where $G = \ZpZ \rtimes \ZmZ$ and $V$ a $d$-dimensional $G$-representation. Then we have \[ e_{\str}(X) = \frac{m D_V}{D_V - p + 1}.\]
\end{prop}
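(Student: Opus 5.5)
The plan is to specialise the formula of Theorem \ref{stringy motive thm} at $\LL = 1$. The stringy Euler number is obtained from $M_{\str}(X)$ by taking the limit $\LL \to 1$, i.e. by applying the Euler characteristic realisation to the series. The summands are powers $\LL^{a}$ with $a \in \frac{1}{m}\ZZ$, and these all specialise to $1$. So I evaluate the two parts of the formula separately. The first sum, $\sum_{g\in\ZmZ}\LL^{d-\mathbf{v}_V(g)}$, has $m$ terms and contributes exactly $m$.

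The second part is the real content. It has the shape $(\LL-1)\LL^{d-1}\,N(\LL)/(1-\LL^{p-1-D_V})$. This quotient is of the form $0/0$ at $\LL=1$, so the right move is to form the ratio $(\LL-1)/(1-\LL^{p-1-D_V})$ and take its limit. Set $c = D_V - p + 1 > 0$, which is positive by the hypothesis $D_V \geq p$. Then
\[ \frac{\LL-1}{1-\LL^{-c}} = \frac{\LL^{c}(\LL-1)}{\LL^{c}-1}, \]
and this tends to $1/c$ as $\LL \to 1$. To be rigorous when $c$ is fractional, I would substitute $\LL = T^{m}$ (or use the common denominator of the exponents) and factor $T^{mc}-1$ through $T-1$. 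Meanwhile $\LL^{d-1}\to 1$, and the numerator $N(1)$ is simply the number of terms in the double sum: the pairs $(\gamma, s)$ with $\gamma^{-1}$ ranging over $0,\dots,m-1$ and $s\in\ZZ_\gamma\cap[0,m_\gamma p-1]$.

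The main obstacle is counting these terms. I need to show that the total is $m(p-1)$, so that the second part contributes $m(p-1)/(D_V-p+1)$. Then
\[ e_{\str}(X) = m + \frac{m(p-1)}{D_V-p+1} = \frac{mD_V}{D_V-p+1}, \]
as claimed. For a fixed $\gamma$, the set $\ZZ_\gamma$ consists of the integers prime to $p$ lying in a single residue class $k\gamma^{-1}$ modulo $m$. Since $\gamma^{-1}$ is defined through $\ZZ/m_\gamma\ZZ \hookrightarrow \ZmZ$, this class is really a class modulo $m_\gamma$ (or a class mod $m$ which is a multiple of $\gcd(m,\gamma)$). In a window of length $m_\gamma p$, I expect exactly $p$ integers congruent to a fixed residue modulo $m_\gamma$, because $\gcd(m_\gamma,p)=1$. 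By CRT exactly one of these is divisible by $p$, which leaves $p-1$ of them. Some care is needed with the positivity condition $e>0$ at $s=0$ and with the exact modulus; I would settle these by unwinding the definition of $\ZZ_\gamma$ from the body of the paper. Summing $p-1$ over the $m$ values of $\gamma^{-1}$ then gives $m(p-1)$.

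If instead the indexing allows only the $\gamma$ with nonempty $\ZZ_\gamma$ (for example only units), I would redo the count to confirm that the total is still $m(p-1)$. This is what the stated answer forces. A sanity check is $m=1$: there the result is $pD_V/(D_V-p+1)$ from the $p$-cyclic case, which is consistent with \cite{Yasuda2014pCyclicMcKay}.
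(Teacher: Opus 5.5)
Your proof is correct and follows the paper's own route. Setting $\LL=1$ in Theorem \ref{stringy motive thm}, the tame classes give $m$ and the wild part gives $m(p-1)/(D_V-p+1)$, since each of the $m$ values of $\gamma^{-1}$ contributes exactly $p-1$ jumps per period $m_\gamma p$ once the congruence defining $\ZZ_\gamma$ is read modulo $m_\gamma$. That is the reading your CRT count uses, and the paper's $S_3$ computation with $\ZZ_0=\{1,2\}$ confirms it; the result is exactly the paper's remark $e_{\str}(X)=\#\conj^{\tame}(G)+\frac{m(p-1)}{D_V-p+1}$.

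One slip in your closing aside: for $m=1$ the formula gives $D_V/(D_V-p+1)=1+\frac{p-1}{D_V-p+1}$, not $pD_V/(D_V-p+1)$.
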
 \noindent An interesting facet of our formula for the stringy Euler number is that it depends not just on the group $G$, but also on the choice of representation.
We highlight this phenomenon in Section \ref{subsec; S_3 examples stringy motive}, where we use the formula in Theorem \ref{stringy motive thm}
to calculate the stringy Euler number for two different representations of $S_3$
and show they do not coincide.
We also show that in the $6$-dimensional example our formula matches the motivic version of Bhargava's mass formula.
Providing the existence of a crepant resolution $f:Y \rightarrow X$, we obtain a formula for the $\ell$-adic Euler characteristic of $Y$ in the form of Batyrev's formula.
\begin{cor}
    Let $X = V/G$ and suppose there exists a crepant resolution $f: Y \rightarrow X$. Let $e_{\topo}(Y)$ denote the $\ell$-adic Euler characteristic of $Y$. Then we have
    \[e_{\topo}(Y) = \frac{m D_V}{D_V - p + 1}.\]
\end{cor}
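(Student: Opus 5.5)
The plan is to derive the corollary from Proposition \ref{prop;str euler number} by way of the standard comparison between stringy invariants and crepant resolutions. I first recall what the stringy motive of $X$ means in the wild setting of \cite{Yasuda2024WildDM}. For a normal $\QQ$-Gorenstein variety $X$ with a resolution $f\colon Y\to X$, $M_{\str}(X)$ is the motivic integral $\int_{J_\infty Y}\LL^{-\mathrm{ord}\,K_{Y/X}}$ over the arc space of $Y$. This is well defined, meaning independent of the resolution, by the change of variables formula. When $f$ is crepant we have $K_{Y/X}=0$, so the integrand is identically $1$ and the integral reduces to the motivic measure of $J_\infty Y$. Since $Y$ is smooth of dimension $d$, that measure is $[Y]$, up to the normalisation $\LL^{-d}$ or none, whichever the paper's Definition uses. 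In the quotient situation, \cite[Cor.~16.3]{Yasuda2024WildDM} identifies this same invariant with $\int_{\Delta_G}\LL^{d-\mathbf{v}_V}$, which is the expression computed in Theorem \ref{stringy motive thm}. So the first step gives $M_{\str}(X)=[Y]$ in the completed, modified Grothendieck ring.

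Next I pass to Euler numbers. The $\ell$-adic Euler characteristic $e_{\topo}$ is a motivic measure on (modified) varieties: it is additive, multiplicative, sends $\LL\mapsto1$, and is insensitive to the universal homeomorphisms used in the modified ring. The stringy Euler number is defined by applying this realization to $M_{\str}(X)$. Since $M_{\str}(X)$ is generally an infinite sum, I would use the paper's definition, which is presumably the limit $\LL\to1$ of the rational-function expression, or equivalently the Poincaré/Euler realization extended to the completion along a subring of rational functions in $\LL^{1/m}$ that are regular at $\LL=1$. Because $[Y]$ is an honest class of a variety, its realization is simply $e_{\topo}(Y)$. Hence $e_{\topo}(Y)=e_{\str}(X)$, and Proposition \ref{prop;str euler number} gives $e_{\topo}(Y)=\frac{mD_V}{D_V-p+1}$.

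The main obstacle is compatibility of the realization with the completion. I need the Euler characteristic realization of $[Y]$, computed as a genuine variety, to agree with the value obtained by summing the geometric series in Theorem \ref{stringy motive thm} and evaluating at $\LL=1$. This requires that the series converges, which is where the hypothesis $D_V\ge p$ enters, since it makes $1-\LL^{p-1-D_V}$ a legitimate denominator. It also requires that $M_{\str}(X)$ lies in the subring on which $e$ extends continuously. I would handle this by noting that both sides live in the subring of the completed ring consisting of rational functions in $\LL^{1/m}$ with denominators that are products of $1-\LL^{-a}$, $a>0$, on which the substitution $\LL^{1/m}\mapsto 1$ is well defined after cancellation. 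Here $[Y]$ lies in the image of the uncompleted ring. Injectivity issues of the completion map do not matter, because the realization factors through the Poincaré polynomial/Hodge–Deligne type specialization, which is defined on the completed ring. Everything else is formal.
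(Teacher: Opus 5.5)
Your proposal is correct and follows essentially the same route as the paper. A crepant resolution gives $M_{\str}(X)=[Y]$ via the wild McKay correspondence, the Euler characteristic realization then gives $e_{\topo}(Y)=e_{\str}(X)$, and Proposition \ref{prop;str euler number} supplies the formula; your remarks on extending the realization to $\hat{\mathcal{M}}'$ through a weight/Poincar\'e polynomial realization (with convergence coming from $D_V\ge p$) just make explicit a step the paper treats in one line.
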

\noindent In particular, the crepant resolution of the quotient variety in the wild case does not in general have Euler characteristic $\# \conj(G)$, and hence does not match the formula in the tame case.
This contrasts with \cite[Thm.~$1.2$]{Fan2024EulerCrepant}, which shows that when $G = H \rtimes \ZpZ$ where $H$ is an abelian normal subgroup of index $p$ and such that $p \nmid \# H$, Batyrev's formula for the Euler characteristic of a crepant resolution holds even in the wild case.
\subsection{$a$- and $b$-invariants}

The $\mathbf{v}$-function is an example of a raising function in the sense of \cite[Def.~$4.1$]{darda2025batyrevmaninconjecturedmstacks} and so has associated $a$-invariant, 
defined by  
\[a(\mathbf{v}) = \sup_{g \in \mathbf{v}(\Delta_G) \backslash \{0\}} \frac{1 + \dim(\mathbf{v}^{-1}(g))}{g}.\] 
The $a$-invariant contains information on the type of singularities of the quotient variety $X = \AAA_k^{d}/G$,
where $d$ is the dimension of the representation. 
We have that $X$ has canonical (resp. terminal) singularities if 
\[\dim \left( \int_{\Delta_G \backslash \{0\}} \LL^{-\mathbf{v}} \right) \leq -1  \: (\text{resp}. <-1),\] 
where $\int_{\Delta_G \backslash \{0\}} \LL^{-\mathbf{v}}$ is the integral defining $M_{\str}(X)$. 
Moreover, we have the relation
\[a(\mathbf{v}) \leq 1 \: (\text{resp}. <1) \iff \dim \left( \int_{\Delta_G \backslash \{0\}} \LL^{-\mathbf{v}} \right) \leq -1  \: (\text{resp}. <-1). \]
In the case of $G = \ZpZ \rtimes \ZmZ$, the $a$-invariant is given by the formula
\[a(\mathbf{v}) = \max \left(\max_{[g] \in \conj^{\tame}(G)\backslash\{ [1]\} }(\age(g))^{-1},
\max_{r \in \cup_{\gamma^{-1}=0}^{m-1}\ZZ_{\gamma}}\frac{1 + \dim \Delta_{G,\gamma}(r)}{\mathbf{v}_{V,\gamma}(r)} \right).\]
From this, we obtain the following result on the singularities of the quotient variety $X = \AAA_k^{d}/G$. 
\begin{thm}
    Let $G = \ZpZ\rtimes\ZmZ$ and $X = \AAA_k^{d}/G$.
    $X$ has canonical (resp. terminal) singularities if and only if $\age(g) \geq 1$ (resp. $> 1$)
    for all
    $[g] \in \conj^{\tame}(G)\backslash  \{[1]\}$
    and 
    $\dim \Delta_{G,\gamma}(r) - \mathbf{v}_{V, \gamma }(r) \leq -1$ (resp. $< -1$)
    for all $r \in \ZZ_{\gamma}$ in the range $\{1,\cdots m_{\gamma}p -1\}$.
\end{thm}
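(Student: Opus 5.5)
The plan is to combine the criterion already stated, that $X$ is canonical (resp. terminal) if and only if $\dim\left(\int_{\Delta_G\setminus\{0\}}\LL^{-\mathbf{v}}\right)\le -1$ (resp. $<-1$), with the stratification of $\Delta_G$ behind Theorem \ref{stringy motive thm}. The component of $\Delta_G$ with trivial $p$-part is the finite set of tame $G$-torsors. It is indexed by $\conj^{\tame}(G)$, and on it $\mathbf{v}_V$ is the age. Each nontrivial class therefore contributes $\LL^{-\age(g)}$, whose dimension is $-\age(g)$.

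The wild part is the disjoint union of the finite-dimensional strata $\Delta_{G,\gamma}(r)$ with $r\in\ZZ_\gamma$. On each stratum $\mathbf{v}$ is constant, equal to $\mathbf{v}_{V,\gamma}(r)$, by Theorem \ref{defn of v function}, since it depends only on $r$ and $\gamma$. The wild contribution is thus $\sum_{\gamma}\sum_{r}[\Delta_{G,\gamma}(r)]\LL^{-\mathbf{v}_{V,\gamma}(r)}$. Because all classes here are effective with positive leading coefficients, no cancellation occurs. Hence the dimension of the whole integral is the supremum of $-\age(g)$ over nontrivial tame classes and of $\dim\Delta_{G,\gamma}(r)-\mathbf{v}_{V,\gamma}(r)$ over all $r$. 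The condition that this supremum is $\le -1$ (resp. $<-1$) is then exactly the termwise condition, once we know the supremum over the infinitely many $r$ is controlled by finitely many of them.

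The key step is a periodicity. I will show that replacing $r$ by $r+m_\gamma p$ (which preserves membership in $\ZZ_\gamma$) changes $\dim\Delta_{G,\gamma}(r)-\mathbf{v}_{V,\gamma}(r)$ by exactly $p-1-D_V$.
\begin{itemize}
\item From the dimension formula, $\lfloor (r-1)/m_\gamma\rfloor$ increases by $p$, so $\dim\Delta_{G,\gamma}(r)$ increases by $p-1$.
\item From the formula for $\mathbf{v}$, each ceiling term $\lceil(-ip-jr)/(mp)\rceil$ decreases by $j\,m_\gamma/m$, summed over $I_{\gamma,r,s}$ and over the indecomposable summands.
\item The index set $I_{\gamma,r,s}$ is defined by a congruence mod $m$ that depends on $r$ only modulo $m$, so it is unchanged.
\item The sum of these decrements must come out to $D_V=\sum_\lambda d_\lambda(d_\lambda-1)/2$. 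This is consistent with the geometric ratio $\LL^{p-1-D_V}$ appearing in Theorem \ref{stringy motive thm}.
\end{itemize}
I expect this bookkeeping to be the main obstacle. It requires checking that $I_{\gamma,r,s}$ contains each $j\in\{0,\dots,d-1\}$ the right number of times, so that $\sum j\cdot(m_\gamma/m)$ really gives $d(d-1)/2$ per indecomposable. I would extract this directly from the derivation of the geometric series in the proof of Theorem \ref{stringy motive thm} rather than recompute it.

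Now assume $D_V\ge p$, so that $p-1-D_V\le -1$. The quantity then strictly decreases along each residue class of $r$ modulo $m_\gamma p$, and its supremum is attained at the representatives $r\in\{1,\dots,m_\gamma p-1\}$. This gives both directions of the equivalence, for canonical with $\le -1$ and for terminal with $<-1$.

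If instead $D_V<p$, the series diverges: the wild dimensions grow without bound, and $X$ is neither canonical nor terminal. In that case I would check that the stated condition also fails. The step $r\mapsto r+m_\gamma p$ does not decrease the quantity, and it is already $>-1$ at a small $r$. For this I would take $r=1$ with a nontrivial extension and estimate $\dim\Delta_{G,\gamma}(1)=1$ against $\mathbf{v}$ directly. If that estimate fails, I would add the hypothesis $D_V\ge p$, as in Theorem \ref{stringy motive thm}.
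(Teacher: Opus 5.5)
Your overall route matches the paper's. The paper argues via $a(\mathbf{v})\le 1$ and Lemma~\ref{lem; formula for a inv}, which is your criterion $\dim\int_{\Delta_G\setminus\{0\}}\LL^{-\mathbf{v}}\le -1$ read stratum by stratum. The paper simply asserts the restriction to $r\in\{1,\dots,m_\gamma p-1\}$; your periodicity argument is what justifies that step, and it is correct once $D_V\ge p$.

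\textbf{The gap is the case $D_V<p$.} The statement carries no hypothesis $D_V\ge p$, so your fallback of adding one proves a weaker theorem. Your proposed test point also cannot work. On the stratum $\gamma=0$ one has $\dim\Delta_{G,0}(1)-\mathbf{v}_{V,0}(1)=1-\sum_\lambda(d_\lambda-1)$, which is $\le -1$ for instance when $p=5$ and $V=V_3$, although there $D_V=3<p$. Moreover, for $D_V=p-1$ the step $r\mapsto r+m_\gamma p$ changes the quantity by $0$, so nothing ``grows without bound''.

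The point to use is $\gamma=0$, $r=p-1$, which lies in the range. By Corollary~\ref{cor;non connected dim}, $\dim\Delta_{G,0}(p-1)=p-1$. By Corollary~\ref{cor; wild v funct}, with $v_L(\beta)=-r$ and $d_\lambda\le p$, you get $\mathbf{v}_{V,0}(p-1)=\sum_\lambda\sum_{j=0}^{d_\lambda-1}\lceil j(p-1)/p\rceil=D_V$. So the stated condition at this single point reads $p-1-D_V\le -1$ (resp.\ $<-1$); it already forces $D_V\ge p$ (resp.\ $D_V>p$). Conversely, if $D_V\le p-1$ (resp.\ $D_V\le p$), this stratum alone gives $\dim\int_{\Delta_G\setminus\{0\}}\LL^{-\mathbf{v}}\ge 0$ (resp.\ $\ge -1$), so $X$ is not canonical (resp.\ not terminal). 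Both sides of the equivalence therefore fail together, no extra hypothesis is needed, and in the remaining case your monotonicity argument finishes the proof.

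\textbf{The bookkeeping step is wrong as written.} With denominator $mp$ and shift $m_\gamma p$, the ceiling does not move by the non-integer $jm_\gamma/m$. The sign is also reversed: if $\mathbf{v}$ decreased, $\dim-\mathbf{v}$ would change by $p-1+D_V$, contradicting your own claimed step $p-1-D_V$.

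On $\Delta_{G,\gamma}$ use Proposition~\ref{prop; non-connected torsors} instead. The relevant term is $\lceil(-ip+jr)/(m_\gamma p)\rceil$, which increases by exactly $j$. For each $j$, the congruence $i-rj\equiv s\gamma^{\dagger -1}\pmod{m_\gamma}$ has exactly one solution $i\in\{0,\dots,m_\gamma-1\}$. Hence $I_{\gamma,r,s}$ contains each $j\in\{0,\dots,d-1\}$ exactly once, and the increments sum to $d(d-1)/2$. This is Lemma~\ref{lem: change of var v fn} with $m$ replaced by $m_\gamma$. Your version would need each $j$ to occur $m/m_\gamma$ times, and that search would fail.
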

\noindent This shows for example that it is possible for $X$ to not have canonical singularities,
even if $\age(g) \geq 1$ for all $[g] \in \conj^{\tame}(G)\backslash \{[1] \}$. 
This is in contrast to the tame case where the two conditions are equivalent, highlighting one of the ways that wild quotient singularities behave pathologically.  

\subsection{Counting wild extensions}

Another important application of $\mathbf{v}$-functions is in counting problems studied in arithmetic statistics. 
Recently there has been much interest in the interpreting problems about number field counting via counting rational points on stacks, for example in \cite{ellenbergmaninheights, DardaYasuda2024BatyrevManinDMStacks, loughransantens}.
 However comparatively little work has been done on the case of fields of positive characteristic, especially in the wild case, owing to the difficult nature of the problem. 
 Even so, in recent years there has been increasing amounts of interest regarding the problem of counting wildly ramified field extensions, such as in the works \cite{Lagemann2015, Gundlach2026, wildcount, darda2025countingtorsorswildabelian, Potthast2026}.
An issue that arises is that many of the height functions used when counting over number fields are not height functions in the wild case.
In \cite{darda2025batyrevmaninconjecturedmstacks}, the authors set up a framework for counting field extensions of positive characteristic using stacks.
This includes a new type of height function, that is flexible enough to be used to count on wild stacks. 
In their framework, the $\mathbf{v}$-function corresponds to a local height function. 
In \cite{darda2025countingtorsorswildabelian}, the authors show the $\mathbf{v}$-function is strongly suitable, and thus is a reasonable candidate as a choice height function for counting wild $G$-torsors.
It also works as a height function in the sense of \cite[Def.~$2.11$]{ellenbergmaninheights}, using their definition of heights with respect to a given vector bundle on a stack.
Height functions coming from group representations, such as $\mathbf{v}$-functions, have previously been studied in a variety different settings.
Local heights associated to the representations of a group were considered in the works \cite{DenefLoeser2002, Yasuda2014pCyclicMcKay, WoodYasuda2015MassI, towardmotivicint}.
In the global case, they feature in the works of Dummit in \cite{dummit2014} and of the second named author in \cite{Yasuda2015}, as well as in \cite{ellenbergmaninheights}.
Although there are no global counting results in this work, we prove a result on counting local $G$-extensions with fixed ramification jump $r$, using the structure of each stratum of the moduli space $\Delta_G$.
While the problem of counting tamely ramified local extensions is rather trivial, it becomes quite difficult in the wild case.
 It can often be the case that the local counting problem itself is a major obstacle to the counting of wildly ramified global fields.
\begin{prop}[Proposition \ref{g ext prop}]
Let $\gamma \in (\ZmZ)^{\times}$ be as in Lemma \ref{Pries lem} and such that $\gamma$ satisfies $\gamma = \gamma^q$ in $\ZmZ$. 
     Then the number of $G$-extensions of $\FF_q((t))$ with given ramification jump $r \in \ZZ_{\gamma}$ is \[|Z(G)|(q-1)q^{ \left\lfloor \frac{r - 1}{m} \right\rfloor - \left\lfloor \frac{\left\lfloor \frac{r - 1}{m} \right\rfloor + 1}{p}\right\rfloor},\] where $Z(G)$ is the centre of $G$.
     Moreover, if $\gamma \neq \gamma^q$ in $\ZmZ$, there are no such $G$-extensions.
\end{prop}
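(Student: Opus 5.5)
We count $G$-extensions of $K=\FF_q((t))$ by working over the algebraic closure and then taking Frobenius-fixed points. A $G$-extension with $G=\ZpZ\rtimes\ZmZ$ consists of two layers. The first is a tame cyclic layer $K'=K(t^{1/m})$ (up to twist), which is totally ramified of degree $m$. The second is an Artin--Schreier layer $L=K'(y)$ with $y^p-y=f$, $f\in K'$, where $\ZmZ$ acts on the $\ZpZ$ part through the character $\gamma$ of Lemma \ref{Pries lem}.

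The plan has four steps.

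\textbf{Step 1: normal form for the Artin--Schreier equation.} Following the description of $\Delta_{G,\gamma}(r)$ (via Lemma \ref{Pries lem}), reduce $f$ modulo $\wp(K')$, where $\wp(y)=y^p-y$. We may take $f$ to be a polynomial in $u^{-1}$, with $u=t^{1/m}$, using only exponents $e\le r$ such that
\begin{itemize}
\item $p\nmid e$, and
\item $e\equiv k\gamma^{-1}\pmod m$, which is exactly what $\ZmZ$-equivariance requires.
\end{itemize}
The leading coefficient $a_r$ must be nonzero, since $r$ is the ramification jump. Counting admissible exponents gives the number of free coefficients:
\[\left\lfloor\frac{r-1}{m}\right\rfloor-\left\lfloor\frac{\lfloor\frac{r-1}{m}\rfloor+1}{p}\right\rfloor,\]
together with one coefficient in $\mathbb{G}_m$. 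This is consistent with the stated dimension of $\Delta_{G,\gamma}(r)$.

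\textbf{Step 2: rationality over $\FF_q$.} Determine when the extension descends to $\FF_q((t))$ with Galois group $G$. The tame layer is defined over $K$. For the Frobenius to commute with the semidirect-product action, the character through which $\ZmZ$ acts on $\ZpZ$ (equivalently on $\mu_m$ versus the Artin--Schreier part) must be Frobenius-stable. Since Frobenius raises roots of unity to the $q$-th power, this is the condition $\gamma^q=\gamma$ in $\ZmZ$. If $\gamma\neq\gamma^q$, the Frobenius conjugate of any such extension has a different $\gamma$-type, so no extension of this type is defined over $\FF_q$. This gives the second assertion.

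\textbf{Step 3: counting points and the centre.} When $\gamma=\gamma^q$, the coefficients $a_e$ range over $\FF_q$: there are $q-1$ choices for $a_r$ and $q$ for each of the remaining free coefficients. This gives
\[(q-1)\,q^{\lfloor\frac{r-1}{m}\rfloor-\lfloor\frac{\lfloor\frac{r-1}{m}\rfloor+1}{p}\rfloor}\]
isomorphism classes of $G$-torsors, i.e. $\FF_q$-points of $\Delta_{G,\gamma}(r)$. To pass to $G$-extensions we must account for automorphisms. The factor $|Z(G)|$ arises because a Galois $G$-extension with a fixed identification of its Galois group with $G$ is a torsor together with a rigidification. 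Each $\FF_q$-point of the stratum corresponds to torsors twisted by $H^1$ of the centraliser, i.e. the stabiliser of the homomorphism $\Gal\to G$ under conjugation, which is $Z(G)$ for surjective maps. We would make this precise by an orbit–stabiliser argument, comparing surjections $\Gal(\bar K/K)\to G$ with points of the moduli stack.

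\textbf{Main obstacle.} The hardest part is Steps 2–3 together. We must check that the normal form of Step 1 is canonical over $\FF_q$, not only over $\bar\FF_q$, so that the $\FF_q$-points of the stratum really biject with extensions. This requires care with the twists of the tame layer (choices of $t^{1/m}$ by a unit) and with Hilbert 90 for $\wp$ over $\FF_q$.

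My first approach would be to count directly over $\FF_q$. One can normalise the uniformiser and show that the remaining freedom, namely the residual tame twists together with the $\ZpZ$-automorphisms, acts on the coefficient set with stabilisers of size $|Z(G)|$. Orbit–stabiliser then yields the stated formula.
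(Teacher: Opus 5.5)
Your Step 2 is fine. It is the paper's argument in arithmetic form: Frobenius sends the tame component indexed by $\gamma$ to the one indexed by $\gamma^q$.

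The gap is in Step 3, where the whole count happens. Letting the normal-form coefficients range over $\FF_q$ does not parametrise the $\FF_q$-points of $\Delta_{G,\gamma}(r)$. Over $\overline{\FF}_q$ the stratum is $(\mathbb{G}_m\times\AAA^{d})/\mu_{m/n^{\dagger}}$ by Lemma \ref{lem; correspondence}, not $\mathbb{G}_m\times\AAA^{d}$, where $d=\dim\Delta_{G,\gamma}(r)-1$. This has two consequences over a fixed tame layer $K(t^{1/m})$:
\begin{enumerate}
\item rational normal forms in the same $\mu_{m/n^{\dagger}}$-orbit give the same torsor;
\item other $\FF_q$-points have no rational representative, and appear only over the twisted layers $K(\sqrt[m]{ct})$ with $c\in\FF_q^{\times}/\FF_q^{\times m}$. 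Here $\gamma=\gamma^q$ with $\gamma$ a unit forces $\mu_m\subset\FF_q$.
\end{enumerate}
Already for $G=S_3$, $p=3$, $r=1$ this is visible. Over $K(\sqrt t)$ the equations $y^3-y=\pm a\,t^{-1/2}$ define the same extension (replace $y$ by $-y$), so only $(q-1)/2$ extensions live there. The other $(q-1)/2$ live over $K(\sqrt{ct})$ with $c$ a non-square.

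So your ``$q-1$ choices for $a_r$'' gives the right number only because these two effects cancel. The check you defer, that the normal form is canonical over $\FF_q$, is in fact false. Relatedly, $(q-1)q^{d}$ is the number of $\FF_q$-points of the coarse space, equivalently the number of surjections $\Gal_K\to G$ of type $\gamma$ and jump $r$ divided by $|G|$. It is not the number of isomorphism classes of torsors. The latter is what the proposition counts, and the passage between them is exactly your $H^1(\FF_q,Z(G))$-twisting. Your proposal states that twisting but labels the two quantities inconsistently.

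The paper avoids this bookkeeping geometrically:
\begin{enumerate}
\item it shows $\Delta_{G,\gamma}(\le r)$ is defined over $\FF_q$;
\item it identifies the coarse space over $\overline{\FF}_q$ with the ind-perfection of $\AAA^{d+1}/\mu_{m/n^{\dagger}}$;
\item it applies the Lefschetz trace formula with $H^*_c(\AAA^{d+1}/\mu_{m/n^{\dagger}})=H^*_c(\AAA^{d+1})$ to get $q^{d+1}$ points;
\item it subtracts the $\le r-1$ locus and accounts for the automorphism group $Z(G)$.
\end{enumerate}

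If you want to keep a direct count, count surjections rather than coefficients:
\begin{enumerate}
\item There are $m$ tame characters $\bar\rho$ of type $\gamma$, one for each tame layer.
\item For each, the Artin--Schreier reduced form over $\FF_q((u))$ is unique and $\tau$ acts on it diagonally. This gives exactly $(q-1)q^{d}$ eigenclasses with jump $r$, hence $(q-1)q^{d}/(p-1)$ fields $L$.
\item Each $L$ has $p(p-1)$ identifications $\Gal(L/K)\cong G$ lifting $\bar\rho$ when $G$ is nonabelian. When $G$ is abelian there are only $p-1$, and a constant term in $\FF_q/\wp(\FF_q)$ supplies the missing factor $p$.
\item This gives $|G|(q-1)q^{d}$ surjections. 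Conjugation orbits have size $|G|/|Z(G)|$, which yields $|Z(G)|(q-1)q^{d}$.
\end{enumerate}
This is the orbit--stabiliser argument you gesture at, and it would give an elementary proof independent of the stack machinery. As written, though, your proposal does not carry it out, and the normalisation it relies on would not work.
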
\noindent  Related problems of counting wildly ramified local extensions with prescribed ramification have been treated in \cite{Krasner1966, PauliSinclair2017, Monnet2025, Muller2023}.
The above formula may be helpful in the case of global counting.
\subsection{Structure}In Section \ref{sec:modular reps} we collect results on the modular representation theory of groups of the form $G = \ZpZ \rtimes \ZmZ$ over fields of characteristic $p$ needed for our computation of the $\mathbf{v}$-function. 
In Section \ref{sec; moduli space structure} we explicitly describe the structure of the moduli space of $G$-torsors, and prove the result on the number of $G$-extensions with fixed ramification jump.
In Section \ref{sec; v functions} we compute the $\mathbf{v}$-function for the connected and non-connected torsors, proving the full version of Theorem \ref{defn of v function}.
In Section \ref{sec; Stringy motives} we prove the formula for the stringy motive in Theorem \ref{stringy motive thm}, 
and the formula for the stringy Euler number.
We then explicitly calculate the stringy motive for two different representations of $S_3$ over a field of characteristic $3$, illustrating the dependence of the stringy Euler number on the representation. 
Finally, in Section \ref{sec; a and b invs} we prove the formula for the $a$-invariant of our function,
and Theorem \ref{thm; quotient singularities}, 
as well as finding the $a$- and $b$-invariants for both representations of $S_3$.
\subsection{Notation}\label{sec; notation}

In this paper we let $k$ be an algebraically closed field of characteristic $p>0$, and write $K = k((t))$ for the field of formal Laurent power series over $k$. We use the terminology $G$-extension of $K$ to denote a finite extension $L/K$ whose Galois group is isomorphic to $G$.  For a field $F$ of characteristic $p$, we define the Artin-Schreier map \[\wp: F \rightarrow F, \quad x \mapsto x^p - x,\] which is additive under the assumption that $\text{char} F = p$. Furthermore, we fix a choice of $m$-th root of unity $\zeta_m \in k$ that satisfies $\zeta_{m'n}^n = \zeta_{m'}$. We will abbrieviate ``Deligne-Mumford stack" to ``DM stack".

\subsection{Acknowledgements}
 Tavernier's work was partially supported by the JSPS Postdoctoral Fellowship for Research in Japan and partially supported by the EPSRC Doctoral Studentship. Yasuda was supported by JSPS KAKENHI Grant Numbers JP21H04994, JP23K25767, and JP24K00519. The authors would like to thank Linghu Fan and Sophi Marques for helpful discussions.

\section{Representations of wild-by-tame split metacyclic groups}\label{sec:modular reps}

\noindent In this section we collect some results from modular representation theory to describe the indecomposable representations of $G = \ZpZ \rtimes \ZmZ$ over an algebraically closed field of characteristic $p$.
We say that a module $U$ over a ring $R$ is indecomposable if $U \cong V \oplus W$ implies that $V = 0$ or $W = 0$. 
As we are working over a field whose characteristic divides $\# G$,
there exist non-simple indecomposable modules. 
However, to find all the indecomposable $kG$-modules we begin by describing the simple modules.

\begin{lem}\label{lem:simple mods}
    The simple $kG$-modules are exactly the simple $k\ZmZ$-modules.
    In particular there are $m$ simple $kG$-modules.
\end{lem}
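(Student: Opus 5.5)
Write $P = \ZpZ \trianglelefteq G$ for the normal Sylow $p$-subgroup and $H = \ZmZ$ for the complement, so that $G/P \cong H$. The idea is to show that $P$ acts trivially on every simple $kG$-module. Then simple $kG$-modules are the same as simple $k[G/P]$-modules, that is, simple $k\ZmZ$-modules inflated along the projection $G \to G/P \cong \ZmZ$.

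\emph{Step 1: $P$ has nonzero fixed vectors.} Let $U$ be a simple $kG$-module. Since $P$ is a $p$-group and $\operatorname{char} k = p$, the fixed space $U^P$ is nonzero. To see this, take a nonzero $u \in U$ and consider the finite $\FF_p$-span $W$ of the orbit $Pu$. This $W$ is a finite $P$-set of $p$-power order, and $0$ is a fixed point. Since the number of fixed points is congruent to $\#W$ modulo $p$, there is a further nonzero fixed vector. Alternatively, one can note directly that $(\sigma - 1)^p = \sigma^p - 1 = 0$ for a generator $\sigma$ of $P$, so $\sigma - 1$ is nilpotent and has a nonzero kernel.

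\emph{Step 2: $U^P$ is a submodule.} Because $P$ is normal in $G$, the subspace $U^P$ is $G$-stable: if $hu$ with $h \in G$ and $u \in U^P$, then for $x \in P$ we have $x h u = h (h^{-1} x h) u = h u$. By simplicity of $U$, it follows that $U^P = U$. Hence $P$ acts trivially on $U$, and $U$ is the inflation of a $k[G/P]$-module, which must itself be simple. Conversely, the inflation of any simple $k\ZmZ$-module is clearly a simple $kG$-module, since the $G$-submodules and the $H$-submodules coincide. Distinct simple $k\ZmZ$-modules inflate to non-isomorphic $kG$-modules, and this gives the claimed bijection.

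\emph{Step 3: counting.} Since $p \nmid m$ and $k$ is algebraically closed, $k\ZmZ$ is semisimple and commutative, isomorphic to $k^m$ via the characters $1 \mapsto \zeta_m^{s}$ for $s = 0, \dots, m-1$. It therefore has exactly $m$ simple modules, each one-dimensional.

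The only step with any content is Step 1, the existence of $P$-fixed vectors. The nilpotence argument handles it immediately, so I do not expect a genuine obstacle.
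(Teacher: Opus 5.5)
Your proof is correct and follows the same route as the paper: show that the normal Sylow subgroup $\ZpZ$ acts trivially on every simple $kG$-module, so the simples are exactly the inflations of simple $k\ZmZ$-modules, of which there are $m$ because $k\ZmZ$ is semisimple (indeed $\cong k^m$). The only difference is that the paper quotes Webb's Cor.~6.2.2 (the common kernel of all simple modules is the largest normal $p$-subgroup), whereas you prove the inclusion you need directly, using the nilpotence of $\sigma-1$ (or the fixed-point count) together with the $G$-stability of the fixed space of $\ZpZ$.
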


\begin{proof}
By \cite[Cor.~$6.2.2$]{Webb_2016}, the common kernel of all the simple $G$-modules is the (unique) largest normal $p$-subgroup of $G$, which in our case is $\ZpZ$. In particular, the simple $kG$-modules are precisely the simple $k\ZmZ$ modules, made into $kG$-modules via the quotient homomorphism $G \rightarrow G/(\ZpZ)$. Futhermore, $m$ is coprime to the characteristic of $k$, and so $k\ZmZ$ is semisimple and there are exactly $m$ simples.
\end{proof}

\noindent By \cite[Page.~$31$,~Thm.~$3$]{Alperin_1986} there is a one-to-one correspondence between the isomorphism classes of indecomposable projective $kG$-modules and the isomorphism classes of simple $kG$-modules. This correspondence comes from associating to each indecomposable projective $P$ the quotient $P/\rad P$, where $\rad P$ is the radical of $P$, which is the intersection over all maximal submodules of $P$. 

The remaining non-simple, non-projective indecomposables may be found by considering the structure of the Brauer tree. By \cite[Exa.~$5.1.3$]{craven}, the Brauer tree of $kG$ is a star, and so by \cite[Exa.~$5.2.6$]{craven} every indecomposable $kG$-module is a quotient of an indecomposable projective module. In particular, every indecomposable $kG$-module is uniserial and has structure entirely determined by the structure of the projective indecomposables. 

Finally, we calculate the number of indecomposable modules of a group of the form $\ZpZ \rtimes\ZmZ$.
\begin{lem}\label{lem:no_indecomp_kG_modules}
    Let $k$ be a field of prime characteristic $p$ containing all the $m$-th roots of unity and $G = \ZpZ \rtimes\ZmZ $ for some $m \in \NN$ coprime to $p$. Then there are exactly $m \cdot p$ indecomposable $kG$-modules.
\end{lem}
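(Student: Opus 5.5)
I will count the indecomposables through the uniserial structure already noted. By the discussion following Lemma \ref{lem:simple mods}, every indecomposable $kG$-module is a quotient of an indecomposable projective module and is uniserial. So each indecomposable module $U$ is determined by its head $U/\rad U$, which is one of the $m$ simple modules $S_0,\dots,S_{m-1}$ of Lemma \ref{lem:simple mods}, together with its composition length. The plan is to show that each projective indecomposable $P_i$ (the projective cover of $S_i$) is uniserial of length exactly $p$. Then the quotients $P_i/\rad^j P_i$ for $1\le j\le p$ are pairwise non-isomorphic indecomposables, and together they exhaust all indecomposables. This gives $m\cdot p$ modules.

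\textbf{Step 1: the length of $P_i$.} Let $P$ be the normal Sylow subgroup $\ZpZ$ and $H=\ZmZ$ the complement. Since $p\nmid m$, I take $P_i = kG\otimes_{kH} S_i$. This is projective because induction from a $p'$-subgroup preserves projectivity and $S_i$ is projective over the semisimple algebra $kH$. Its dimension is $p\cdot\dim S_i = p$, since $k$ contains the $m$-th roots of unity and so every simple $kH$-module is one-dimensional. Restricted to $P$, it is the regular module $kP\cong k[x]/(x^p)$ with $x=\sigma-1$. As a $kP$-module this is uniserial with radical layers $x^jkP$. These layers are $H$-stable because $P\trianglelefteq G$. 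Hence $\rad^j(P_i)=x^jP_i$, and $P_i$ is uniserial of length $p$ with one-dimensional layers.

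\textbf{Step 2: identify the layers.} The generator $\tau$ of $H$ acts on $\sigma$ by $\tau\sigma\tau^{-1}=\sigma^{a}$. So $x$ maps the layer $S_i$ to a layer $S_i\otimes\chi$, where $\chi$ is the character of $H$ on $P$. Thus the successive composition factors are $S_i, S_i\otimes\chi,\dots,S_i\otimes\chi^{p-1}$. Since $P_i$ is indecomposable with simple top $S_i$ (the top is $P_i/xP_i\cong S_i$), the modules $P_i$ for different $i$ are exactly the $m$ projective indecomposables of \cite[Page.~$31$,~Thm.~$3$]{Alperin_1986}.

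\textbf{Step 3: the count.} Any indecomposable $U$ is a quotient of some $P_i$ with $S_i\cong U/\rad U$. The quotients of a uniserial module are precisely the modules $P_i/\rad^jP_i$ for $1\le j\le p$. Two such modules are isomorphic only if their heads and their dimensions agree, that is, only if both $i$ and $j$ agree. So we obtain exactly $m\cdot p$ isomorphism classes.

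The main obstacle is the claim $\rad^j(P_i)=x^jP_i$, which requires checking that the $kG$-radical agrees with the $kP$-radical here. I would justify it by noting that $\rad(kG)=kG\cdot\rad(kP)$ when $P$ is a normal $p$-subgroup with $p'$ quotient, which follows from Lemma \ref{lem:simple mods}. This identification also supplies the uniseriality directly, so the argument does not depend on the Brauer tree citation.
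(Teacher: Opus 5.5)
Your argument is correct, but it takes a different route from the paper. The paper proves the lemma in one line. It quotes \cite[Cor.~$11.2.2$]{Webb_2016}, which gives $p\cdot l_k(\ZmZ)$ indecomposable $kG$-modules, and then uses semisimplicity of $k\ZmZ$ to get $l_k(\ZmZ)=m$.

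You instead construct the projective indecomposables explicitly as $kG\otimes_{kH}S_i$. You use Mackey's formula to see that they restrict to $kP\cong k[x]/(x^p)$, and you pin down their Loewy series via $J(kG)=kG\,J(kP)$. The inclusion $\supseteq$ holds because $J(kP)$ is the augmentation ideal and annihilates every simple module by Lemma~\ref{lem:simple mods}. The inclusion $\subseteq$ holds because $kG/kG\,J(kP)\cong k\ZmZ$ is semisimple. Counting the $p$ quotients of each of the $m$ uniserial modules $P_i$ then gives $mp$.

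This is more self-contained and gives more than the count. It exhibits the composition factors $S_i,S_i\otimes\chi,\dots,S_i\otimes\chi^{p-1}$. That is exactly the structure behind the relation between the diagonal entries of $\tau$ in Lemma~\ref{lem; matrix of tau}. The paper's citation, by contrast, is shorter and does not need the explicit model of the projectives.

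One correction to your final paragraph. The identification of $J(kG)$ shows that the projective indecomposables are uniserial. It does not by itself show that \emph{every} indecomposable module is a quotient of one of them. Your Step~3, like your opening sentence, still takes that fact from the paper's Brauer-tree discussion, so the argument is not yet independent of that citation.

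To genuinely avoid it, add Nakayama's theorem on serial algebras. If all indecomposable projective left and right modules are uniserial, then every indecomposable module is uniserial. It therefore has simple top and is a quotient of its projective cover. The right projective indecomposables are uniserial here too, because $g\mapsto g^{-1}$ induces $kG\cong kG^{\mathrm{op}}$, so your computation already covers them.
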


\begin{proof}
Denote by $l_k(\ZmZ)$ the number of isomorphism classes of simple $k\ZmZ$ modules. By \cite[Cor.~$11.2.2$]{Webb_2016}, since $G = \ZpZ\rtimes \ZmZ$ and $m$ is coprime to $p$, the number of isomorphism classes of indecomposable $kG$-modules is given by $p\cdot l_k(\ZmZ)$. As $k\ZmZ$ is semisimple, we have $l_k(\ZmZ) = m$.
\end{proof}

\subsubsection{Matrix representations}\label{matrix rep subsec} We will need an explicit matrix representation for the actions of the generators of $\ZpZ$ and $\ZmZ$.
We begin by setting up some notation to be used throught the paper.
Let $\sigma$ be such that $\ZpZ = \langle \sigma \rangle$ and $\tau$ be such that $\ZmZ = \langle \tau \rangle$. 
These must satisfy the relations $\sigma^p = \id$, $\tau^m = \id$ and $\tau \sigma \tau^{-1} =\sigma^{a(G)}$ for some $a(G) \in \left( \ZpZ\right)^{\times}$ which determines the representation. Note that $a(G)$ satisfies $a(G)^m \equiv 1 \bmod p$.
Consider the kernel of the group action, which is the subgroup $\{ g \in \langle \tau \rangle : g \sigma g^{-1} = \sigma\}$ of $\ZmZ$. 
This is exactly the subgroup of elements which act trivially on $\ZpZ$.
Denote by $n^{\dagger}$ the cardinality of this subgroup.
The $a(G)$ appearing in the description of the group action may be identified with an element $a(G) \in \FF_p^{\times}$, and its order is $n = m/n^{\dagger}$.
We will also need the following definition. \begin{defi} \label{def; h invariant}
    Let $\zeta_{p - 1}$ be the chosen root of unity from Section \ref{sec; notation}, and write $p-1 = n n_1$. We set $h'(G) \in \{1, \cdots , p-1 \}$ to be the element such that $a(G) = \zeta_{p-1}^{h'(G)}$. The \emph{invariant} of $G$ is $h(G) = h'(G)/n$.
\end{defi}
\noindent For a $d$-dimensional indecomposable $G$-representation, we choose a suitable basis $x_1,\cdots,x_d$ such that the generator $\sigma$ of $\ZpZ$ is represented by a matrix that is a single Jordan block of size $d$, which is to say a matrix of the form 
\[\begin{pmatrix} 1 & 1 & \cdots & 0 \\
\vdots & 1 & 1 \cdots & 0 \\ \vdots & \vdots& \ddots &  \vdots \\ 0 & 0 & \cdots & 1 \end{pmatrix}_{d \times d}.\] 
It then follows from the fact that $\sigma$ maybe written in this way that the matrix for $\tau$ is upper triangular. 
\begin{lem}\label{lem;tau is upper triangular}
    Let $\sigma$, $\tau$ be as above.
    Then the matrix representing $\tau$ is of the form
    \[\begin{pmatrix} \xi_1 & * & \cdots & * \\
\vdots & \xi_2 & * \cdots & \vdots \\ \vdots & \vdots& \ddots &  * \\ 0 & 0 & \cdots & \xi_d \end{pmatrix}_{d \times d},\] where the $\xi_i$ are $m$-th roots of unity. 
In particular, it is upper triangular.
\end{lem}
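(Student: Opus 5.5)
We use the relation $\tau\sigma\tau^{-1} = \sigma^{a(G)}$, rewritten in terms of the nilpotent operator $N = \sigma - \id$. In the chosen basis $x_1,\dots,x_d$, the matrix $N$ is the single nilpotent Jordan block, with $N x_1 = 0$ and $N x_i = x_{i-1}$. Hence the kernels $\ker N^j = \langle x_1,\dots,x_j\rangle$ form the standard complete flag
\[0 \subset \langle x_1\rangle \subset \langle x_1,x_2\rangle \subset \cdots \subset k^d.\]
A matrix is upper triangular exactly when it preserves this flag. So it suffices to show that $\tau$ preserves each $\ker N^j$.

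To see this, note that $\tau\sigma\tau^{-1} = \sigma^{a}$ with $a = a(G)$ gives
\[\tau N \tau^{-1} = \sigma^{a} - \id = (\id + N)^{a} - \id = aN + \binom{a}{2}N^2 + \cdots = N\,u,\]
where $u = a\,\id + \binom{a}{2}N + \cdots$ is a polynomial in $N$. We take $a$ as a positive integer representative in $\{1,\dots,p-1\}$, so that $a \not\equiv 0 \pmod p$. Then $u$ is invertible, since $u - a\,\id$ is nilpotent, and $u$ commutes with $N$. Therefore
\[\tau N^j \tau^{-1} = N^j u^j,\]
and so
\[\ker(\tau N^j \tau^{-1}) = \ker N^j.\]
The left-hand side equals $\tau(\ker N^j)$, so $\tau(\ker N^j) = \ker N^j$ for every $j$. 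Thus $\tau$ stabilises the flag and its matrix is upper triangular.

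For the diagonal entries $\xi_i$, observe that $\tau^m = \id$. The diagonal of $\tau^m$ is $(\xi_1^m,\dots,\xi_d^m)$, because powers of upper triangular matrices have the powers of the diagonal entries on their diagonal. Hence $\xi_i^m = 1$ for all $i$, so each $\xi_i$ is an $m$-th root of unity in $k$.

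The only step needing care is the invertibility of $u$. Its constant term is $a$, which is nonzero in $k$ because $a(G) \in (\ZpZ)^\times$. Everything else is routine.
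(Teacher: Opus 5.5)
Your proof is correct and follows the paper's route: both show that $\tau$ stabilises the flag $\ker(\sigma-\id)^j$ using $\tau\sigma\tau^{-1}=\sigma^{a(G)}$. Your factorisation $\tau N\tau^{-1}=Nu$, with $u$ an invertible polynomial in $N$, makes explicit the equality $\ker N^j=\ker(\sigma^{a(G)}-\id)^j$, which the paper only asserts. The one difference is on the diagonal: you get $\xi_i^m=1$ directly from $\tau^m=\id$, whereas the paper takes $\xi_1$ to be an $m$-th root of unity and propagates it through the conjugation relation (essentially $\xi_i=\xi_{i-1}a(G)^{-1}$, as in the subsequent lemma on the eigenvalues of $\tau$), so your version is more self-contained.
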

\begin{proof}
    Set $\delta$ to be the operator $\delta := \sigma- \id$.
    It is enough to show that the subspaces $\{\delta^i = 0\}$ are preserved by $\tau$ for every $i$.
    Write $\delta^{a(G)} = (\sigma- \id)^{a(G)}$.
    By assumption, $\tau$ satisfies the equality $ \delta = \tau^{-1} \delta^{a(G)} \tau$.
    Furthermore, for each $i$ we have $\ker(\delta^i) = \ker((\delta^{a(G)})^i)$.
    Thus, $\ker(\delta^i)$ is preserved by $\tau$ for every $i$.
    We set the first diagonal entry to be an $m$-th root of unity $\xi_1$.
    Then every other entry may be determined by $\xi_1$ via the relation $ \delta = \tau^{-1} \delta^{a(G)} \tau$ and in particular,
    they are all $m$-th roots of unity.
\end{proof}
\section{The moduli space of $G$-torsors}\label{sec; moduli space structure}

\subsection{Explicit description of the $G$-actions}

  Let $\sigma$ be such that $\ZpZ =\langle \sigma \rangle$ and $\tau$ be such that $\ZmZ = \langle \tau \rangle$ and $V_{d,s}$ be a $d$-dimensional indecomposable representation of $G$ with coordinate ring $k[x_1,x_2,\cdots ,x_d]$. We may choose these coordinates such that the action of $\sigma$ is given by \[\sigma (x_i) = \begin{cases}
        x_i + x_{i+1} & \text{if} \: i<d, \\
        x_d & \text{if} \: i=d.
    \end{cases}\] In other words, we take the Jordan normal form of $\sigma$ as described in $\S$\ref{matrix rep subsec}.
    
    On the other hand, let $L/K$ be a $G$-extension. There is an intermediate extension $L/F/K$ such that $F = L^{\sigma}$, $F/K$ is a $\ZmZ$-extension generated by $\alpha = t^{\frac{1}{m}}$ and $L/F$ is a $\ZpZ$-extension generated by $\beta$, where $\beta$ satisfies the Artin-Schreier relation $\beta^{p} - \beta +f = 0$. The $G$-extension $L/K$ is generated by $\alpha^{i}\beta^{j}$ for $i = 0,\cdots , m-1$ and $j = 0,\cdots, p-1$. By \cite[Lem.~ $1.4.1(ii)-(iii)$]{Pries2002Families}, we may assume that \[\sigma(\alpha) = \alpha, \quad \sigma(\beta) = \beta + 1, \quad \tau(\alpha) = \zeta_m^{\gamma} \alpha, \quad \tau(\beta) = \zeta_m^{-\gamma r} \beta, \] where $\gamma \in ( \ZZ/m\ZZ)^{\times}$ and $r$ is the ramification jump. We can use the ramification jump and $\gamma$ appearing in the action of $\tau$ to describe the eigenvalues of $\tau$.
    \begin{lem}\label{lem; matrix of tau}
        Let \[ \tau = \begin{pmatrix} \xi_1 & * & \cdots & * \\
\vdots & \xi_2 & * \cdots & \vdots \\ \vdots & \vdots& \ddots &  * \\
0 & 0 & \cdots & \xi_d \end{pmatrix}_{d \times d},\] where $\xi_1 = \zeta_m^s$. Then the roots of unity appearing on the middle diagonal satisfy the relation $\xi_i = \xi_{i - 1}a(G)^{-1}$. In particular the eigenvalues of $\tau$ are \[\zeta_m^s, \zeta_m^{s - \gamma r}, \zeta_m^{s - 2\gamma r}, \cdots, \zeta_m^{s - (d-1)\gamma r}.\] 
    \end{lem}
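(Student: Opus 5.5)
We work with $\delta = \sigma - \id$ and compare diagonal entries in the conjugation relation, then read off the eigenvalues from Lemma \ref{lem;tau is upper triangular}. In the chosen basis $x_1,\dots,x_d$, $\delta$ is the nilpotent matrix with $1$'s on the superdiagonal and zeros elsewhere. By Lemma \ref{lem;tau is upper triangular}, $\tau$ is upper triangular with diagonal $(\xi_1,\dots,\xi_d)$. The group relation $\tau\sigma\tau^{-1} = \sigma^{a(G)}$ gives
\[\tau\delta\tau^{-1} = \sigma^{a(G)} - \id = (\id + \delta)^{a(G)} - \id = a(G)\,\delta + \binom{a(G)}{2}\delta^2 + \cdots.\]
Since all terms $\delta^j$ with $j\ge 2$ vanish on the first superdiagonal, comparing first-superdiagonal entries on both sides will isolate the diagonal entries of $\tau$.

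The key step is the computation of the first superdiagonal of $\tau\delta\tau^{-1}$. If $T$ and $T^{-1}$ are upper triangular with diagonals $\xi_i$ and $\xi_i^{-1}$, then the $(i-1,i)$ entry of $T\delta T^{-1}$ is $\sum_{k,l} T_{i-1,k}\,\delta_{k,l}\,(T^{-1})_{l,i}$. Upper-triangularity forces $k\ge i-1$ and $l\le i$, while $\delta_{k,l}\neq 0$ requires $l = k+1$. Hence $k = i-1$ and $l = i$, and the entry is exactly $\xi_{i-1}\xi_i^{-1}$. Equating this with the right-hand side gives $\xi_{i-1}\xi_i^{-1} = a(G)$, that is, $\xi_i = \xi_{i-1}a(G)^{-1}$. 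Here $a(G)$ is viewed in $k$ through $\FF_p\subset k$; since $a(G)^m = 1$, it is an $m$-th root of unity in $k$.

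To express the eigenvalues in terms of $\zeta_m$, identify $a(G)$ inside $k$ via the explicit Galois-theoretic model of the extension $L/K$. From $\tau(\beta) = \zeta_m^{-\gamma r}\beta$ and $\sigma(\beta) = \beta+1$, we compute $\tau\sigma\tau^{-1}(\beta) = \beta + \zeta_m^{-\gamma r}$. Also $\sigma^{a}(\beta) = \beta + a$, and both automorphisms fix $\alpha$. Comparing gives $a(G) = \zeta_m^{-\gamma r}$ in $k$, under the compatibility with \cite[Lem.~1.4.1]{Pries2002Families} that identifies the group $G$ acting on $V$ with $\Gal(L/K)$.

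I expect this identification to be the main subtle point, together with the sign convention: one must confirm that $a(G)$ equals $\zeta_m^{-\gamma r}$ and not its inverse. Once it is established, $a(G)^{-1} = \zeta_m^{\gamma r}$, so the recursion gives $\xi_i = \zeta_m^{s}\cdot \zeta_m^{-(i-1)\gamma r}$ by induction on $i$, matching the sign in the statement. The eigenvalues of an upper triangular matrix are its diagonal entries, which proves the claim.
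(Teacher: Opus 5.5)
Your overall route is the paper's. You compare the first superdiagonal of the conjugation relation to get $\xi_i = \xi_{i-1}a(G)^{-1}$; your index argument showing $(\tau\delta\tau^{-1})_{i-1,i} = \xi_{i-1}\xi_i^{-1}$ is correct and cleaner than the paper's unspecified ``matrix calculation''. You then express $a(G)$ as a power of $\zeta_m$. The paper simply quotes \cite[Lem.~$1.4.1(iii)$]{Pries2002Families} for $a(G) = \zeta_m^{\gamma r}$. You instead derive it from the explicit Galois action, which would be a nice self-contained replacement, but the derivation gets the sign wrong. Since $\tau$ fixes $k$, we have $\tau^{-1}(\beta) = \zeta_m^{\gamma r}\beta$, so
\[\tau\sigma\tau^{-1}(\beta) = \tau\bigl(\zeta_m^{\gamma r}\beta + \zeta_m^{\gamma r}\bigr) = \beta + \zeta_m^{\gamma r}.\]
Equivalently, $\tau\sigma(\beta) = \zeta_m^{-\gamma r}\beta + 1$ must equal $\sigma^{a(G)}\tau(\beta) = \zeta_m^{-\gamma r}\beta + a(G)\zeta_m^{-\gamma r}$. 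Either way $a(G) = \zeta_m^{\gamma r}$, not $\zeta_m^{-\gamma r}$. What you actually computed was $\tau^{-1}\sigma\tau(\beta)$.

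Your final step then contains a second, compensating slip. From your own values $a(G)^{-1} = \zeta_m^{\gamma r}$ and $\xi_i = \xi_{i-1}a(G)^{-1}$, induction gives $\xi_i = \zeta_m^{s+(i-1)\gamma r}$, not $\zeta_m^{s-(i-1)\gamma r}$. So the stated eigenvalues come out only because two sign errors cancel. As written the argument is not valid, and it fails exactly at the point you flagged as delicate.

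The repair is immediate. With the correct value $a(G) = \zeta_m^{\gamma r}$ you get $a(G)^{-1} = \zeta_m^{-\gamma r}$. The recursion then gives $\xi_i = \zeta_m^{s-(i-1)\gamma r}$, as claimed and in agreement with the paper.
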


    \begin{proof}
       We know that $\tau$ and $\sigma$ satisfy $\sigma = \tau^{-1}\sigma^{a(G)}\tau$ where 
\[ \sigma^{a(G)} =  \begin{pmatrix} 1 & a(G) & \cdots & * \\
\vdots & 1 & a(G) \cdots & \vdots \\ \vdots & \vdots& \ddots &  a(G) \\
0 & 0 & \cdots & 1 \end{pmatrix}_{d \times d}.\] It follows from a matrix calculation that the diagonal entries of $\tau$ satisfy $\xi_i = \xi_{i-1}a(G)^{-1}$. In particular they are of the form \[\xi_2 = \xi_1a(G)^{-1}, \xi_3 = \xi_1a(G)^{-2}, \cdots, \xi_d a(G)^{-(d-1)}.\]By \cite[Lem,~$1.4.1(iii)$]{Pries2002Families} we have $a(G) = \zeta_{m}^{\gamma r}$ for the chosen root of unity $\zeta_m$, and since $\xi_1$ is given by $\xi_1 = \zeta_m^s$, we have that the eigenvalues of $\tau$ are \[\zeta_m^s, \zeta_m^{s - \gamma r}, \zeta_m^{s - 2\gamma r}, \cdots, \zeta_m^{s - (d-1)\gamma r}.\]
    \end{proof} \noindent If $\gamma \in \ZmZ$ is such that $\gcd(\gamma, m) = g >1$, we may write $\gamma = \gamma^{\dagger} \cdot g$ and $m = m_{\gamma}\cdot g$. By the choice of root of unity, we have the action of $\tau$ on $ \alpha$ is given by \[\tau(\alpha) = \zeta_m^{\gamma}\alpha = \zeta_{m_{\gamma}\cdot g}^{\gamma^{\dagger} \cdot g}\alpha = \zeta_{m_{\gamma}}^{\gamma^{\dagger}}\alpha,\] and $\gamma^{\dagger}$ and $m_{\gamma}$ are coprime. \begin{defi}
   A \emph{$G$-\'etale $K$-algebra} is a finite \'etale $K$-algebra $M$ of degree $\# G$ that is endowed with a $G$-action which satisfies that $M^{G} = \mathcal{O}_K$.
   Denote by $G\et(K)$ the set of isomorphism classes of $G$-\'etale $K$-algebras. 
   We can write any $M\in G\et(K)$ as $M = F^{\oplus n}$ for a $H$-extension $F/K$, where $H$ is a subgroup of $G$.  
\end{defi}
    \begin{nota}
          For a $k$-algebra $M$ with a $G$-action, let $\delta = \sigma - \id_M$, and for any integer $n$ write $M^{\delta^n=0}$ to mean the kernel of the $k$-linear operator $\delta^{n}:M \rightarrow M$.
          We will set $\delta^0 := \id$.
    \end{nota}
  \noindent We can describe the $G$-action on the coordinates using $\delta$ as follows: \[\delta(x_i) = \begin{cases}
        x_{i+1} & \text{if} \: i<d, \\
        0 & \text{if} \: i=d.
    \end{cases}\] 
\noindent It can be helpful to work with $\delta$ rather than $\sigma$ when studying the $G$-action.
\begin{prop}\label{prop:basis for L}
    For any $1 \leq j \leq d-1$ and $0 \neq h \in k((t))$ we have that $\delta^j(\alpha^i \beta^j h) \neq 0$ and $\delta^{j+1}(\alpha^i \beta^j h) = 0$. 
    Hence we obtain the following basis for $L^{\delta^{d}= 0}$: \[L^{\delta^{d}= 0} = \bigoplus_{i = 0 }^{m-1} \bigoplus_{j = 0}^{d-1}  k((t)) \cdot \alpha^i \beta^j. \]
\end{prop}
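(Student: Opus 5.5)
We use that $\alpha$ and every $h\in k((t))$ are fixed by $\sigma$, so $\delta$ is $K(\alpha)$-linear on $L$, together with a binomial computation on powers of $\beta$. Since $\sigma(\beta)=\beta+1$, we have
\[\delta(\beta^j)=(\beta+1)^j-\beta^j=\sum_{l<j}\binom{j}{l}\beta^l .\]
Thus $\delta$ lowers the $\beta$-degree of a polynomial in $\beta$ with coefficients in $F=K(\alpha)$ by exactly one, provided the leading coefficient $j\,c$ is nonzero. For $j\le d-1\le p-1$ (note $d\le p$ for indecomposables, since $\sigma$ is a Jordan block of order $p$), $j$ is invertible in $k$.

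By induction on $j$, $\delta^j(\beta^j)=j!\neq 0$ in $k$, because $j<p$. Hence $\delta^{j+1}(\beta^j)=\delta(j!)=0$. Multiplying by $\alpha^i h\neq 0$, which commutes with $\delta$, gives $\delta^j(\alpha^i\beta^jh)=j!\,\alpha^ih\neq 0$ and $\delta^{j+1}(\alpha^i\beta^jh)=0$. This proves the first assertion.

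For the basis, recall that $\{\alpha^i\beta^j : 0\le i<m,\ 0\le j<p\}$ is a $K$-basis of $L$, as stated when the generators were introduced. Equivalently, $L=\bigoplus_{j=0}^{p-1}F\beta^j$ with $F=\bigoplus_i K\alpha^i$. Take $x=\sum_{j=0}^{p-1}c_j\beta^j$ with $c_j\in F$, and let $J$ be the largest index with $c_J\neq0$. The degree-lowering property gives $\delta^J(x)=J!\,c_J\neq 0$. So $\delta^d(x)=0$ forces $J\le d-1$.

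Conversely, each $\alpha^i\beta^jh$ with $j\le d-1$ is killed by $\delta^{j+1}$, hence by $\delta^d$. Linear independence of the sum is inherited from the $K$-basis of $L$. This gives $L^{\delta^d=0}=\bigoplus_{i=0}^{m-1}\bigoplus_{j=0}^{d-1}K\alpha^i\beta^j$.

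The only delicate point is the bound $d\le p$, which ensures $j!\neq 0$ in characteristic $p$. It follows from the fact that an indecomposable representation has $\sigma$ acting by a single Jordan block with $\sigma^p=\id$, so $(\sigma-\id)^p=0$ and the block size is at most $p$. Everything else is a routine check.
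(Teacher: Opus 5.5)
Your proof is correct and follows the same route as the paper: an induction on $j$ driven by the binomial expansion $\delta(\beta^{j})=\sum_{l<j}\binom{j}{l}\beta^{l}$, with $\alpha^{i}h$ pulled out of $\delta$ because $\sigma$ fixes it. You are more careful than the paper's write-up in three places: you compute the constant $\delta^{j}(\beta^{j})=j!$ explicitly, you make clear that its nonvanishing needs $j<p$ (hence $d\le p$), and you prove the inclusion $L^{\delta^{d}=0}\subseteq\bigoplus_{i<m,\,j<d}k((t))\,\alpha^{i}\beta^{j}$ by the top-degree argument, which the paper only asserts with ``hence''.
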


\begin{proof}
    We proceed by induction on $j$. When $j = 1$ we clearly have  \[\delta^{1}(\alpha^i h \beta^{1}) = \alpha^i h(\beta + 1) - \alpha^i h\beta = \alpha^i h \] and \[\delta^{2}(\alpha^i h \beta^{1}) = \delta(\alpha^i h) = 0. \] Assume this holds for some $j = n$. Then \begin{align*}
         \delta^{n+1}(\alpha^i h \beta^{n+1}) = & \delta^{n}(\sigma(\alpha^i h \beta^{n+1}) - \id(\alpha^i h \beta^{n+1})\\
        = & \delta^{n} \left( \alpha^i h \sum_{\ell =0}^{n+1} \binom{n+1}{\ell} \beta^{n+1-\ell} - \alpha^i h \beta^{n+1} \right) \\
        = & \delta^{n} \left(  \alpha^i h \sum_{\ell =1}^{n+1} \binom{n+1}{\ell} \beta^{n+1- \ell} \right) \neq 0,
    \end{align*} and \[\delta^{n+2}(\alpha^i h \beta^{n+1}) =0. \]
\end{proof}

\begin{cor}\label{basis for OL}
    Let $\mathcal{O}_L$ be the integer ring of $L$ and $v_L$ be the normalised valuation on $\mathcal{O}_L$. Then we have \[\mathcal{O}_L^{\delta^{d}= 0} = \bigoplus_{i = 0}^{m-1} \bigoplus_{j = 0}^{d-1}k[[t]]\cdot \alpha^i \beta^j t^{n_{i,j}} \] where \[ n_{i,j} = \left\lceil \frac{-iv_{L}(\alpha) - jv_{L}(\beta)}{e_{L/K}} \right\rceil,\] and $e_{L/K}$ is the ramification index of $L/K$.
\end{cor}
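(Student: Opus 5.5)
Starting from Proposition \ref{prop:basis for L}, I will show that the integral elements of $L^{\delta^d=0}$ split coordinatewise along the basis $\alpha^i\beta^j$; the exponent $n_{i,j}$ then comes out of a valuation computation.

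\textbf{Step 1: distinct valuations.} Write an element of $L^{\delta^d=0}$ as $x = \sum_{i,j} h_{i,j}\alpha^i\beta^j$ with $h_{i,j}\in k((t))$, using Proposition \ref{prop:basis for L}. I claim the valuations $v_L(h_{i,j}\alpha^i\beta^j)$, for $0\le i\le m-1$, $0\le j\le d-1$ and $h_{i,j}\neq 0$, are pairwise distinct modulo $e_{L/K}=mp$. We have $v_L(\alpha)=p$, since $\alpha=t^{1/m}$ and $v_L(t)=mp$. We also have $v_L(\beta)=-m r$, where $r$ is the jump: $\beta$ is an Artin--Schreier generator over $F$ with $v_F(f)=-r$ and $p\nmid r$. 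Hence
\[ v_L(h_{i,j}\alpha^i\beta^j)\equiv ip - jmr \pmod{mp}. \]
Suppose two such terms agree modulo $mp$.
\begin{itemize}
\item Reducing mod $p$ gives $jmr\equiv j'mr \pmod p$. Since $p\nmid mr$, this forces $j\equiv j' \pmod p$, so $j=j'$ because $d\le p$.
\item Then reducing mod $m$ gives $ip\equiv i'p \pmod m$, so $i=i'$ since $\gcd(p,m)=1$.
\end{itemize}
This gives the claim.

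\textbf{Step 2: integrality splits coordinatewise.} By the ultrametric inequality with distinct valuations,
\[ v_L(x)=\min_{i,j} v_L(h_{i,j}\alpha^i\beta^j). \]
So $x\in\mathcal{O}_L$ if and only if each term $h_{i,j}\alpha^i\beta^j$ is integral. This proves
\[ \mathcal{O}_L^{\delta^d=0}=\bigoplus_{i,j}\{h\in k((t)) : h\alpha^i\beta^j\in\mathcal{O}_L\}\,\alpha^i\beta^j. \]

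\textbf{Step 3: the exponent.} For $h=t^n u$ with $u\in k[[t]]^\times$, the condition $h\alpha^i\beta^j\in\mathcal{O}_L$ reads
\[ n\,e_{L/K}+iv_L(\alpha)+jv_L(\beta)\ge 0, \quad\text{i.e.}\quad n\ge \frac{-iv_L(\alpha)-jv_L(\beta)}{e_{L/K}}. \]
The smallest such $n$ is $n_{i,j}$, so each summand is $k[[t]]\,t^{n_{i,j}}\alpha^i\beta^j$, as claimed.

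\textbf{Main obstacle.} The delicate part is Step 1. It needs the valuations $v_L(\alpha)=p$ and $v_L(\beta)=-mr$ with $p\nmid r$, which comes from the standard form of Artin--Schreier generators used in \cite{Pries2002Families}. It also needs $d\le p$, which holds because indecomposables are quotients of projectives of dimension at most $p$. If $L/K$ is a non-connected algebra, the same argument applies componentwise.
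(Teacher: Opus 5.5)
Your proof follows the paper's route. The paper also argues that the monomials $\alpha^i\beta^j$ have pairwise distinct valuations and then intersects the basis of Proposition~\ref{prop:basis for L} with $\mathcal{O}_L$ term by term. The paper writes ``mod $p$'' there, but your modulus $e_{L/K}=mp$ is the one the argument actually needs.

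One slip: from your own premise $v_F(f)=-r$ and $e_{L/F}=p$ you get $p\,v_L(\beta)=v_L(f)=-pr$, so $v_L(\beta)=-r$, not $-mr$. Step~1 still goes through unchanged with $ip-jr$ in place of $ip-jmr$, because it only uses $v_L(\alpha)=p$ and $p\nmid v_L(\beta)$.
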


\begin{proof}
    By definition, we have $v_L(\alpha^i\beta^j)\not\equiv v_L(\alpha^{i'}\beta^{j'}) \bmod p$ for all $(i,j) \neq (i',j')$ where $(i,j)$ runs through $\{0,1,\cdots, m-1\} \times \{0,1,\cdots, d-1\}$. Then the result follows from intersecting the basis from Proposition \ref{prop:basis for L} with $\mathcal{O}_L$.
\end{proof}

\subsection{Moduli space of $G$-torsors}\label{moduli space sec}

To calculate the stringy motive, we must integrate over the space $\Delta_G$.
T is the moduli space of $G$-torsors over the formal punctured disk $\spec k((t))$.
It is the space that the $\mathbf{v}$-function is defined on, and it is infinite-dimensional.
In order to compute the motivic integral, we must gain an understanding of the structure of this space. 
This may be done by decomposing it into a disjoint union of countably many subspaces of finite dimension.
We first decompose it into spaces of wild and tame torsors, and further into the connected and non-connected torsors. 
Let \[\Delta_G =  \Delta_G^{\tame} \sqcup \Delta_G^{\wild},\] where $\Delta_G^{\tame}$ is the moduli space of $G$-torsors whose wild part is unramified, and $ \Delta_G^{\wild}$ is the moduli space of $G$-torsors such that the wild part is ramified.
The space $\Delta_G^{\tame}$ is zero-dimensional as these torsors are tamely ramified.
They correspond to the sets of $G$-\'etale $K$-algebras \begin{align*}D^{\tame} = & \{L \cong Q^{p}:Q \:\text{an \'etale $\ZmZ$-algebra}\} \\
  = &  \bigcup_{q \mid m}\{L \cong Q^{\oplus\frac{mp}{q}}: Q/K \: \text{a $\ZZ/q\ZZ$-extension}\}\end{align*}
  where the union is over  divisors $q$ of $m$. 
  $\Delta_G^{\tame}$ also contains the fully unramified torsors, corresponding to the set of unramified $G$-algebras
  \[D^{\unr} = \{L \cong k((t))^{mp} \}.\] 
  However, the space $\Delta_G^{\wild}$ is infinite-dimensional. 
  We stratify it using the \emph{ramification jump}. 
  \begin{defi}
    Let $v_L$ be the natural valuation on $L$ and $\pi$ a uniformiser.
    The $i$-th ramification group (in lower numbering) is the normal subgroup of all $g \in G$ such that $v(\pi(g) - \pi) \geq i + 1$. 
    For $ 1 \neq g \in G$, the (lower) ramification jump is the non-negative integer $r$ such that $v(g(\pi) - \pi) = r + 1$.
\end{defi}
\noindent The space $\Delta_G^{\wild}$ contains both the space of connected torsors, 
and some non-connected torsors.
The connected torsors correspond to $G$-extensions of $K$, which is
the set of algebras
\[D^{\text{field}} = \{ L/K \: \text{a $\ZpZ \rtimes \ZmZ$-extension}\}.\]
  The following result from \cite[Lem.~$1.4.1$]{Pries2002Families} describes the properties of the ramification jump of $G$-extensions of this form. 
  \begin{lem}\label{Pries lem}
    Let $G = \ZpZ \rtimes \ZmZ$, and $\sigma$ and $\tau$ be the generators of $\ZpZ$ and $\ZmZ$ respectively, 
    and $a(G) \in (\ZpZ)^{\times}$ be such that $\tau \sigma \tau^{-1} =\sigma^{a(G)}$.
    If $r$ denotes the ramification jump then the following hold: \begin{enumerate}
        \item The ramification jump satisfies $\gcd(r,p) = 1$.
        \item The possible degrees of the terms appearing in the Laurent series $f = \beta^p - \beta$ are all congruent to $r$ modulo $m$.
        \item We have that
        $n^{\dagger} = \# \{ g \in \langle \tau \rangle : g \sigma g^{-1} = \sigma\}$
        satisfies $n^{\dagger} = \gcd(r,m)$ and furthermore, we have \begin{equation}\label{congruence condition}
            r \equiv \gamma^{-1} h(G) n^{\dagger} \pmod m,
        \end{equation} where 
        $\gamma \in (\ZmZ)^{\times}$ 
        is given by the action of $\tau$ on the generators $\alpha$ and $\beta$ of the $G$-extension and $h(G)$ is the invariant from Definition \ref{def; h invariant}.
    \end{enumerate}
\end{lem}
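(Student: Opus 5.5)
Since Lemma \ref{Pries lem} is quoted from \cite[Lem.~$1.4.1$]{Pries2002Families}, I will not reprove it from scratch. The plan is to recall the structure of the argument so the statement can be checked against the normalisations fixed above. I work with the tower $L/F/K$, where $F=K(\alpha)$ with $\alpha=t^{1/m}$ and $L=F(\beta)$ with $\beta^p-\beta+f=0$.

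\emph{Part (1).} By the standard Artin--Schreier reduction, $f$ may be replaced by $f+\wp(g)$ for any $g\in F$. This lets me kill every term of $f$ of negative $\alpha$-degree divisible by $p$, and also the non-negative part. So $f$ has a pole of order $r$ with $p\nmid r$. Moreover $v_L(\beta)=-r$, and $v_L(\sigma(\pi)-\pi)=r+1$ for a suitable uniformiser $\pi$ built from $\alpha$ and $\beta$. Since $L/F$ is totally ramified and $F/K$ is tame, the lower jump of $\sigma$ in $G$ is the same as in $L/F$. This gives (1).

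\emph{Part (2).} I use the $\tau$-action. Normality of $\langle\sigma\rangle$ forces $\tau(\beta)$ to generate the same Artin--Schreier extension on which $\sigma$ acts by $\sigma^{a(G)}$. Concretely, $\tau(\beta)=c\beta+g$ with $c\in\FF_p^\times$ determined by $a(G)$. After a further change of $\beta$ (averaging over $\langle\tau\rangle$, which is possible because $p\nmid m$), I can take $\tau(\beta)=c\beta$. Applying $\tau$ to the defining equation gives $\tau(f)=cf$. Writing $f=\sum_e f_e\alpha^{-e}$ with $\tau(\alpha)=\zeta_m^\gamma\alpha$, this says $\zeta_m^{-\gamma e}=c$ for every exponent $e$ that occurs. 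Hence all such $e$ are congruent modulo $m$ to the leading exponent $r$, which is (2). It also identifies $c=\zeta_m^{-\gamma r}$, matching the action of $\tau$ recorded before Lemma \ref{lem; matrix of tau}.

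\emph{Part (3).} An element $\tau^j$ commutes with $\sigma$ exactly when $\tau^j(\beta)-\beta\in\FF_p$ behaves like $\sigma$, that is, when $c^j=1$. This happens iff $\zeta_m^{\gamma r j}=1$, i.e.\ iff $m\mid \gamma r j$. Since $\gamma$ is a unit, this means $m/\gcd(r,m)\mid j$. Thus the centraliser has order $\gcd(r,m)$, so $n^\dagger=\gcd(r,m)$. Finally, comparing $c=\zeta_m^{-\gamma r}$ (an element of $\FF_p^\times$) with $a(G)^{\pm1}=\zeta_{p-1}^{\pm h'(G)}$ is the step I expect to be the main obstacle. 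It depends on the compatible root-of-unity convention $\zeta_{m'n}^n=\zeta_{m'}$ and on the sign convention for $a(G)$. The approach is to write $\zeta_{p-1}^{h'(G)}=\zeta_n^{h(G)}$ and $\zeta_m^{\gamma r}=\zeta_{m/n^\dagger}^{\gamma r/n^\dagger}$, with $n=m/n^\dagger$. Equating exponents in $\ZZ/n$ then gives $\gamma r/n^\dagger\equiv h(G)\pmod n$. Multiplying through by $n^\dagger$ yields $r\equiv\gamma^{-1}h(G)n^\dagger\pmod m$, which is \eqref{congruence condition}, possibly up to a sign to be fixed by the convention in \cite{Pries2002Families}.
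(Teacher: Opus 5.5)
The paper gives no proof of this lemma; it quotes \cite[Lem.~$1.4.1$]{Pries2002Families} directly. Deferring to that source is therefore exactly the paper's route. Your sketch goes further and reconstructs the standard argument the paper omits:
\begin{itemize}
\item Artin--Schreier reduction in $L/F$, plus the fact that lower numbering restricts to the subgroup $\langle\sigma\rangle$, gives (1). Tameness of $F/K$ plays no role there.
\item Normalising $\beta$ as a $\tau$-eigenvector gives (2).
\item Reading off the centraliser and the congruence from $\tau(\beta)=c\beta$ gives (3).
\end{itemize}
The reconstruction is sound, and it pins down a few things the citation hides.

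\emph{There is no sign left to fix.} Apply $\tau\sigma\tau^{-1}=\sigma^{a(G)}$ to $\tau(\beta)=c\beta+g$. This gives $\sigma(\tau\beta)=\tau\beta+a(G)^{-1}$, hence $c\,a(G)=1$ and $a(G)=\zeta_m^{\gamma r}$. That is precisely the normalisation the paper later uses in Lemma~\ref{lem; matrix of tau}. It also gives (3) more cleanly than your ``behaves like $\sigma$'' phrasing: $\tau^j$ centralises $\sigma$ if and only if $a(G)^j=1$.

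\emph{The definition of $h(G)$ needs care.} Your step $\zeta_{p-1}^{h'(G)}=\zeta_n^{h(G)}$ requires $h(G)=h'(G)/n_1$, where $p-1=nn_1$. Since $a(G)$ has order $n$, $h'(G)$ is a multiple of $n_1$, not of $n$. Definition~\ref{def; h invariant} as written divides by $n$, which does not give an integer already for $S_3$ in characteristic $3$ (there $h'(G)=1$, $n=2$). With $n_1$ in place of $n$, your computation yields \eqref{congruence condition} exactly.

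\emph{Averaging in (2) is not needed.} A priori, averaging $\beta$ over $\langle\tau\rangle$ could destroy the reduced form of $f$, and you need its leading pole order to be $r$. You can avoid averaging altogether. Once $\sigma(\beta)=\beta+1$ is fixed, the reduced $f$ (a polynomial in $\alpha^{-1}$ with no exponent divisible by $p$) is unique. Now $a(G)\tau(\beta)$ is another generator with $\sigma$ acting by $+1$, and its reduced Artin--Schreier function is $a(G)\tau(f)$, using $a(G)\in\FF_p$. Uniqueness then forces $\tau(f)=a(G)^{-1}f$, which is what (2) needs.
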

\noindent For every $\gamma \in (\ZmZ)^{\times}$, we write $\ZZ_{\gamma}$ for the set 
\[\ZZ_{\gamma} = \{e \in \ZZ : p \nmid e>0, e \: \text{satisfies the congruence} \: e \equiv \gamma^{-1} h(G) n^{\dagger} \pmod m\}\] 
where $h(G)$ and $n^{\dagger}$ are as in Lemma \ref{Pries lem}.
Note that the congruence condition defining $\ZZ_{\gamma}$ is equivalent to asking that $e \equiv r \pmod m$ by Lemma \ref{Pries lem}$(3)$.
Then we let $E(G,r,\gamma)$ be the set
\[E(G,r,\gamma) = \{e \in \ZZ_{\gamma} : 1 \leq e \leq r\}.\] 
The congruence condition $r \equiv \gamma^{-1} h(G) n^{\dagger} \pmod m$ is a condition on both $r$ and $\gamma \in (\ZmZ)^{\times}$.
\begin{prop}\label{prop: dim of moduli space}
   Let $\gamma \in (\ZmZ)^{\times}$. There exists a scheme $\Delta_{G,\gamma}(r)$ such that the $k$-points of $\Delta_{G,\gamma}(r)$ are in bijection with the isomorphism classes of $\ZpZ \rtimes \ZmZ$-extensions with ramification jump $r$. Furthermore, for each $r \in \ZZ_{\gamma}$, the dimension of $\Delta_{G,\gamma}(r)$ is given by
    \begin{equation}\label{eq; dim of moduli space}
        \# E(G,r,\gamma) = \left\lfloor \frac{r - 1}{m} \right\rfloor + 1 - \left\lfloor \frac{\left\lfloor \frac{r - 1}{m} \right\rfloor + 1}{p}\right\rfloor.
    \end{equation}
\end{prop}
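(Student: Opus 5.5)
The plan is to parametrise $G$-extensions with jump $r$ by their Artin--Schreier data over the tame subextension $F = K(\alpha)$, $\alpha = t^{1/m}$, and read off an affine-space-like parameter space whose coordinates are indexed by $E(G,r,\gamma)$. By the setup preceding Lemma \ref{Pries lem}, every such $L$ is $F(\beta)$ with $\beta^p - \beta = f$, $f \in F = k((\alpha))$. Artin--Schreier theory lets us change $f$ by elements of $\wp(F)$ and by scaling with $\FF_p^\times$. The standard reduction removes all terms of nonnegative valuation, since these lie in $\wp(F)$ over an algebraically closed $k$. It also removes all terms $\alpha^{-e}$ with $p \mid e$, since $c\alpha^{-pe'} \equiv c^{1/p}\alpha^{-e'} \bmod \wp(F)$. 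This leaves a reduced representative
\[ f = \sum_{\substack{1 \le e \le r\\ p\nmid e}} c_e \alpha^{-e}, \qquad c_r \neq 0, \]
with $r = -v_F(f)$ the ramification jump, and this reduced form is unique up to the finite $\FF_p^\times$ ambiguity.

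Next I impose the $\tau$-compatibility. Lemma \ref{Pries lem}(2) says the exponents appearing in $f$ are all congruent to $r$ modulo $m$. Concretely, $\tau(\alpha)=\zeta_m^\gamma\alpha$ and $\tau(\beta)=\zeta_m^{-\gamma r}\beta$ force $\tau(f)=\zeta_m^{-\gamma r}f$, which kills every $c_e$ with $e \not\equiv r \pmod m$. Together with $p\nmid e$ and $1\le e\le r$, the surviving indices are exactly $E(G,r,\gamma)$, using that the congruence defining $\ZZ_\gamma$ is equivalent to $e\equiv r \pmod m$. I would then define $\Delta_{G,\gamma}(r)$ as the quotient of
\[ \{(c_e)_{e\in E(G,r,\gamma)} : c_r\neq 0\} \cong \mathbb{G}_m \times \AAA^{\#E(G,r,\gamma)-1} \]
by the finite group acting through the residual $\FF_p^\times$-scalings and the possible changes of the choice of $\alpha$ and $\beta$ (Pries' normalisation). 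A finite quotient does not change dimension, so $\dim \Delta_{G,\gamma}(r)=\#E(G,r,\gamma)$. The bijection on $k$-points with isomorphism classes of $G$-extensions of jump $r$ follows from uniqueness of the reduced form modulo these identifications. Conversely, any such $f$ defines a $G$-extension with the prescribed $\tau$-action, which I would verify directly from the formulas for $\tau(\alpha)$ and $\tau(\beta)$.

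Finally I count. The integers $e$ with $1\le e\le r$ and $e\equiv r \pmod m$ are $r, r-m, \dots$, and there are $N=\lfloor (r-1)/m\rfloor+1$ of them. Write them as $e_k = r - km$ for $0 \le k \le N-1$. Since $\gcd(m,p)=1$ and $p\nmid r$, the residues of $e_k$ modulo $p$ cycle with period $p$ in $k$ and never vanish at $k=0$. The number of $k$ in $[0,N-1]$ with $p\mid e_k$ is therefore $\lfloor N/p\rfloor$, with a boundary check when $p \mid N$ or the first multiple falls near the end. Subtracting gives formula \eqref{eq; dim of moduli space}.

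I expect the main obstacle to be the middle step: proving rigorously that the isomorphism classes of $G$-extensions correspond to the reduced $f$ modulo only a finite group. This means controlling the interaction of the $\wp$-reduction with the $\tau$-equivariance, and showing that automorphisms of $G$ do not enlarge the identification beyond a finite action. For that I would lean on \cite[Lem.~1.4.1]{Pries2002Families}.
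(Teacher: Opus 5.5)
Your parametrisation step is fine. It is in substance what the paper does later in Lemma~\ref{lem; correspondence}; the paper's proof of this proposition just cites Obus for existence and then counts $E(G,r,\gamma)$. So you do correctly get $\dim\Delta_{G,\gamma}(r)=\#E(G,r,\gamma)$.

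The gap is the final count, and no ``boundary check'' can close it. Put $N=\lfloor (r-1)/m\rfloor+1$, and let $k_0\in\{1,\dots,p-1\}$ satisfy $k_0\equiv rm^{-1}\pmod p$; it is nonzero because $p\nmid r$. Then the number of $k\in[0,N-1]$ with $p\mid r-km$ is $\lfloor (N+p-1-k_0)/p\rfloor$. This equals $\lfloor N/p\rfloor$ exactly when $N \bmod p\le k_0$, and equals $\lfloor N/p\rfloor+1$ otherwise. Knowing that the $k=0$ term is not a multiple of $p$ only excludes one position in the final incomplete block.

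Nothing forces the good case. Writing $r=r_0+(N-1)m$ with $1\le r_0\le m$ gives $k_0\equiv N-1+r_0m^{-1}\pmod p$, which can be smaller than $N \bmod p$.
\begin{itemize}
\item Take $p=5$, $m=2$ (the dihedral group of order $10$) and $r=7$. Then $E(G,7,\gamma)=\{1,3,7\}$: the reduced $f$ is $c_7\alpha^{-7}+c_3\alpha^{-3}+c_1\alpha^{-1}$, with the $\alpha^{-5}$ term absorbed into $\alpha^{-1}$. So the dimension is $3$, while the right-hand side of \eqref{eq; dim of moduli space} is $4-\lfloor 4/5\rfloor=4$.
\item Likewise $p=5$, $m=4$, $r=9$ gives $\#E=2$ against $3$.
\end{itemize}

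So the displayed closed formula is not true in general. The paper's own proof rests on exactly the same unjustified assertion: ``there are $\lfloor(\lfloor\frac{r-1}{m}\rfloor+1)/p\rfloor$ such values of $t$''. What your argument (and the paper's) actually establishes is $\dim\Delta_{G,\gamma}(r)=\#E(G,r,\gamma)=N-\lfloor (N+p-1-k_0)/p\rfloor$.

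This agrees with \eqref{eq; dim of moduli space} only in special situations. One is $m\mid r$, where $k_0\equiv N$; this includes $m=1$. Another is $(m,p)=(2,3)$, which is why the $S_3$ computations later in the paper are unaffected. The periodicity $\#E(G,r+mpn,\gamma)=\#E(G,r,\gamma)+n(p-1)$ behind Lemma~\ref{lem: change of var dim} remains valid for the correct count.
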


\begin{proof}
    The first part follows from \cite[Lem.~$5.5$]{OBUS2010565} and \cite[Thm.~$5.6$]{OBUS2010565}. To calculate the dimension, we proceed as follows. We count the integers $1 \leq e \leq r$ of the form $e = r - mt$ for $t \in \ZZ_{\geq 0}$. As $e \geq 1$, $t$ must satisfy $t \leq \frac{r - 1}{m}$. In particular, $t$ takes values in $0,1, \cdots, \left\lfloor \frac{r - 1}{m} \right\rfloor$, and thus there are $\left\lfloor \frac{r - 1}{m} \right\rfloor + 1$ possible values for $e$ satisfying the congruence condition. Then we must exclude the $e$ that are divisible by $p$. These correspond to integers $t \equiv m^{-1}r \pmod m$. There are \[\left\lfloor \frac{\left\lfloor \frac{r - 1}{m} \right\rfloor + 1}{p}\right\rfloor \] such values of $t$. Therefore it follows that \[\# E(G,r,\gamma ) = \left\lfloor \frac{r - 1}{m} \right\rfloor + 1 - \left\lfloor \frac{\left\lfloor \frac{r - 1}{m} \right\rfloor + 1}{p}\right\rfloor,\] for each $\gamma \in (\ZmZ)^{\times}$
\end{proof}\noindent Then for every $\gamma \in (\ZmZ)^{\times}$ and ramification jump $r$, $\Delta_{G,\gamma}$ admits the stratification 
\[\Delta_{G,\gamma} =\bigsqcup_{r \in  \ZZ_{\gamma}}\Delta_{G,\gamma}(r).\] 
\noindent The space $\Delta_G^{\wild}$ also contains some non-connected torsors.
These non-connected torsors correspond to the case where the $\gamma$ appearing in the action of $\tau$ is not invertible mod $m$.
In this case, Lemma \ref{Pries lem} does not hold as stated.
However, it is possible to reduce to the connected case.
Let $m_{\gamma} = \frac{m}{\gcd (\gamma, m)}$.
The map $\Gamma_k \rightarrow G$ factors through \[\Gamma_k \twoheadrightarrow \ZpZ \rtimes \ZZ/m_{\gamma}\ZZ \hookrightarrow \ZpZ \rtimes \ZmZ. \] 
For every $\gamma$, we will denote by $\Delta_{G, \gamma}$ the moduli space of $G$-torsors corresponding to the set of algebras
\[D_{\gamma} = \{L \cong E^{\gcd(\gamma, m)}: E/K \:\text{a $\ZpZ \rtimes \ZZ/m_{\gamma}\ZZ$-extension} \}.\]
When $\gamma \in (\ZmZ)^{\times}$, this is just the set of algebras $D^{\text{field}}$.
Let $\tau_{\gamma}$ be such that $\ZZ/m_{\gamma}\ZZ = \langle \tau_{\gamma} \rangle $, and $\alpha_{\gamma}$ generate the $\ZZ/m_{\gamma}\ZZ$-part of the $\ZpZ \rtimes \ZZ/m_{\gamma}\ZZ$-extension. 
We may thus assume $\tau_{\gamma}$ acts on $\alpha_{\gamma}, \beta$ by \[\tau_{\gamma}(\alpha_{\gamma}) = \zeta_{m_{\gamma}}^{\gamma^{\dagger}}\alpha_{\gamma} \quad \tau_{\gamma}(\beta) =\zeta_{m_{\gamma}}^{-\gamma^{\dagger}r} \beta\] for $\gamma^{\dagger}  \in  (\ZZ/m_{\gamma}\ZZ)^{\times}$.
Lemma \ref{Pries lem} holds for $\ZpZ \rtimes \ZZ/m_{\gamma}\ZZ$-extensions, as $\gamma^{\dagger}$ is invertible mod $m_{\gamma}$.
Moreover, we have that \[\tau_{\gamma}(\alpha_{\gamma}^i\beta^j) = \zeta_{m_{\gamma}}^{\gamma^{\dagger}i -\gamma^{\dagger}rj}. \]
Let $\zeta_{m_{\gamma}}^{s_{\gamma}}$ be the first diagonal entry of $\tau_{\gamma}$.
We have $\zeta_{m_{\gamma}}^{\gamma^{\dagger}i -\gamma^{\dagger}rj} = \zeta_{m_{\gamma}}^{s_{\gamma}}$ if and only if \[i - rj \equiv s_{\gamma}\gamma^{\dagger -1} \pmod{m_{\gamma}}.\]
Then let $\theta$ be the injection $\theta: \ZZ/m_{\gamma}\ZZ  \hookrightarrow \ZmZ $. 
We define $\gamma^{-1} \in \ZmZ$ to be the image $\theta(\gamma^{\dagger -1})$.
In particular the set 
\[\ZZ_{\gamma} = \{e \in \ZZ : p \nmid e>0, e \: \text{satisfies the congruence} \: e \equiv \gamma^{-1} h(G) n^{\dagger} \pmod{m}\}\] 
is well defined. 
Hence there is a stratification 
\[\Delta_G^{\wild} = \bigsqcup_{\gamma^{-1}=0}^{m-1}\bigsqcup_{r \in \ZZ_{\gamma}}\Delta_{G, \gamma}(r).\] 
From this we obtain the following corollary to Proposition \ref{prop: dim of moduli space}.
\begin{cor}\label{cor;non connected dim}
   For each $\gamma \in \ZmZ$, let $m_{\gamma} = \frac{m}{\gcd(\gamma, m)}$. The spaces $\Delta_{G,\gamma}(r)$ of non-connected $G$-torsors each have dimension 
   \[\dim \Delta_{G,\gamma}(r) =  \left\lfloor \frac{r - 1}{m_{\gamma}} \right\rfloor + 1 - \left\lfloor \frac{\left\lfloor \frac{r - 1}{m_{\gamma}} \right\rfloor + 1}{p}\right\rfloor. \]
\end{cor}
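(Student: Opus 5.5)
The plan is to reduce to the connected case, Proposition \ref{prop: dim of moduli space}, applied to the smaller group $G_\gamma := \ZpZ \rtimes \ZZ/m_{\gamma}\ZZ$. Recall that a torsor in $\Delta_{G,\gamma}$ corresponds to an algebra $L \cong E^{\gcd(\gamma,m)}$, where $E/K$ is a $G_\gamma$-extension.

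\textbf{Step 1: identify the strata.} The first step is to show that $\Delta_{G,\gamma}(r)$ is, as a scheme, identified with the stratum of $G_\gamma$-extensions with ramification jump $r$ and fixed class $\gamma^{\dagger}\in(\ZZ/m_\gamma\ZZ)^\times$. There are three ingredients.
\begin{enumerate}
\item A $G$-torsor whose classifying map $\Gamma_K \to G$ factors through $G_\gamma$ is induced from the $G_\gamma$-torsor $\spec E$. Induction $E \mapsto \operatorname{Ind}_{G_\gamma}^{G} E = E^{\oplus [G:G_\gamma]}$ gives a map on moduli.
\item On $k$-points this map is a bijection onto $D_\gamma$. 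The only possible ambiguity is conjugation of the image subgroup inside $G$. Conjugates of $G_\gamma$ by elements of $\ZpZ$ give isomorphic induced torsors, so they do not change the isomorphism class. Alternatively, the factorisation was already fixed when the stratification was defined, so nothing further needs to be chosen.
\item Inducing does not change the ramification jump. The jump is read off from the connected component $E$, since the wild part $\ZpZ$ is the same in both groups and $\sigma$ acts on $\beta$ in the same way.
\end{enumerate}
Hence $\dim \Delta_{G,\gamma}(r) = \dim \Delta_{G_\gamma,\gamma^\dagger}(r)$.

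\textbf{Step 2: apply the connected case.} Because $\gamma^{\dagger}$ is invertible mod $m_\gamma$, Lemma \ref{Pries lem} holds for $G_\gamma$-extensions. This was noted in the text, using the actions $\tau_\gamma(\alpha_\gamma)=\zeta_{m_\gamma}^{\gamma^\dagger}\alpha_\gamma$ and $\tau_\gamma(\beta)=\zeta_{m_\gamma}^{-\gamma^\dagger r}\beta$. Consequently Proposition \ref{prop: dim of moduli space} applies with $m$ replaced by $m_\gamma$. By Lemma \ref{Pries lem}(2), the Artin--Schreier polynomial $f$ involves only exponents $e$ with $e \equiv r \pmod{m_\gamma}$, $1\le e\le r$ and $p\nmid e$. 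Counting $e=r-m_\gamma t$ exactly as in that proof gives
\[
\left\lfloor \tfrac{r-1}{m_\gamma}\right\rfloor+1-\left\lfloor \tfrac{\lfloor (r-1)/m_\gamma\rfloor+1}{p}\right\rfloor,
\]
which is the claimed dimension.

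\textbf{Main obstacle.} The hard part is Step 1 at the level of schemes rather than $k$-points. One must check that the family over $\Delta_{G_\gamma,\gamma^\dagger}(r)$ built in \cite{OBUS2010565} induces a family of $G$-torsors that is a bijective, and hence dimension-preserving, parametrisation of $\Delta_{G,\gamma}(r)$. To handle this, I would argue that induction is a constructible map that is injective on isomorphism classes, which is sufficient for the dimension statement. I would also verify that the congruence defining $\ZZ_\gamma$ through $\gamma^{-1}=\theta(\gamma^{\dagger-1})$ matches Lemma \ref{Pries lem}(3) for $G_\gamma$. In that comparison, $n^\dagger$ and $h$ are computed for $G_\gamma$, and reduction mod $m_\gamma$ is compatible with $\theta$.
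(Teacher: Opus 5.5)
Your proposal follows the paper's proof: reduce, via the morphism $\Delta_{H_\gamma}\to\Delta_G$ induced by the inclusion $H_\gamma=\ZpZ\rtimes\ZZ/m_\gamma\ZZ\hookrightarrow G$, to connected $H_\gamma$-torsors with the same ramification jump, and then apply the connected-case count with $m_\gamma$ in place of $m$. The one correction is that you should claim only that this map is quasi-finite, as the paper does, and not that it is bijective or injective on isomorphism classes; finite-to-one is all the dimension statement needs, whereas injectivity can fail when conjugation by $\tau$ restricts to an outer automorphism of $H_\gamma$ (for $G=S_3$, $p=3$, $\gamma=0$, the non-isomorphic $\ZZ/3\ZZ$-torsors $\beta^3-\beta=f$ and $\beta^3-\beta=-f$ induce the same $S_3$-torsor).
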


\begin{proof}
    For every such $\gamma$, we can reduce to the case of connected $\ZpZ \rtimes \ZZ/m_{\gamma}\ZZ$-torsors. 
    Then if we write $H_{\gamma} = \ZpZ \rtimes \ZZ/m_{\gamma}\ZZ$, the inclusion 
    $\ZpZ \rtimes \ZZ/m_{\gamma}\ZZ \hookrightarrow \ZpZ \rtimes \ZZ/m\ZZ$ 
    induces the quasifinite morphism 
    $\Delta_{H_{\gamma}} \rightarrow \Delta_G$ 
    which preserves the dimension of each locus with ramification jump $r$.
    Then we have
    \[  \dim \Delta_{G,\gamma}(r) = \dim \Delta_{H_{\gamma}}(r) =  \left\lfloor \frac{r - 1}{m_{\gamma} } \right\rfloor + 1 - \left\lfloor \frac{\left\lfloor \frac{r - 1}{m_{\gamma} } \right\rfloor + 1}{p}\right\rfloor.\]
\end{proof}\noindent When $\gamma = 0$, the action of $\tau$ on $\alpha$ and $\beta$ is trivial and we are in the case of purely wild $\ZpZ$-torsors. In this case $g = m$ and the formula in Corollary \ref{cor;non connected dim} reduces to $r - \left\lfloor \frac{r}{p} \right\rfloor$, agreeing with the formula in \cite[Prop.~$2.11$]{Yasuda2014pCyclicMcKay}.
The dimension of each $\Delta_{G,\gamma}(r)$ satisfies the following relation.

\begin{lem}\label{lem: change of var dim}
    We have that $\dim \Delta_{G,\gamma}((m_{\gamma}p)\cdot n + s) = \dim\Delta_{G,\gamma}(s) + n(p-1)$.
\end{lem}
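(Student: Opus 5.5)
The plan is to compute directly from the closed formula in Corollary \ref{cor;non connected dim}. Write $N(r) = \left\lfloor \frac{r-1}{m_{\gamma}} \right\rfloor + 1$, so that
\[\dim \Delta_{G,\gamma}(r) = N(r) - \left\lfloor \frac{N(r)}{p} \right\rfloor.\]
Set $r = m_{\gamma}p\, n + s$. The first step is to show that $N(r) = N(s) + pn$. This holds because $m_{\gamma}pn$ is an integer multiple of $m_{\gamma}$, so
\[\left\lfloor \frac{m_{\gamma}pn + s - 1}{m_{\gamma}}\right\rfloor = pn + \left\lfloor \frac{s-1}{m_{\gamma}}\right\rfloor.\]

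Next, since $pn$ is an integer multiple of $p$, we get $\left\lfloor \frac{N(s)+pn}{p}\right\rfloor = \left\lfloor \frac{N(s)}{p}\right\rfloor + n$. Substituting both identities into the formula gives
\[\dim \Delta_{G,\gamma}(r) = N(s) + pn - \left\lfloor \frac{N(s)}{p} \right\rfloor - n = \dim \Delta_{G,\gamma}(s) + n(p-1),\]
which is the claim.

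The only points needing care are the well-definedness hypotheses. First, $s$ and $r$ must both lie in $\ZZ_{\gamma}$, so that both strata are defined. This holds because $r \equiv s \pmod{m}$: the shift $m_{\gamma}pn$ preserves the congruence modulo $m_{\gamma}$, which is the relevant modulus once we reduce to $H_{\gamma}$ as in Corollary \ref{cor;non connected dim}. Moreover $p \nmid r$ if and only if $p \nmid s$. Second, we need $s \geq 1$, so that the floor identities are used in the range where the formula applies; for $s \geq 1$ the identities above hold for every integer $s$. I do not expect a real obstacle here. The main thing to check is that the congruence defining $\ZZ_{\gamma}$ is read modulo $m_{\gamma}$ in the non-connected case, so that adding multiples of $m_{\gamma}p$ keeps us inside $\ZZ_{\gamma}$.

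Alternatively, one can argue combinatorially through the count $\#E(G,r,\gamma)$ of Proposition \ref{prop: dim of moduli space}. The integers $e \le r$ in the residue class of $r$ modulo $m_{\gamma}$ increase by $pn$ when passing from $s$ to $r$, and among any $p$ consecutive terms of such an arithmetic progression exactly one is divisible by $p$, since $p \nmid m_{\gamma}$. This gives the same net increase of $n(p-1)$, and I would mention it as a sanity check.
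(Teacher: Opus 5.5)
Your proposal is correct and follows the paper's own argument: substitute $r = m_{\gamma}pn + s$ into the closed formula of Corollary \ref{cor;non connected dim}, pull $pn$ out of the inner floor, then pull $n$ out of the outer floor. One small slip in your side remark: $r \equiv s \pmod{m}$ holds only when $m_{\gamma} = m$ (in general the shift preserves the residue class only modulo $m_{\gamma}$), but since the identity is a formal floor computation valid for every integer $s$, nothing depends on it.
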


\begin{proof}
    We have \begin{align*}
        \left\lfloor \frac{(m_{\gamma}p)\cdot n + s - 1}{m_{\gamma}} \right\rfloor + 1  - & \left\lfloor \frac{\left\lfloor \frac{(m_{\gamma}p)\cdot n + s - 1}{m_{\gamma}} \right\rfloor + 1}{p}\right\rfloor \\
        = & \left\lfloor \frac{s - 1}{m_{\gamma}} \right\rfloor + np + 1-  \left\lfloor \frac{\left\lfloor \frac{s - 1}{m_{\gamma}} \right\rfloor + np + 1}{p}\right\rfloor \\
        = & \left\lfloor \frac{s - 1}{m_{\gamma}} \right\rfloor + np + 1 - n -  \left\lfloor \frac{\left\lfloor \frac{s - 1}{m_{\gamma}} \right\rfloor + 1}{p}\right\rfloor \\
        = & n(p-1) +  \left\lfloor \frac{s - 1}{m_{\gamma}} \right\rfloor + 1 - \left\lfloor \frac{\left\lfloor \frac{s - 1}{m_{\gamma}} \right\rfloor + 1}{p}\right\rfloor.
    \end{align*}
\end{proof}
\noindent By Lemma \ref{Pries lem}, the Laurent series $f$ coming from the Artin-Schreier relation $\beta^p - \beta = f$ satisfies that the possible degrees of the terms are all congruent to $r$ modulo $m$.
The representative polynomials $f \in k[t^{-1}]$ of order $-r$ are of the form 
\[f = \sum_{ i\in E(G,r,\gamma )}f_{-i}t^{-i}.\] 
Let $n^{\dagger} = \# \{ g \in \langle \tau \rangle : g \sigma g^{-1} = \sigma\}$, and $\mu_{m/n^{\dagger}}$ be the group of $m/n^{\dagger}$-th roots of unity.
Each root of unity $\zeta_{m/n^{\dagger}} \in \mu_{m/n^{\dagger}}$ acts on representative polynomials via the diagonal action $f \mapsto a(G)^{-1}f$, and on $ k^{\times} \times k^{\dim \Delta_{G,\gamma}(r) - 1}$ via multiplication on each coordinate.
\begin{lem}\label{lem; correspondence}
 There is a correspondence \[ \Delta_{G,\gamma}(r)\leftrightarrow RP_{k,r}/\mu_{m/n^{\dagger}} \leftrightarrow (k^{\times} \times k^{\dim \Delta_{G,\gamma}(r) - 1})/\mu_{m/n^{\dagger}}.\]
\end{lem}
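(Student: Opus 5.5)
The plan is to establish the two correspondences in turn. Both rest on the Artin--Schreier description of $G$-extensions with jump $r$ and on the normal form of Lemma \ref{Pries lem}.

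\emph{First correspondence.} Let $RP_{k,r}$ be the set of representative polynomials
\[f = \sum_{i\in E(G,r,\gamma)} f_{-i}t^{-i}, \qquad f_{-r}\neq 0.\]
Given a $G$-extension $L/K$ with jump $r$, Pries' normal form gives $F=K(\alpha)$ with $\alpha = t^{1/m}$ (or $t^{1/m_\gamma}$ in the non-connected case, via Corollary \ref{cor;non connected dim}). It also gives $L=F(\beta)$ with $\beta^p-\beta=f$. By Artin--Schreier theory, $f$ is determined modulo $\wp(F)$. Each class in $F/\wp(F)$ of pole order $r$ has a unique reduced representative containing no monomials of degree divisible by $p$ and no nonnegative-degree terms; over algebraically closed $k$ the constant term is removed by $\wp(k)=k$. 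The $\tau$-equivariance in Lemma \ref{Pries lem}(2) forces the remaining exponents to be congruent to $r$ mod $m$. Hence the reduced $f$ lies in $RP_{k,r}$, and after substituting $\alpha^m=t$ it can be written in $k[t^{-1}]$ as above. Conversely, every $f\in RP_{k,r}$ defines such an extension with jump exactly $r$, since $p\nmid r$.

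The remaining point is to decide when two polynomials give isomorphic $G$-torsors. For this I would argue as follows.
\begin{enumerate}
\item The $\ZpZ$-extensions $F(\wp^{-1}f)$ and $F(\wp^{-1}f')$ coincide iff $f' \equiv cf \pmod{\wp(F)}$ with $c\in\FF_p^\times$.
\item The identification with $G$ (fixing $\sigma(\beta)=\beta+1$ and the prescribed $\tau$-action) cuts the admissible $c$ down to the twist by $a(G)$ and its powers.
\item Changing the choice of $\alpha$ by a root of unity (automorphisms of $F/K$ commuting with the data) rescales $t^{-i}\mapsto \zeta^{-i}t^{-i}$. Since all exponents are congruent to $r$ mod $m$, this acts on $f$ by a single scalar, namely an element of $\mu_{m}$ acting through $\zeta^{-r}$.
\end{enumerate}
Using $n^\dagger=\gcd(r,m)$ from Lemma \ref{Pries lem}(3), the effective group is $\mu_{m/n^\dagger}$, acting diagonally. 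I would then check that this matches the stated action $f\mapsto a(G)^{-1}f$, using $a(G)=\zeta_m^{\gamma r}$ from the proof of Lemma \ref{lem; matrix of tau}. This yields $\Delta_{G,\gamma}(r)\leftrightarrow RP_{k,r}/\mu_{m/n^\dagger}$, compatibly with the scheme of Proposition \ref{prop: dim of moduli space}.

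\emph{Second correspondence.} Send $f$ to its coefficient vector $(f_{-r},(f_{-i})_{i\in E(G,r,\gamma)\setminus\{r\}})$. The leading coefficient is nonzero and the others are arbitrary, so $RP_{k,r}\cong k^\times\times k^{\#E(G,r,\gamma)-1}$. By Proposition \ref{prop: dim of moduli space}, $\#E(G,r,\gamma)=\dim\Delta_{G,\gamma}(r)$. The $\mu_{m/n^\dagger}$-action on $f$ is scalar multiplication on all coefficients, which is exactly the coordinatewise action, so the bijection is equivariant and descends to quotients.

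\emph{Main obstacle.} I expect the difficulty to lie in pinning down the isomorphism-classification group precisely: proving that the identifications are generated exactly by $\mu_{m/n^\dagger}$. This means showing that the $\FF_p^\times$-twists and the root-of-unity reparametrisations of $\alpha$ combine to this cyclic group and nothing larger, and that distinct reduced representatives in $RP_{k,r}$ are not Artin--Schreier equivalent. I would handle it by comparing stabilisers of the normalised generators $(\alpha,\beta)$ under the conditions of Lemma \ref{Pries lem}, relying on Pries \cite[Lem.~1.4.1]{Pries2002Families} and Obus \cite[Thm.~5.6]{OBUS2010565}.
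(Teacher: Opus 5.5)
Your proposal is correct and follows essentially the same route as the paper's proof: the Artin--Schreier normal form with all exponents congruent to $r \pmod m$, the identification of $f$ with $\tau(f)=\zeta_m^{-\gamma r}f=a(G)^{-1}f$, the equality $n^{\dagger}=\gcd(r,m)$ to land in $\mu_{m/n^{\dagger}}$, and finally the coefficient map, with your version filling in details the paper leaves implicit (reduced representatives, the converse direction). On the obstacle you flag: your items 2 and 3 are one identification rather than two to be combined, because isomorphisms of $G$-torsors are conjugations, conjugation by $\sigma$ does not move $[f]$, and conjugation by $\tau^{j}$ acts on $f$ exactly as an element of $\Gal(F/K)$, i.e.\ by $a(G)^{\pm j}$, whereas re-choosing $\alpha$ as a coordinate is only a change of parametrisation.
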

\begin{proof}
Let $x \in \Delta_{G,\gamma}(r)$ be a $G$-torsor with fixed ramification jump $r$.
Then let $\beta$ be the generator of the $\ZpZ$-part of $x$ satisfying the Artin-Schreier relation $\beta^p - \beta -f = 0$.
We want to show that $f$ is uniquely determined up to multiplication by an element of $\mu_{m/n^{\dagger}}$.
The generator $\sigma$ of $\ZpZ$ satisfies $\tau \sigma \tau^{-1} = \sigma^{a(G)}$ and acts on $\beta$ via $\sigma(\beta) = \beta + 1$.
The invertible element $a(G) \in (\ZpZ)^{\times}$ is of order $m/n^{\dagger}$, and so we may substitute $\beta $ and $f$ in the equation by $a(G)^{-1}\beta$ and $a(G)^{-1}f$.
Since $\beta \mapsto \beta^p$ is a ring homomorphism and $\sigma^{a(G)}(a(G)^{-1}\beta) =a(G)^{-1} \beta +1$, this gives the relation $a(G)^{-1}\beta^p - a(G)^{-1}\beta - a(G)^{-1}f=0$.
As each term in $f$ has degree congruent to $r \pmod{m}$, substituting $t$ for $\zeta_m^{\gamma}t$ in $f$ scales each term by $\zeta_m^{- \gamma r}$.
By \cite[Lem.~$1.4.1$(iv)]{Pries2002Families} we have $n^{\dagger}$ satisfies that $n^{\dagger} = \gcd(r,m)$, and so $\zeta_m^{-\gamma r} \in \mu_{m/n^{\dagger}}$.
The second correspondence sends $\sum_{ i\in E(G,r,\gamma)}f_{-i}t^{-i}$ to $(f_{-i})_{i < r}$.
Since $\dim \Delta_{G,\gamma}(r)= \# E(G,r, \gamma)$, the result follows.
\end{proof}
\noindent We have that
\[ (k^{\times} \times k^{\dim \Delta_{G,\gamma}(r) - 1})/\mu_{m/n^{\dagger}}\] 
may be viewed as the variety 
\[(\mathbb{G}_m \times \AAA_{k}^{\dim \Delta_{G,\gamma}(r) - 1})/\mu_{m/n^{\dagger}}.\] 
This is not a coarse moduli space for $\Delta_{G, \gamma}(r)$, but its ind-perfection, the inductive limit taken with respect to the Frobenius morphism, is.
For our purposes, it is enough to use this as a parameter space for $\Delta_{G, \gamma}(r)$.

\subsection{Counting local extensions}
A consequence of our formula for the structure of each stratum of the moduli space is that we are able to prove a result on the number of $G$-extensions of $\FF_q((t))$ with fixed ramification jump $r$.

\begin{prop}\label{g ext prop}
     Let $\gamma \in (\ZmZ)^{\times}$ be as in Lemma \ref{Pries lem} and such that $\gamma$ satisfies $\gamma = \gamma^q$ in $\ZmZ$. 
     Then the number of $G$-extensions of $\FF_q((t))$ with given ramification jump $r \in \ZZ_{\gamma}$ is \[|Z(G)|(q-1)q^{ \left\lfloor \frac{r - 1}{m} \right\rfloor - \left\lfloor \frac{\left\lfloor \frac{r - 1}{m} \right\rfloor + 1}{p}\right\rfloor},\] where $Z(G)$ is the centre of $G$.
     Moreover, if $\gamma \neq \gamma^q$ in $\ZmZ$, there are no such $G$-extensions.
\end{prop}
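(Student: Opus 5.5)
We plan to count $\FF_q$-points of the stratum $\Delta_{G,\gamma}(r)$ using the explicit parameter space from Lemma \ref{lem; correspondence}, and then correct for automorphisms. Over $\overline{\FF}_q$, a $G$-extension with jump $r$ is given by a tame part $\alpha=t^{1/m}$ together with an Artin--Schreier generator $\beta$, $\beta^p-\beta=f$. Here $f=\sum_{i\in E(G,r,\gamma)}f_{-i}t^{-i}$ is a reduced representative polynomial with $f_{-r}\neq 0$. By Lemma \ref{Pries lem}(2), all exponents are congruent to $r$ modulo $m$ and prime to $p$. The count proceeds in three steps.

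\textbf{Step 1: Galois descent of the tame part.} An extension defined over $\FF_q((t))$ means the arithmetic Frobenius $\phi$ (raising coefficients to the $q$-th power) normalizes the $G$-action in a compatible way. On the tame part $\FF_q((t))(\alpha)$, $\phi$ acts on $\zeta_m$ by $\zeta_m\mapsto\zeta_m^q$. Hence $\phi\tau\phi^{-1}$ acts on $\alpha$ by $\zeta_m^{q\gamma}$, whereas $\tau$ acts by $\zeta_m^{\gamma}$.

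For the full extension to be Galois over $\FF_q((t))$ with group $G$ and the given datum $\gamma$ (normalized as in \cite[Lem.~1.4.1]{Pries2002Families}), the Frobenius-twisted action must coincide with the original one. This forces $\gamma\equiv q\gamma$, i.e. $\gamma=\gamma^q$ in the notation of the statement. If this fails, the twisted extension has a different invariant $\gamma$, so by Lemma \ref{Pries lem}(3) it is not isomorphic to the original and no extension descends. This proves the vanishing statement.

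\textbf{Step 2: counting $f$.} Assume $\gamma=\gamma^q$. The conjugation relation $\tau\sigma\tau^{-1}=\sigma^{a(G)}$ together with $\tau(\beta)=\zeta_m^{-\gamma r}\beta$ is then Frobenius stable. Descent of $\beta$ amounts to requiring that $f$ can be chosen with coefficients $f_{-i}\in\FF_q$: the class of $f$ modulo $\wp(\FF_q((t)))$ must be $\phi$-invariant, and Artin--Schreier reduction to the reduced form is unique. This gives $(q-1)q^{\#E(G,r,\gamma)-1}$ choices, with $q-1$ choices for the leading coefficient and $q$ for each of the others. By Proposition \ref{prop: dim of moduli space}, the exponent $\#E(G,r,\gamma)-1$ equals $\lfloor\frac{r-1}{m}\rfloor-\lfloor(\lfloor\frac{r-1}{m}\rfloor+1)/p\rfloor$, which matches the power of $q$ in the statement.

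\textbf{Step 3: the factor $|Z(G)|$.} Here we reconcile the $\mu_{m/n^\dagger}$-quotient of Lemma \ref{lem; correspondence} with the count we actually want. The number of extensions $L\subset\overline{K}$ (or of surjections $\Gamma\to G$ up to the appropriate equivalence) differs from the number of points of the quotient $(\mathbb{G}_m\times\AAA^{\dim-1})/\mu_{m/n^\dagger}$ by orbit–stabilizer bookkeeping. The orbit of $f$ under $\mu_{m/n^\dagger}$ has size $m/n^\dagger$, while each isomorphism class of $G$-extension carries $|\mathrm{Aut}|=|Z(G)|$ automorphisms commuting with $G$. We note that $Z(G)$ is the kernel of the action of $\ZmZ$ on $\ZpZ$, which has order $n^\dagger$ (the $p$-part is never central since the action is nontrivial, except in the degenerate abelian case, which must be checked separately). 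The plan is to show that the distinct $\FF_q$-rational $f$ counted in Step 2 correspond to distinct pairs (extension, identification with $G$ up to inner automorphism), with weight $n^\dagger\cdot(\text{choices})/(m/n^\dagger)\cdots$ collapsing to exactly $|Z(G)|$.

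I expect this last normalization to be the main obstacle. It requires pinning down precisely which objects are being counted and checking that the $\mu_{m/n^\dagger}$-orbits are free on $\FF_q$-points, which uses $\gcd(r,m)=n^\dagger$. I would first test the bookkeeping on $S_3$ with $p=3$, $m=2$ before writing the general argument.
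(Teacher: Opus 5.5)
\textbf{There is a genuine gap in Steps 2--3.} Step 1 is essentially the paper's argument for the vanishing part: Frobenius sends the component of $\Delta_{\ZmZ}$ labelled $\gamma$ to the one labelled $q\gamma$. The problem comes later. The factor $|Z(G)|$ is never derived, and your orbit--stabilizer weighting cannot produce it, because Step 2 has already fixed too much. Write $E=E(G,r,\gamma)$ and $n=m/n^{\dagger}$. By fixing $\alpha=t^{1/m}$ and demanding $f_{-i}\in\FF_q$, you make three errors.
\begin{enumerate}
\item You ignore that $K=\FF_q((t))$ has $m$ pairwise non-isomorphic totally ramified $\ZmZ$-extensions $F_u=K(\pi_u)$, with $\pi_u^m=ut$ and $u\in\FF_q^{\times}/\FF_q^{\times m}$. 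They all become isomorphic over $\overline{\FF}_q$, and each can be the tame part $L^{\sigma}$ of a $G$-torsor.
\item You misstate descent. By Lemma \ref{lem; correspondence}, the geometric class determines $f$ only up to $\mu_n$. So Frobenius-invariance means $\phi(f)\in\mu_n f$, not $\phi(f)=f$.
\item You count as distinct rational $f$'s that differ by $\mu_n$. These give isomorphic torsors, since conjugating the identification with $G$ by $\tau$ replaces $f$ by $a(G)^{\pm1}f$.
\end{enumerate}

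\textbf{The correct enumeration.} For each $u$, isomorphism classes of $G$-torsors with the given $\gamma$, jump $r$ and tame part $F_u$ are in bijection with the following data modulo the \emph{free} action of $\mu_n\subset\FF_p^{\times}$: reduced elements $f=\sum_{j\in E}c_j\pi_u^{-j}$ with $c_j\in\FF_q$ and $c_r\neq 0$. These are exactly the classes in $F_u/\wp(F_u)$ lying in the eigenspace forced by $\tau\sigma\tau^{-1}=\sigma^{a(G)}$. The total is therefore
$m(q-1)q^{\#E-1}/n=n^{\dagger}(q-1)q^{\#E-1}$, and $n^{\dagger}=|Z(G)|$ when $G$ is nonabelian. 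In the abelian case $n=1$, and the constant term $\FF_q/\wp(\FF_q)\cong\FF_p$ survives in the eigenspace, giving $pm=|Z(G)|$ again. Your weight $n^{\dagger}(q-1)q^{\#E-1}/n$, read literally, is off by the factor $n$.

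\textbf{Comparison with the paper.} Numerically, your Step 2 count $(q-1)q^{\#E-1}$ equals the number of $\FF_q$-points of the coarse space $M$ of the stratum. The paper gets this number from the Lefschetz trace formula on (the ind-perfection of) $\AAA^{\#E}_{\overline{\FF}_q}/\mu_{n}$, taking the difference of the counts for $\le r$ and $\le r-1$. It then passes from $M(\FF_q)$ to $\Delta_{G,\gamma}(r)(\FF_q)$ using that every point has automorphism group $Z(G)$. Concretely, each $\FF_q$-point of $M$ is realised by exactly $|H^1(\FF_q,Z(G))|=|Z(G)|$ non-isomorphic $\FF_q$-torsors, because the stabiliser is constant with trivial Frobenius action. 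Either this twisting argument or the explicit count over the $m$ fields $F_u$ is the missing idea.

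\textbf{What is being counted.} You also need to settle this. It must be isomorphism classes of $G$-torsors with fixed $\gamma$, i.e.\ $\FF_q$-points of $\Delta_{G,\gamma}(r)$. The number of subfields of $\overline{K}$ is $\frac{m}{p-1}(q-1)q^{\#E-1}$ for nonabelian $G$, which differs from the formula unless $a(G)$ generates $\FF_p^{\times}$.

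\textbf{Test cases.} Your planned test, $S_3$ with $p=3$, cannot detect any of this: there $|Z(G)|=1$ and $n=p-1$, so all these counts coincide. Instead, take $\ZZ/3\ZZ\rtimes\ZZ/4\ZZ$ with the inversion action, $q=9$, $r=2$. There the $16=2(q-1)$ torsors are $4$ fields $F_u$ times $4$ classes $c\pi_u^{-2}$ with $c\in\FF_9^{\times}/\{\pm1\}$. Use $D_5$ in characteristic $5$ to test the torsor-versus-subfield distinction.
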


\begin{proof}
Denote by $\Delta_{G,\gamma}(\leq r)$ the stack parametrising $G$-torsors with ramification jump at most $r$. 
When $\gamma \in (\ZmZ)^{\times}$, the $G$-torsors parametrised by this stack exactly correspond to $G$-extensions. We want to count $\# \Delta_{G,\gamma}(\leq r)(\FF_q) - \# \Delta_{G,\gamma}(\leq r-1)(\FF_q)$, providing that $\Delta_{G,\gamma}(\leq r)$ is defined over $\FF_q$.
 Let $W$ be the connected component of $\Delta_{\ZmZ}$ corresponding to $\gamma$. We may identify $\Delta_{G,\gamma}$ with the fibre product $\mathcal{Y} = \Delta_{G} \times_{\Delta_{\ZmZ}} W$. 
 Then $\Delta_{G,\gamma}$ is an ind-DM stack in the sense of \cite{ToniniYasuda2020I}, and hence may be written as a limit $\mathcal{Y} = \lim_n \mathcal{Y}_n$.
 We use this to show that $\Delta_{G,\gamma}(\leq r)$ is an ind-DM stack of finite type defined over $\FF_q$ by showing that it is the ind-perfection of a closed substack $\mathcal{Z} \subseteq \mathcal{Y}_n$, when $n$ is large enough.
 Let $\mathcal{U}$ be the fibre product \[\mathcal{U} = \Delta_G \times_{W} \spec(\FF_q) = \Delta_G \times_{\Delta_{\ZmZ}}  \spec(\FF_q). \]
 There is a morphism $\mathcal{U} \rightarrow \Delta_{\ZpZ}$, and so we define $\mathcal{U}(\leq r)$ to be the preimage of $\Delta_{\ZpZ}(\leq r)$ under this morphism. 
 It then follows from the structure of $\Delta_{\ZpZ}$ and $\Delta_{\ZpZ}(\leq r)$ from \cite[Thm.~$4.13$]{ToniniYasuda2020I} that $\mathcal{U}(\leq r) \subseteq \mathcal{U}$ is the ind-perfection of a DM stack of finite type defined over $\FF_q$.
 Consider the finite \'etale morphism $\mathcal{U} \rightarrow \mathcal{Y}$.
 The image of $\mathcal{U}(\leq r )$ under this morphism is given by $\Delta_{G,\gamma}(\leq r)$.
 Thus it follows that $\Delta_{G,\gamma}(\leq r)$ is defined over $\FF_q$, and satisfies the same properties as $\mathcal{U}(\leq r)$.
  It remains to count the number of $G$-extensions of $\FF_q((t))$ with fixed ramification jump by calculating $\# \Delta_{G,\gamma}(\leq r)(\FF_q) - \# \Delta_{G,\gamma}(\leq r-1)(\FF_q)$.
Suppose $\gamma \neq \gamma^q$ in $\ZmZ$ and consider the fibre product diagram \[
\begin{tikzcd}
\Delta_G \times_{\Delta_{\ZmZ}} W \arrow{r} \arrow{d} & \Delta_G \arrow{d} \\
W \arrow{r} & \Delta_{\ZmZ}
 \end{tikzcd},
\]
where $W$ the connected component of $\Delta_{\ZmZ}$ corresponding to $\gamma$, and $\Delta_G \times_{\Delta_{\ZmZ}} W$ is identified with $\Delta_{G,\gamma}$.
By \cite[Theorem.~$B$]{ToniniYasuda2020I}, $\Delta_{\ZmZ} \otimes_{\FF_q} \overline{\FF}_q$ has $m$ components given by the disjoint union $B\ZmZ \sqcup \cdots \sqcup B\ZmZ$ and these correspond to elements of $\ZmZ$.
The Galois action on the components over $\overline{\FF}_q $ sends $\gamma \mapsto \gamma^q$, and this has fixed points if and only if $\gamma = \gamma^q$ in $\ZmZ$.
 In particular, if $\gamma \neq \gamma^q$, this action has no fixed points, and thus there are no $\FF_q$ points.
 So now we assume that $\gamma = \gamma^q$.
 Let $M$ be the coarse moduli space of $\Delta_{G,\gamma}(\leq r)$, and $M_{\overline{\FF}_q} = M \otimes_{\FF_q}\overline{\FF}_q$. 
 It follows from Lemma \ref{lem; correspondence} that $M_{\overline{\FF}_q}$ is isomorphic to the ind-perfection of $\AAA_{\overline{\FF}_q}^{d+1}/\mu_{m/n^{\dagger}}$, where $d = \dim \Delta_{G,\gamma}(r) - 1$.
By the Lefschetz trace formula,
\[\# M(\FF_q) = \sum_i (-1)^i \text{Tr}(\Frob|H^i_c(M_{\overline{\FF}_q},\QQ_{\ell}))\] where $H^i_c(-,\QQ_{\ell})$ is the compactly supported \'etale $\ell$-adic cohomology.
%
%
The quotient variety $\AAA_{\overline{\FF}_q}^{d+1}/\mu_{m/n^{\dagger}}$ has the same compactly supported $\ell$-adic cohomology groups as $\AAA_{\overline{\FF}_q}^{d+1}$, and we have $H^i_c(\AAA_{\overline{\FF}_q}^{d},\QQ_{\ell}) =  \QQ_{\ell}(-d)$ if $i = 2d$ and it is zero otherwise.
Hence  \[\# M(\FF_q) =\sum_i (-1)^i \text{Tr}(\Frob|H^i_c(M_{\overline{\FF}_q},\QQ_{\ell})) = \sum_i (-1)^i \text{Tr}(\Frob|H^i_c(\AAA_{\overline{\FF}_q}^{d+1},\QQ_{\ell}) ) =  q^{d+1}. \]
Moreover, $\# \Delta_{G,\gamma}(\leq r)(\FF_q) = \frac{1}{|\aut(G)|} \#M(\FF_q)$ where $\aut(G) \cong Z(G)^{-1}$, and so \[\# \Delta_{G,\gamma}(r)(\FF_q)  = \# \Delta_{G,\gamma}(\leq r)(\FF_q) - \# \Delta_{G,\gamma}(\leq r -1)(\FF_q) = |Z(G)|(q-1)q^{d}. \qedhere\]

\end{proof} 
\section{$\mathbf{v}$-functions of representations}\label{sec; v functions}
\subsection{$\mathbf{v}$-functions for indecomposable representations $V_{d,s}$}

In this section we find an explicit formula for the $\mathbf{v}$-function associated to an indecomposable $d$-dimensional representation $V_{d,s}$ of $G$.
Suppose we have a $G$-\'etale $K$-algebra $M$. 
Let $\mathcal{O}_{M}$ be the integral closure of $\mathcal{O}_K$ in $M$.
The direct sum $\mathcal{O}_{M}^{\oplus d}$ has two $G$-actions, the first being the diagonal action coming from the $G$-action on $\mathcal{O}_{M}$ and the second coming from the extension $G \rightarrow GL_{d}(k) \xrightarrow{} GL_{d}(\mathcal{O}_{M})$ of the $G$-action $G \rightarrow GL_{d}(k)$. 
\begin{defi}
    The \emph{tuning module} of $M$ is the submodule $\Xi_M$ of $\mathcal{O}_{M}^{\oplus d}$ on which the two actions coincide.
\end{defi}
\begin{lem}
    The tuning module $\Xi_M$ is a free $k[[t]]$-module of rank $d$.
\end{lem}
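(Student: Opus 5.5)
The strategy is to identify $\Xi_M$ with the module of $G$-equivariant maps from the coordinate space into $\mathcal{O}_M$, and then sandwich it between two free $k[[t]]$-modules of rank $d$.

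First, write $V^\vee$ for the dual representation, that is, the $k$-span of $x_1,\dots,x_d$. An element of $\mathcal{O}_M^{\oplus d}$ is a $k$-linear map $\phi: V^\vee \to \mathcal{O}_M$, sending $x_i$ to the $i$-th coordinate. The condition that the two $G$-actions agree says exactly that $\phi$ is $G$-equivariant. Hence
\[\Xi_M \cong \Hom_{kG}(V^\vee, \mathcal{O}_M).\]
This is a $k[[t]]$-submodule of $\mathcal{O}_M^{\oplus d}$, because $G$ acts $\mathcal{O}_K$-linearly on $\mathcal{O}_M$ and $M^G = K$.

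Next comes the upper bound. Since $\mathcal{O}_M$ is a free $k[[t]]$-module of finite rank $\#G$, the module $\mathcal{O}_M^{\oplus d}$ is finitely generated and free over $k[[t]]$. The ring $k[[t]]$ is a PID, so the submodule $\Xi_M$ is free of rank at most $d\cdot\#G$. It remains to show that its rank, namely $\dim_K(\Xi_M\otimes K)$, equals $d$.

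Tensoring with $K$ gives $\Xi_M\otimes K = \Hom_{KG}(V^\vee_K, M)$, since $\mathcal{O}_M\otimes K = M$ and $\Hom$ from a finite-dimensional module commutes with this flat base change. Because $M$ is a $G$-étale $K$-algebra, the normal basis theorem (in its version for Galois algebras / $G$-torsors) gives $M\cong KG$ as $KG$-modules. Therefore
\[\Hom_{KG}(V^\vee_K, KG) \cong \Hom_K(V^\vee_K, K),\]
using that $KG$ is coinduced from the trivial group, i.e.\ the Frobenius isomorphism $KG\cong \Hom_K(KG,K)$. The right-hand side has dimension $d$. So $\Xi_M$ is free of rank exactly $d$.

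The main obstacle is justifying $M\cong KG$ for a non-connected $G$-étale algebra $M=F^{\oplus n}$. I would handle this by viewing $M$ as induced from the $H$-extension $F$: apply the classical normal basis theorem to $F\cong KH$, then induce from $H$ to $G$. Alternatively, one can note that any $G$-torsor over a field is étale-locally trivial and apply Hilbert 90 for $GL$ to the twisted form of the regular representation. For connected $M$ one could also check the rank directly via Corollary \ref{basis for OL} in the indecomposable case, which exhibits $\mathcal{O}_L^{\delta^d=0}$ explicitly. The abstract argument above avoids needing that, however.
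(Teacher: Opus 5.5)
Your proof is correct. The paper does not argue this itself; it only cites \cite[Prop.~6.3]{towardmotivicint}. What you give is a self-contained version of the standard argument behind that reference.

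\begin{itemize}
\item \textbf{Freeness.} $\Xi_M\cong\Hom_{kG}(V^\vee,\mathcal{O}_M)$ is a submodule of the finite free module $\mathcal{O}_M^{\oplus d}$ over the PID $k[[t]]$, so it is free.
\item \textbf{Rank.} After inverting $t$ you get $\Xi_M\otimes K=\Hom_{KG}(V^\vee_K,M)$, and the $G$-torsor structure of $M$ makes this $d$-dimensional.
\end{itemize}

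Your handling of the non-connected case is right, and it genuinely needs $M^G=K$. The paper's definition writes $\mathcal{O}_K$ there, which should be $K$. The condition $M^G=K$ makes $G$ permute the factors of $M$ transitively. Artin's theorem and a degree count then give $M\cong\mathrm{Ind}_H^G F$ with $F/K$ an $H$-Galois extension. The classical normal basis theorem induces up to give $M\cong KG$.

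You can bypass the normal basis theorem altogether. Base change along a finite extension $K'/K$ over which the torsor becomes trivial. $\Hom$ out of a finite-dimensional module commutes with this flat base change. Over $K'$, the algebra $M\otimes_K K'$ is the module of $K'$-valued functions on $G$ with the translation action, i.e.\ the coinduced module. Evaluation at the identity then gives $\dim_{K'}\Hom_{K'G}(V^\vee_{K'},M\otimes_K K')=d$ directly.

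Compared with the bare citation, your route makes explicit that only two inputs are used: the torsor structure of $M$ and the PID property of $k[[t]]$. It is also independent of the explicit bases in Corollary \ref{basis for OL}.
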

\begin{proof}
    See \cite[Prop.~$6.3$]{towardmotivicint}.
\end{proof}
\noindent The tuning module is used to define the $\mathbf{v}$-function.
\begin{defi}
    Let $M/K$ be a $G$-\'etale $K$-algebra and $L/K$ a $G$-extension. 
    Then the $\mathbf{v}$-function of $M$ is given by the formula 
    \[\mathbf{v}(M) = \frac{1}{\# G} \leng_{\mathcal{O}_K}\frac{\mathcal{O}_{M}^{\oplus d}}{\mathcal{O}_{M} \cdot \Xi_M}.\] 
    Moreover, if 
    $(x_{ij})_{1 \leq j \leq d} \in \mathcal{O}_{M}^{\oplus d}$
    is a $k[[t]]$-basis of $\Xi_M$ and $v_{L}$ is the normalised valuation on $L$
    then the $\mathbf{v}$-function is given by 
    \[\mathbf{v}(M) = \frac{1}{\# G}v_L(\det(x_{ij})_{i,j}).\]
\end{defi} \noindent We begin by calculating the $\mathbf{v}$-function for the connected $G$-torsors, corresponding to those $G$-\'etale $K$-algebras that are field extensions $L/K$.
\begin{nota} We write 
$I_{\gamma,r,s} = \{(i,j) \in \{0,1,\cdots,m-1\} \times \{0,1,\cdots, d-1\}: \: i-rj \equiv s \gamma^{-1} \bmod{m}\}$.    
\end{nota}
\begin{lem} \label{v fn lem}
    We have 
    \[\frac{1}{\# G }\sum_{j = 0}^{d-1}v_{L}(\delta^{j}(\alpha^i\beta^j)) = \sum_{(i,j) \in I_{\gamma,r,s}}\frac{i}{m}.\]
\end{lem}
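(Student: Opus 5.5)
We read the left-hand side as a sum over the index set $I_{\gamma,r,s}$: for each $(i,j)\in I_{\gamma,r,s}$ we take the element $\alpha^i\beta^j$ and evaluate $v_L(\delta^j(\alpha^i\beta^j))$. This is the form in which the lemma is used afterwards, where the basis of the tuning module consists of the $\alpha^i\beta^j$ with $(i,j)\in I_{\gamma,r,s}$. The plan is to compute $\delta^j(\alpha^i\beta^j)$ exactly, show it equals a nonzero constant times $\alpha^i$, and then read off its valuation.

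\textbf{Step 1: reduce to powers of $\beta$.} Since $\sigma(\alpha)=\alpha$, the operator $\delta=\sigma-\id$ is $k((t))(\alpha)$-linear. Hence
\[\delta^j(\alpha^i\beta^j)=\alpha^i\,\delta^j(\beta^j).\]

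\textbf{Step 2: compute $\delta^j(\beta^j)$.} We show by induction on $j$ that $\delta^j(\beta^j)=j!$.
\begin{itemize}
\item From $\sigma(\beta)=\beta+1$ we get $\delta(\beta^n)=(\beta+1)^n-\beta^n=n\beta^{n-1}+(\text{a polynomial in }\beta\text{ of degree}<n-1)$.
\item So $\delta$ lowers the $\beta$-degree of a polynomial in $\beta$ by exactly one, multiplying the leading coefficient by the old degree.
\item Applying this $j$ times gives $\delta^j(\beta^j)=j!$. This is the same computation as in Proposition~\ref{prop:basis for L}, but it keeps track of the constant.
\end{itemize}
We need $j!\neq 0$ in $k$, which holds because $j\le d-1\le p-1$. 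Here $d\le p$ because $\sigma$ acts on an indecomposable module as a single Jordan block of size $d$ and has order $p$, so $(\sigma-\id)^p=\sigma^p-\id=0$. This is the only point where the characteristic enters. Consequently
\[v_L(\delta^j(\alpha^i\beta^j))=v_L(j!)+i\,v_L(\alpha)=i\,v_L(\alpha).\]

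\textbf{Step 3: compute $v_L(\alpha)$.} Since $k$ is algebraically closed, $L/K$ is totally ramified of degree $\#G=mp$. Therefore $v_L(t)=mp$, and from $\alpha^m=t$ we get $v_L(\alpha)=p$.

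\textbf{Step 4: sum up.} Each pair $(i,j)\in I_{\gamma,r,s}$ contributes
\[\frac{1}{mp}\cdot ip=\frac{i}{m}.\]
Summing over $I_{\gamma,r,s}$ gives $\sum_{(i,j)\in I_{\gamma,r,s}}\frac{i}{m}$, as claimed.

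\textbf{Expected difficulty.} The main care needed is in Step 2: getting the exact constant $j!$ rather than only the nonvanishing already recorded in Proposition~\ref{prop:basis for L}, and justifying $j<p$. Beyond that, the only real task is settling the indexing convention of the sum.
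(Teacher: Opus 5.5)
Your proposal is correct and follows essentially the same route as the paper: show that $\delta^j(\alpha^i\beta^j)$ is a nonzero constant multiple of $\alpha^i$, then use $\alpha=t^{1/m}$ and $\#G=mp$ (so that $v_L(\alpha)=p$) to get $\frac{1}{mp}\cdot ip=\frac{i}{m}$ for each $(i,j)\in I_{\gamma,r,s}$. Your version improves on the paper's induction in two places: it identifies the constant exactly as $j!$, and it justifies $j!\neq 0$ in $k$ via $j\le d-1\le p-1$, which the paper's claim $u\in k^{\times}$ uses without stating.
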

\begin{proof}
    We will show that for all $j \in \{0,\cdots, d-1\}$ we have that $\delta^{j}(\alpha^i\beta^j) = u \alpha^i$ for $u \in k^{\times}$.
    Let $(i,j) \in I_{\gamma,r,s}$. 
    When $j = 0$ we have $\delta^{0}(\alpha^i\beta^{0}) = \alpha^i$, and when $j = 1$ we have 
    \[\delta^{1}(\alpha^i\beta^{1}) = \alpha^i(\beta + 1) - \alpha^i \beta = \alpha^i. \] 
    Suppose we have $\delta^{n}(\alpha^i\beta^{n}) = u \alpha^i$ for some $n \in \{2,\cdots, d-1 \}$ and $u \in k^{\times}$. 
    Then by the proof of Proposition \ref{prop:basis for L} we have
    \[\delta^{n+1}(\alpha^i\beta^{n+1}) = \delta^{n} \left(  \alpha^i\sum_{\ell =1}^{n+1} \binom{n+1}{\ell} \beta^{n+1-\ell} \right),\]
    and by induction this equals $u \alpha^i$ for $u \in k^{\times}$.
    Then it follows that 
    \[v_{L}(\delta^{j}(\alpha^i\beta^j)) = v_{L}(u \alpha^i) = v_{L}(\alpha^i).\] 
    Since 
    $\alpha = t^{\frac{1}{m}}$, 
    we have
    \[\frac{1}{\# G}\sum_{j = 0}^{d-1} v_{L}(\delta^{j}(\alpha^i\beta^j)) =  \frac{1}{\# G} \sum_{(i,j) \in I_{\gamma,r,s}}v_{L}(t^{\frac{i}{m}})  =  \sum_{(i,j) \in I_{\gamma,r,s}} \frac{i}{m}.
    \] \qedhere
\end{proof}

\begin{thm}\label{defn of v function}
    Let $G= \ZpZ \rtimes \ZmZ$, $\gamma \in (\ZmZ)^{\times}$  and $\Delta_{G,\gamma}$ be the space of (connected) $G$-torsors. The $\mathbf{v}$-function associated to an indecomposable $d$-dimensional representation $V_{d,s}$ of $G$ is the function $\mathbf{v}_{V_{d,s}, \gamma}:\Delta_{G,\gamma} \rightarrow \frac{1}{|G|}\ZZ$  given by \[\mathbf{v}_{V_{d,s}, \gamma}(x) = \begin{cases}
        0 & x \: \text{is trivial} \\
        \sum_{(i,j) \in I_{\gamma,r,s}}\frac{i}{m} + \sum_{(i,j) \in I_{\gamma,r,s}} \left\lceil \frac{-iv_{L}(\alpha) - jv_{L}(\beta)}{e_{L/K}} \right\rceil & \text{otherwise}.
    \end{cases}\]
 \end{thm}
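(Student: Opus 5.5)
To prove Theorem \ref{defn of v function}, I will produce an explicit $k[[t]]$-basis of the tuning module $\Xi_L$, compute the determinant of its coordinate matrix, and read off $\mathbf{v}$ from $\frac{1}{\# G}v_L(\det)$. The trivial torsor is immediate: $\mathcal{O}_M=k[[t]]^{\oplus \# G}$ and $\Xi_M$ is spanned by the standard basis, so $\mathbf{v}=0$.

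\textbf{Reformulating the tuning module.} Identify $\Xi_L$ with $\Hom_G(V_{d,s}^{\vee},\mathcal{O}_L)$, that is, with $G$-equivariant maps determined by where they send the coordinates $x_1,\dots,x_d$. Because $\sigma$ acts on the coordinates through $\delta(x_i)=x_{i+1}$, such a map is fixed by the image $y=\phi(x_1)$. The constraints are then:
\begin{itemize}
\item $y\in \mathcal{O}_L^{\delta^d=0}$, and $\phi(x_{j+1})=\delta^j(y)$;
\item $\tau$-equivariance, which by Lemma \ref{lem; matrix of tau} forces $\tau(y)=\zeta_m^s y$ modulo lower terms of the upper-triangular matrix.
\end{itemize}
Using the basis of Corollary \ref{basis for OL} and the eigenvalue formula $\tau(\alpha^i\beta^j)=\zeta_m^{\gamma i-\gamma r j}\alpha^i\beta^j$ (up to lower $\beta$-degree terms), the admissible leading monomials are exactly the $\alpha^i\beta^j t^{n_{i,j}}$ with $(i,j)\in I_{\gamma,r,s}$. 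For each $j$ there is exactly one $i$ with $i-rj\equiv s\gamma^{-1}$, so $I_{\gamma,r,s}$ has $d$ elements, one for each $j$.

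\textbf{Choosing the basis.} For each $(i,j)\in I_{\gamma,r,s}$ I will take $y_j=\alpha^i\beta^j t^{n_{i,j}}+(\text{lower }\beta\text{-degree corrections})$, chosen so that $y_j$ is a $\tau$-eigenvector with eigenvalue $\zeta_m^s$ and lies in $\mathcal{O}_L$. The corrections come from twisting by the off-diagonal entries of $\tau$. The candidate basis vectors are then $(y_j,\delta y_j,\dots,\delta^{d-1}y_j)$ for $j=0,\dots,d-1$. By Proposition \ref{prop:basis for L}, $\delta^{j'}y_j=0$ for $j'>j$ and $\delta^j y_j$ is a unit multiple of $\alpha^i t^{n_{i,j}}$ (Lemma \ref{v fn lem}). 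The coordinate matrix is therefore triangular, and its determinant has valuation
\[\sum_j v_L(\delta^j(\alpha^i\beta^j)) + \# G\sum_j n_{i,j}.\]
Dividing by $\# G$ and applying Lemma \ref{v fn lem} gives the claimed formula.

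\textbf{Main obstacle.} The hard step is showing that these vectors span $\Xi_L$ over $k[[t]]$, and not merely a finite-index submodule. The plan is a leading-term argument:
\begin{itemize}
\item Any element of $\Xi_L$ is determined by its $y\in\mathcal{O}_L^{\delta^d=0}$, and its $\tau$-eigen-condition kills every component $\alpha^{i'}\beta^{j}$ with $(i',j)\notin I_{\gamma,r,s}$ in top $\beta$-degree.
\item Induct downward on the $\beta$-degree of $y$. Since the valuations $v_L(\alpha^i\beta^j)$ are pairwise distinct mod $p$ (as noted in the proof of Corollary \ref{basis for OL}), integrality of $y$ forces the top coefficient to lie in $t^{n_{i,j}}k[[t]]$.
\item Subtract the appropriate multiple of $y_j$ and continue with the lower $\beta$-degree remainder.
\end{itemize}
The delicate point is ensuring the lower-degree corrections in $y_j$ can be chosen integral without losing $\tau$-equivariance. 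I would handle this by averaging with the idempotent $\frac{1}{m}\sum_{l}\zeta_m^{-sl}\tau^l$, which is legitimate since $p\nmid m$ and preserves both $\mathcal{O}_L$ and the leading term.
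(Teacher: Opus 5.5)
Your proposal is correct and follows the paper's proof. In both, $\Xi_L$ is identified with $\{c\in\mathcal{O}_L^{\delta^d=0}:\tau(c)=\xi_1c\}$, the basis $\alpha^i\beta^jt^{n_{i,j}}$, $(i,j)\in I_{\gamma,r,s}$, is taken from Corollary~\ref{basis for OL}, and the triangular determinant is evaluated with Lemma~\ref{v fn lem}; your leading-term spanning argument only spells out what the paper gets by intersecting the decomposition of Corollary~\ref{basis for OL} with the $\tau$-eigenspace.

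The correction terms and the averaging idempotent are unnecessary. The normalisation $\tau(\beta)=\zeta_m^{-\gamma r}\beta$ makes each $\alpha^i\beta^j$ an exact $\tau$-eigenvector, and choosing $x_1$ to be a $\tau$-eigenvector (as the paper's condition $\tau(c_1)=\xi_1c_1$ presupposes) makes the $\tau$-condition exact. If you kept your twisted condition, the projector $\frac1m\sum_l\zeta_m^{-sl}\tau^l$ would be the wrong tool, since it produces exact eigenvectors, which do not satisfy that condition. The right choice would instead be $y_j=u(\delta)^{-1}(\alpha^i\beta^jt^{n_{i,j}})$, where $u(\delta)$ is the unit with $u(\delta)x_1$ a $\tau$-eigenvector.
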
 
\begin{proof}We begin by finding a basis for the tuning module. 
In our case $\Xi_L$ is given by
 \[\Xi_L = \left\{ (c_1,c_2,\cdots, c_d) \in \mathcal{O}_{L}^{\oplus d}: \begin{array}{ll}
     & \sigma(c_i) = c_{i} +c_{i+1} \: (i<d), \: \sigma(c_d) = c_d \\
     & \tau(c_1) = \xi_1c_1
\end{array} \right\}.\] 
We may rewrite this using the operator 
$\delta = \sigma - \id$. 
We have 
$\delta(c_1) = c_2$, $\delta^2(c_1) = c_3,\cdots, \delta^{d-1}(c_1) = c_d$ 
and 
$\delta^d(c_1) = 0$.
In particular, the tuning module is
\[\Xi_L = \left\{(c, \delta(c), \delta^2(c), \cdots, \delta^{d-1}(c)) \in \mathcal{O}_L^{\oplus d}: \begin{array}{ll}
     &  c \in \mathcal{O}_L^{\delta^d=0} \\
     & \tau(c) = \xi_1c
\end{array} \right\}.\] 
Notice that since we are assuming $\gamma \in (\ZmZ)^{\times}$ as we are in the connected case,
the action of $\tau$ gives 
\[\tau(\alpha^i \beta^j) = \zeta_m^{\gamma i}\alpha^i \zeta_m^{-\gamma rj}\beta^{j} = \zeta_m^{\gamma i - \gamma rj}\alpha^i \beta^{j}.\]
We can use the basis for $\mathcal{O}_L^{\delta^d=0}$ from Corollary \ref{basis for OL} to obtain the following $\mathcal{O}_L$-basis for $\Xi_L$:
\[\bigoplus_{(i,j) \in I_{\gamma,r,s}}k[[t]]\cdot \alpha^i \beta^j t^{n_{i,j}} \] 
where
\[ n_{i,j} = \left\lceil \frac{-iv_{L}(\alpha) - jv_{L}(\beta)}{e_{L/K}} \right\rceil,\] 
and $e_{L/K}$ is the ramification index of $L/K$.
Let $Q$ be the $d \times d$ matrix 
\[\left( \delta^{j}(\alpha^i\beta^j t^{n_{i,j}})_{(i,j) \in I_{\gamma,r,s}}\right).\] 
By Proposition \ref{prop:basis for L}, 
this is upper triangular and its determinant is
\[\det(Q) = \prod_{j = 0}^{d-1}\delta^j(\alpha^i\beta^j t^{n_{i,j}}) = t^{\sum_{(i,j) \in I_{\gamma,r,s}}n_{i,j}}\prod_{j = 0}^{d-1}\delta^j(\alpha^i\beta^j).\] 
Then 
\begin{align*}
        \mathbf{v}_{V_{d,s}, \gamma}(x) 
        = & \frac{1}{\# G}v_{L}(\det(Q)) 
        = \frac{1}{\# G} v_{L}\left( t^{\sum_{(i,j) \in I_{\gamma,r,s}}n_{i,j}} \prod_{j = 0}^{d-1}\delta^{j}(\alpha^i\beta^j)\right) \\
        = & \frac{1}{\# G} v_{L}\left( t^{\sum_{(i,j) \in I_{\gamma,r,s}}n_{i,j}} \right) + \frac{1}{\# G}\sum_{j = 0}^{d-1} v_{L}\left(\delta^{j}(\alpha^i\beta^j) \right).
\end{align*}
The result follows from the fact that 
\[\frac{1}{\# G} v_{L}\left( t^{\sum_{(i,j) \in I_{\gamma,r,s}}n_{i,j}} \right) = v_{K}\left( t^{\sum_{(i,j) \in I_{\gamma,r,s}}n_{i,j}} \right) = \sum_{(i,j) \in I_{\gamma,r,s}}n_{i,j}\]
and the equality from Lemma \ref{v fn lem}.
\end{proof} 
 \noindent By Section \ref{moduli space sec} the infinite-dimensional space $\Delta_{G,\gamma}$ admits a stratification 
\[\Delta_{G,\gamma} = \bigsqcup_{r \in  \ZZ_{\gamma}}\Delta_{G,\gamma}(r),\]
 where each $\Delta_{G,\gamma}(r)$ is the locus of points with ramification jump $r$. 
 The $\mathbf{v}$-function is constant on each $\Delta_{G,\gamma}(r)$, so for every $x \in \Delta_{G,\gamma}(r) \subset \Delta_{G,\gamma} \backslash\{0\}$, we may write $\mathbf{v}_{V_{d,s}, \gamma}(x) = \mathbf{v}_{V_{d,s}, \gamma}(r)$, where \[\mathbf{v}_{V_{d,s}, \gamma}(r) = \sum_{(i,j) \in I_{\gamma,r,s}}\frac{i}{m} +\sum_{(i,j) \in I_{\gamma,r,s}} \left\lceil \frac{-ip + jr}{mp} \right\rceil. \] We will write \[h(r) =  \sum_{(i,j) \in I_{\gamma,r,s}} \left\lceil \frac{-ip + jr}{mp} \right\rceil .\] 
    \begin{lem}\label{lem: change of var v fn} Let $n$ be a non-negative integer, and $D_{V_{d,s}}$ be the numerical invariant $D_{V_{d,s}} = \frac{d(d-1)}{2}$. The $\mathbf{v}$-function satisfies the change of variables formula \[\mathbf{v}_{V_{d,s}, \gamma}(r + n \cdot mp) = \mathbf{v}_{V_{d,s}, \gamma}(r) + n \cdot D_{V_{d,s}}.\]
 \end{lem}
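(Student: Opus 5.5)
The plan is to split $\mathbf{v}_{V_{d,s},\gamma}(r)$ into its two pieces, the age-type sum $\sum_{(i,j)\in I_{\gamma,r,s}} i/m$ and the ceiling sum $h(r)$, and to track each separately under $r \mapsto r' = r + n\cdot mp$.

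\emph{Step 1: the index set does not change.} The defining congruence of $I_{\gamma,r,s}$ is $i - rj \equiv s\gamma^{-1} \pmod m$. Since $r' \equiv r \pmod m$, we get $I_{\gamma,r',s} = I_{\gamma,r,s}$. The class $\gamma$ is also unchanged, and $r' \in \ZZ_\gamma$ whenever $r \in \ZZ_\gamma$: the congruence mod $m$ is preserved, and $p \nmid r'$ because $r' \equiv r \pmod p$. Consequently the first sum $\sum_{(i,j)\in I_{\gamma,r,s}} i/m$ is literally the same for $r$ and $r'$.

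\emph{Step 2: the ceilings shift by $nj$.} For each $(i,j)$ in this common index set we have
\[\left\lceil \frac{-ip + jr'}{mp}\right\rceil = \left\lceil \frac{-ip + jr}{mp} + nj\right\rceil = \left\lceil \frac{-ip + jr}{mp}\right\rceil + nj,\]
because $nj$ is an integer. Summing gives $h(r') = h(r) + n\sum_{(i,j)\in I_{\gamma,r,s}} j$.

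\emph{Step 3 (the key combinatorial step): show $\sum_{(i,j)\in I_{\gamma,r,s}} j = d(d-1)/2$.} This is the step I expect to be the real obstacle, though it is mild. The point is that $I_{\gamma,r,s}$ is a graph over the $j$-coordinate. For each fixed $j \in \{0,\dots,d-1\}$, the congruence $i \equiv s\gamma^{-1} + rj \pmod m$ has exactly one solution $i \in \{0,\dots,m-1\}$. So $I_{\gamma,r,s}$ contains exactly one pair for each $j$ and has $d$ elements. This matches the $d\times d$ matrix $Q$ used in the proof of Theorem \ref{defn of v function}, and it means the sum over $j$ is $0+1+\dots+(d-1) = D_{V_{d,s}}$.

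In this count I need the congruence read via $\gamma^{-1}$ as defined in Section \ref{moduli space sec}. In the connected case this is just the inverse in $(\ZmZ)^\times$. For the non-connected version the same argument runs modulo $m_\gamma$, with the shift being a multiple of $m_\gamma p$, and the same one-solution-per-$j$ reasoning applies.

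Combining Steps 1–3 gives $\mathbf{v}_{V_{d,s},\gamma}(r+n\cdot mp) = \mathbf{v}_{V_{d,s},\gamma}(r) + n D_{V_{d,s}}$.
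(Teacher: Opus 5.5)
Your proof is correct and follows the paper's argument: the first sum $\sum_{(i,j)\in I_{\gamma,r,s}} i/m$ is unchanged, each ceiling $\lceil (-ip+jr)/(mp)\rceil$ increases by exactly $nj$, and summing over the index set gives $n\cdot D_{V_{d,s}}$. You also justify two points the paper uses without comment: that $I_{\gamma,r,s}$ depends only on $r \bmod m$, and that it contains exactly one pair for each $j$, so $\sum_{(i,j)\in I_{\gamma,r,s}} j = d(d-1)/2$. Your sign $-ip+jr'$ is also the correct one; the first displayed line of the paper's proof has the typo $-j(r+n\cdot mp)$.
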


 \begin{proof}
     Since $\mathbf{v}_{V_{d,s}, \gamma}(r) = \sum_{(i,j) \in I_{\gamma,r,s}}\frac{i}{m} + h(r)$, it is enough to consider $h(r + n\cdot mp)$. We have \begin{align*}
         h(r + n\cdot mp) = &  \sum_{(i,j) \in I_{\gamma,r,s}}\left\lceil \frac{-ip - j(r + n\cdot mp)}{mp} \right\rceil \\
         = &\sum_{(i,j) \in I_{\gamma,r,s}} \left( \left\lceil \frac{-ip + jr}{mp} \right\rceil + n \cdot j \right) \\
         = & \left( \sum_{(i,j) \in I_{\gamma,r,s}} \left\lceil \frac{-ip + jr}{mp} \right\rceil \right) + n \cdot  \frac{d(d-1)}{2} = h(r) + n\cdot D_{V_{d,s}}.
     \end{align*}
 \end{proof} 
\subsection{$\mathbf{v}$-functions for non-connected torsors}\label{subsec; v fn non connected}
 
 Let $M = F^{\oplus n}$ be a $G$-\'etale $K$-algebra, where $F/K$ is a $H$-extension of $K$ for a subgroup $H \subset G$.
 By \cite[Lem.~$3.4$]{WoodYasuda2015MassI}, the $\mathbf{v}$-function is convertible, and hence satisfies that 
 \[\mathbf{v}_{V_{d,s}}(M) = \mathbf{v}_{V_{d,s}|_{H}}(F).\] 
 We will also need to consider the $\mathbf{v}$-function when it is restricted to the non-connected torsors.
 The space $\Delta_G^{\tame}$ corresponds to the union of sets of algebras
 \[D^{\tame} = \bigcup_{q \mid m}\{L \cong Q^{\oplus\frac{mp}{q}}: Q/K \: \text{a $\ZZ/q\ZZ$-extension}\}\] 
 where the union is over all divisors $q$ of $m$.
 Each $\ZZ/q\ZZ$ has order prime to $p$, and hence when we restrict to $\ZZ/q\ZZ$ using the convertability property, we find ourselves in the tame case.
 \begin{defi}\label{defi; age fn}
    Let $h \in GL_d(K)$ be an element of order $l$, where $l$ is coprime to $p$, and $\zeta_l$ a primitive $l$-th root of unity. Its eigenvalues may be written as $\zeta_l^{a_1}, \cdots, \zeta_l^{a_d}$, where $0 \leq a_i < l$ for each $1 \leq i \leq d$. The \emph{age} of $h$ is defined as
    \[\age(h) = \frac{1}{l}\sum_{i=1}^{d}a_i.\]
\end{defi} 
\noindent It follows from the convertability property that if $A \in D^{\tame}$, then 
\[\mathbf{v}_{V_{d,s}}(A) = \mathbf{v}_{V_{d,s}}|_{\ZZ/q\ZZ}(Q) = \age(h),\] 
where $q$ is the divisor of $m$ such that $A \cong Q^{\oplus\frac{mp}{q}}$ and $Q$ is a $\ZZ/q\ZZ$-extension.
Here $h \in \ZZ/q\ZZ$ acts on a uniformiser $\pi$ of $Q$ by 
\[h(\pi) = \zeta_e\pi,\] 
where $e$ is the ramification index of $Q/K$. 
It remains to calculate the $\mathbf{v}$-function for the fully ramified non-connected torsors.
For $\gamma \in \ZmZ$ let $m_{\gamma} = \frac{m}{\gcd(\gamma, m)}$.
As in Section \ref{sec; moduli space structure}, we may reduce to the connected case, by considering the set of algebras \[D_{\gamma} = \{L \cong E^{\gcd(\gamma, m)}: E/K \: \text{a $\ZpZ \rtimes \ZZ/m_{\gamma}\ZZ$-extension} \}.\] 
Recall that $\gamma^{\dagger} \in ( \ZZ/m_{\gamma}\ZZ)^{\times}$ comes from the action of $\tau_{\gamma}$ on the generators of the $\ZpZ \rtimes \ZZ/m_{\gamma}\ZZ$-extension.
 \begin{prop}\label{prop; non-connected torsors}
     Let $\gamma \in \ZmZ$ and $H_{\gamma} = \ZpZ \rtimes \ZZ/m_{\gamma}\ZZ$. The $\mathbf{v}$-function $
     \mathbf{v}_{V_{d,s},\gamma}: \Delta_{G, \gamma} \rightarrow \frac{1}{|H_{\gamma}|}\ZZ$ 
     is given by the formula
     \[\mathbf{v}_{V_{d,s},\gamma}(x) = \begin{cases}
        0 & x \: \text{is trivial} \\
         \sum_{(i,j) \in I_{\gamma,r,s}}\frac{i}{m_{\gamma}} + \sum_{(i,j) \in I_{\gamma,r,s}} \left\lceil \frac{-iv_{L}(\alpha) - jv_{L}(\beta)}{m_{\gamma}p} \right\rceil & \text{otherwise},
    \end{cases}\] where \[I_{\gamma,r,s} = \{(i,j) \in \{0,1,\cdots,m_{\gamma}-1\} \times \{0,1,\cdots, d-1\}: \: i-rj \equiv s \gamma^{\dagger -1} \pmod{m_{\gamma}}\}.\] 
 \end{prop}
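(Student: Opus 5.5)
The plan is to reduce to the connected case, Theorem \ref{defn of v function}, for the smaller group $H_{\gamma} = \ZpZ \rtimes \ZZ/m_{\gamma}\ZZ$, using the convertibility of the $\mathbf{v}$-function \cite[Lem.~$3.4$]{WoodYasuda2015MassI}. A point $x \in \Delta_{G,\gamma}$ that is not trivial corresponds to an algebra $M \cong E^{\oplus \gcd(\gamma,m)}$ in $D_{\gamma}$, where $E/K$ is an $H_{\gamma}$-extension. Convertibility then gives $\mathbf{v}_{V_{d,s}}(M) = \mathbf{v}_{V_{d,s}|_{H_{\gamma}}}(E)$. The trivial torsor has $\mathcal{O}_M = k[[t]]^{\oplus \# G}$, so the tuning module is all of $k[[t]]^{\oplus d}$ and the value is $0$; this handles the first case.

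For the non-trivial case, I first identify the restriction $V_{d,s}|_{H_{\gamma}}$. The generator $\sigma$ still acts by a single Jordan block, so the restriction is indecomposable over $H_{\gamma}$. The generator $\tau_{\gamma}$ of $\ZZ/m_{\gamma}\ZZ \subset \ZmZ$ is a suitable power of $\tau$. By Lemma \ref{lem;tau is upper triangular} and Lemma \ref{lem; matrix of tau}, $\tau_{\gamma}$ is upper triangular, and its first diagonal entry can be written $\zeta_{m_{\gamma}}^{s_{\gamma}}$, using the compatibility $\zeta_{m'n}^n=\zeta_{m'}$ of the chosen roots of unity. On the field side, $\tau_{\gamma}$ acts by $\tau_{\gamma}(\alpha_{\gamma}^i\beta^j) = \zeta_{m_{\gamma}}^{\gamma^{\dagger}(i-rj)}\alpha_{\gamma}^i\beta^j$, as computed in Section \ref{moduli space sec}, with $\gamma^{\dagger}$ invertible modulo $m_{\gamma}$. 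So Lemma \ref{Pries lem} applies to $E/K$. Proposition \ref{prop:basis for L} and Corollary \ref{basis for OL} also hold verbatim for $E$, with $m$ replaced by $m_{\gamma}$ and $e_{E/K} = m_{\gamma}p$.

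I then rerun the proof of Theorem \ref{defn of v function} for $E$. The tuning module is again $\{(c,\delta c,\dots,\delta^{d-1}c) : c \in \mathcal{O}_E^{\delta^d=0},\ \tau_{\gamma}(c) = \zeta_{m_{\gamma}}^{s_{\gamma}} c\}$. Its $k[[t]]$-basis is $\alpha_{\gamma}^i\beta^j t^{n_{i,j}}$, where $(i,j)$ ranges over the pairs with $i - rj \equiv s\gamma^{\dagger -1} \pmod{m_{\gamma}}$; this is exactly the set $I_{\gamma,r,s}$ in the statement. The exponent is $n_{i,j} = \lceil(-iv_E(\alpha_{\gamma}) - jv_E(\beta))/(m_{\gamma}p)\rceil$.

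The determinant is triangular, as before. Each diagonal factor is $u\,\alpha_{\gamma}^i t^{n_{i,j}}$ with $u \in k^{\times}$, as in Lemma \ref{v fn lem}. Dividing by $\# H_{\gamma}$ and using $v_E(\alpha_{\gamma}) = p$ gives the summand $i/m_{\gamma}$. The powers of $t$ contribute $\sum n_{i,j}$, and this yields the stated formula, with values in $\frac{1}{|H_{\gamma}|}\ZZ$.

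The main obstacle is the bookkeeping in the eigenvalue matching. I need to check that exactly one $i$ modulo $m_{\gamma}$ occurs for each $j$, so that $I_{\gamma,r,s}$ has $d$ elements and the matrix is square and triangular. This holds because $i \equiv s\gamma^{\dagger-1} + rj$ determines $i$ uniquely in $\{0,\dots,m_{\gamma}-1\}$. I also need to check that the label $s$ in the statement is consistent with $s_{\gamma}$. I would verify this by computing the first diagonal entry of the power of $\tau$ that equals $\tau_{\gamma}$, using the chosen roots of unity, and confirming that the congruence reduces modulo $m_{\gamma}$ to the one defining $I_{\gamma,r,s}$.
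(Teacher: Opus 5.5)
Your proposal is correct and follows the same route as the paper: use convertibility \cite[Lem.~3.4]{WoodYasuda2015MassI} to reduce to the connected $H_{\gamma}$-extension $E$, then apply the connected-case computation of Theorem \ref{defn of v function} with $m$ replaced by $m_{\gamma}$. The paper states this reduction in one line. You additionally carry out the bookkeeping it leaves implicit, namely that $\tau^{\gcd(\gamma,m)}$ has first diagonal entry $\zeta_{m_{\gamma}}^{s}$ (which does check out) and that $I_{\gamma,r,s}$ has exactly $d$ elements, giving a square triangular matrix.
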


 \begin{proof}
       Let $L \in D_{\gamma}$. Then $L \cong E^{\gcd(\gamma, m)}$ for $E/K$ such that $\Gal(E/K) \cong \ZpZ \rtimes \ZZ/m_{\gamma}\ZZ := H_{\gamma}$. For each $\gamma$, using the convertability property, we may the reduce to the case of connected $H_{\gamma}$-torsors, which gives for non-trivial $x$, \[\mathbf{v}_{V_{d,s}}|_{H_{\gamma}}(x) = \sum_{(i,j) \in I_{\gamma,r,s}}\frac{i}{m_{\gamma}} + \sum_{(i,j) \in I_{\gamma,r,s}} \left\lceil \frac{-iv_{L}(\alpha) - jv_{L}(\beta)}{m_{\gamma}p} \right\rceil .\]
 \end{proof}
\noindent When $\gcd(\gamma, m) = 1$, this formula is reduced to the case of connected $G$-torsors, as $m_{\gamma} = m$ in this case
\noindent When considering the torsors that are purely wildly ramified, we have the following corollary. \begin{cor}\label{cor; wild v funct}
     The $\mathbf{v}$-function $\mathbf{v}_{V}: \Delta_{G,\gamma = 0} \rightarrow \frac{1}{p}\ZZ$ is given by the formula \[\mathbf{v}_{V_{d,s}}(x) = \begin{cases}
        0 & x \: \text{is trivial} \\
         \sum_{j=0}^{d-1} \left\lceil \frac{ -jv_{L}(\beta)}{p} \right\rceil & \text{otherwise}.
    \end{cases}\]
 \end{cor}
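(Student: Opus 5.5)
The plan is to specialise Proposition \ref{prop; non-connected torsors} to $\gamma = 0$. In that case $\gcd(\gamma,m) = m$, so $m_{\gamma} = 1$ and $H_{\gamma} = \ZpZ \rtimes \ZZ/1\ZZ = \ZpZ$. By convertibility (\cite[Lem.~$3.4$]{WoodYasuda2015MassI}), a non-trivial $x \in \Delta_{G,0}$ corresponds to an algebra $L \cong E^{\oplus m}$ with $E/K$ a $\ZpZ$-extension. Hence $\mathbf{v}_{V_{d,s}}(x) = \mathbf{v}_{V_{d,s}|_{\ZpZ}}(E)$, which takes values in $\frac{1}{p}\ZZ$; this accounts for the stated target of the function. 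The trivial case gives $0$ directly, as in Proposition \ref{prop; non-connected torsors}.

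For non-trivial $x$, I will evaluate the formula of Proposition \ref{prop; non-connected torsors} with $m_{\gamma} = 1$. The index set becomes
\[I_{0,r,s} = \{(i,j) \in \{0\} \times \{0,\dots,d-1\} : i - rj \equiv s\gamma^{\dagger -1} \pmod 1\}.\]
The congruence modulo $1$ is vacuous, so $I_{0,r,s} = \{(0,j) : 0 \le j \le d-1\}$, which has exactly $d$ elements, one for each basis vector. This is consistent with the rank-$d$ tuning module, since $\mathcal{O}_E^{\delta^d=0} = \bigoplus_j k[[t]]\beta^j t^{n_{0,j}}$ by Corollary \ref{basis for OL} applied with $\alpha = 1$. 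With $i = 0$ throughout, the first sum $\sum i/m_{\gamma}$ vanishes. The second sum becomes $\sum_{j=0}^{d-1}\lceil -jv_L(\beta)/p\rceil$, because $m_{\gamma}p = p = e_{E/K}$.

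The main point needing care is that the machinery of Proposition \ref{prop; non-connected torsors} was set up assuming a tame part, namely the generator $\alpha_{\gamma}$ and $\gamma^{\dagger}$ invertible modulo $m_{\gamma}$. I will check that this degenerates correctly: $\ZZ/1\ZZ$ is trivial, $\alpha_{\gamma} = 1$, and there is no condition coming from $\tau$. The tuning module is then just $\{(c,\delta c,\dots,\delta^{d-1}c) : c \in \mathcal{O}_E^{\delta^d=0}\}$. To make this independent of Proposition \ref{prop; non-connected torsors}, I would rerun the determinant computation of Theorem \ref{defn of v function} directly. The matrix $Q = (\delta^{j'}(\beta^j t^{n_{0,j}}))$ is triangular by Proposition \ref{prop:basis for L}, and its diagonal entries are units times $t^{n_{0,j}}$ by the proof of Lemma \ref{v fn lem}. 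Dividing $v_E(\det Q)$ by $p$ then gives $\sum_j n_{0,j}$, which is the claimed formula. As a consistency check, it agrees with the $p$-cyclic formula of \cite[Prop.~$6.9$]{Yasuda2014pCyclicMcKay}.
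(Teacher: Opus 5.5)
Your proposal is correct and takes the same route as the paper: specialise Proposition \ref{prop; non-connected torsors} at $\gamma = 0$. Then $m_{\gamma}=1$, the tame sum $\sum i/m_{\gamma}$ vanishes, and the denominator becomes $p$. The only differences are that the paper removes the $\alpha$-terms by noting $v_L(\alpha)=0$ (the $\ZmZ$-part is unramified), whereas you observe that $i=0$ throughout $I_{0,r,s}$, and your independent determinant recomputation is an extra check not present in the paper.
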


 \begin{proof}
     As the $\ZmZ$-part is unramified, we have $v_L(\alpha) = 0$. When $\gamma = 0$, $\gcd(m,\gamma) = m$, and hence $m_{\gamma} = 1$. Then the tame part satisfies $\sum_{(i,j) \in I_{\gamma,r,s}}\frac{i}{m_{\gamma}} = \sum_{(i,j) \in I_{\gamma,r,s}} i =0$.
 \end{proof} 

\subsection{Decomposable representations}

From now on, denote by $V$ a $d$-dimensional $G$-representation.
Suppose $V$ has a decomposition into decomposables given by 
\[V = \bigoplus_{\lambda = 1}^{l}V_{d_{\lambda },s_{\lambda}} \quad (1 \leq d_{\lambda} \leq p), \sum_{\lambda = 1}^ld_{\lambda},\] 
where each $V_{d_{\lambda },s_{\lambda}}$ is indecomposable of dimension $d_{\lambda}$. 
In this case the numerical invariant $D_V$ is given by \[D_V = \sum_{\lambda = 1}^l \frac{d_{\lambda}(d_{\lambda} - 1)}{2}.\] 
By \cite[Lem.~$3.4$]{WoodYasuda2015MassI}, the $\mathbf{v}$-function is additive with respect to direct sums.
Therefore for a decomposable representation $V$ the $\mathbf{v}$-function is given by the following formula.

\begin{align*}
    \mathbf{v}_V(r)
    = & \sum_{\lambda = 1}^l \mathbf{v}_{V_{d_{\lambda },s_{\lambda}}}(r) \\
    = &  \sum_{\lambda = 1}^l \left(  \sum_{(i,j) \in I_{\gamma,r,s_{\lambda}}}\frac{i}{m_{\gamma}} +\sum_{(i,j) \in I_{\gamma,r,s_{\lambda}}} \left\lceil \frac{-ip + jr}{m_{\gamma}p} \right\rceil \right).
\end{align*}
 
\section{Stringy motives}\label{sec; Stringy motives} \noindent In this section, we calculate the stringy motive $M_{\str}(X)$ of the quotient variety $X = V/G$. 

\subsection{Motivic integration}

\begin{defi}\label{def; grothendieck ring}
    The Grothendieck ring of varieties over a field $k$, denoted by $K_0(\Var_k)$, is an abelian group generated by isomorphism classes of varieties $[Y]$ satisfying that if $Z$ is a closed subvariety of $Y$ then we have that $[Y] = [Y \backslash Z] [Z]$. The ring structure comes from the operation \[ [Y][Z] = [Y \times Z]. \] We set $\LL = [\AAA_k^1]$.
\end{defi} \noindent For our purposes, we modify $K_0(\Var_k)$ by quotienting out by the following relation: Let $f : Y \rightarrow Z$ be a morphism of varieties and $m \geq 0$ be an integer. Then if every geometric fiber of $f$ over an algebraically closed field $L$ is universally homeomorphic to the quotient of $\AAA_L^m$ by a finite linear group action, then we have $[Y] = \LL^m[Z]$. We denote this modified ring of Grothendieck varieties by $K_0'(\Var_k)$. We write $\mathcal{M}' = K_0'(\Var_k)[\LL^{-1}]$ for the localisation of $ K_0'(\Var_k)$ by $\LL$. There is a filtration of $\mathcal{M}'$ by subgroups \[F_m  = \langle [X]\LL^i : | \dim X + i \leq m \rangle, \]and we define \[\hatM = \lim_{\leftarrow}\mathcal{M}'/F_m. \] This is a commutative ring that is complete with the induced topology.

\begin{defi}
    An \emph{$n$-jet of a variety $X$} is a morphism \[\spec k[[t]]/(t^{n+1})\rightarrow X.\] The \emph{$n$-jet scheme} of $X$, $J_nX$, is the fine moduli space of $n$-jets of $X$, and so hence for any ring $A$ we have \[(J_nX)(A) = \Hom(\spec A[[t]]/(t^{n+1}),X).\] The \emph{arc space of X} is the projective limit \[J_{\infty}X = \lim_{n \rightarrow \infty }J_nX.\]
\end{defi}
\noindent For $n \in \NN$, let $\pi_n:J_{\infty }X \rightarrow J_nX$ be the truncation map from the arc space to the $n$-jet scheme.
\begin{defi}
    $C \subseteq J_{\infty}X$ is a stable subset if: there exists some $n$ such that $\pi_n(C) \subseteq J_nX$ is a constructible subset, we have that $C = \pi_n^{-1}(\pi_n(C))$ and the map $\pi_{m+1}(C) \rightarrow \pi_m(C)$ is a piecewise trivial $\AAA_k^d$ bundle for all $m \geq n$.
\end{defi}

\noindent The measure of a stable subset $C \subseteq J_nX$ is defined to be \[\mu_X(C) = [\pi_n(C)] \LL^{-nd} \in \hatM.\] For a general measurable subset $C \subseteq J_nX$, its measure is the limit of measures of stable subsets. Let $C \subseteq J_nX$ be a measurable subset and let $F: C  \rightarrow \ZZ \cup \{ \infty \}$ be a function. $F$ is a \emph{measurable function} if every fibre of $F$ is measurable.
For a measurable function $F$, we may define the integral 
\[ \int_C \LL^F = \sum_{a \in \ZZ}\mu_X(F^{-1}(a))\LL^a \in \hatM \cup \{\infty \}.\] 
We wish to calculate the integral $M_{\str}(X) =  \int_{\Delta_G} \mathbb{L}^{d-\mathbf{v}_{V}}$.
For constructible subsets $C \subseteq \Delta_{G, \gamma}(r) $ the measure $\nu(C)$ is defined to be 
\[\nu (C) = [C] \in \hat{\mathcal{M}}'.\]

\begin{thm}\label{stringy motive thm}
    Let $X = V/G$ be a quotient variety where $V$ is a $d$-dimensional $G$-representation and for every $\gamma \in \ZmZ$, let $\mathbf{v}_{V,\gamma}$ be the $\mathbf{v}$-function associated to $V$. Then if $D_{V} \geq p$ we have 
    \[ M_{\str}(X)  = \sum_{g \in \ZmZ}\LL^{d - \age(g)}  
         + (\LL - 1) \LL^{d - 1}\left(\frac{\sum_{\gamma^{-1} = 0}^{m-1}\sum_{\substack{s = 1\\s \in \ZZ_{\gamma}}}^{m_{\gamma}p - 1}\LL^{\dim \Delta_{G,\gamma}(s)-\mathbf{v}_{V,\gamma}(s)}}{1-\LL^{p - 1 - D_{V}}} \right).\]
\end{thm}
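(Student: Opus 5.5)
We start from Yasuda's formula \cite[Cor.~$16.3$]{Yasuda2024WildDM}, $M_{\str}(X)=\int_{\Delta_G}\LL^{d-\mathbf{v}_V}$, and split the domain as $\Delta_G=\Delta_G^{\tame}\sqcup\Delta_G^{\wild}$.

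\textbf{The tame part.} The space $\Delta_G^{\tame}$ is zero-dimensional, with points given by the tame torsors. By convertibility these points are indexed by $g\in\ZmZ$, and $\mathbf{v}_V(g)=\age(g)$, as explained after Definition \ref{defi; age fn}. So this part contributes $\sum_{g\in\ZmZ}\LL^{d-\age(g)}$, with the trivial torsor giving the term $\LL^d$. One point needs checking: that each tame stratum has measure exactly $1$ in $\hatM$, i.e.\ that the $\aut$-factors cancel. This is the same bookkeeping as in the tame McKay correspondence, and I will cite it.

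\textbf{The wild part.} Use the stratification
\[\Delta_G^{\wild}=\bigsqcup_{\gamma^{-1}=0}^{m-1}\bigsqcup_{r\in\ZZ_\gamma}\Delta_{G,\gamma}(r).\]
The function $\mathbf{v}_V$ is constant on each stratum, so the wild contribution is
\[\sum_{\gamma}\sum_{r}[\Delta_{G,\gamma}(r)]\,\LL^{d-\mathbf{v}_{V,\gamma}(r)}.\]
By Lemma \ref{lem; correspondence}, together with the modified relation in $K_0'(\Var_k)$, we get $[\Delta_{G,\gamma}(r)]=[(\mathbb{G}_m\times\AAA^{D-1})/\mu]$ with $D=\dim\Delta_{G,\gamma}(r)$. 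I will argue this equals $(\LL-1)\LL^{D-1}$. Since $\mu$ acts linearly, the class of $\AAA^{D}/\mu$ is $\LL^D$. The class of $\{0\}\times\AAA^{D-1}/\mu$ is $\LL^{D-1}$. Subtracting gives the claim. Pulling the common factor out, the wild part becomes
\[(\LL-1)\LL^{d-1}\sum_{\gamma}\sum_{r\in\ZZ_\gamma}\LL^{\dim\Delta_{G,\gamma}(r)-\mathbf{v}_{V,\gamma}(r)}.\]

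\textbf{Reducing to a geometric series.} Write each $r\in\ZZ_\gamma$ uniquely as $r=s+n\,m_\gamma p$ with $1\le s\le m_\gamma p-1$ and $n\ge0$. Both conditions defining $\ZZ_\gamma$ are periodic of period $m_\gamma p$: the congruence modulo $m_\gamma$ and the condition $p\nmid r$. Hence $r\in\ZZ_\gamma$ if and only if $s\in\ZZ_\gamma$. Now apply two shift formulas.
\begin{itemize}
\item Lemma \ref{lem: change of var dim}: $\dim\Delta_{G,\gamma}(s+n m_\gamma p)=\dim\Delta_{G,\gamma}(s)+n(p-1)$.
\item Lemma \ref{lem: change of var v fn}, applied to each summand and using additivity over $V=\bigoplus V_{d_\lambda,s_\lambda}$: $\mathbf{v}_{V,\gamma}(s+nm_\gamma p)=\mathbf{v}_{V,\gamma}(s)+nD_V$.
\end{itemize}
The second lemma is stated with period $mp$. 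I need the version with $m_\gamma p$ for non-connected $\gamma$. It follows by the identical computation applied to the formula of Proposition \ref{prop; non-connected torsors}, where the denominator is $m_\gamma p$.

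With these, each inner sum factors as
\[\sum_{s}\LL^{\dim\Delta_{G,\gamma}(s)-\mathbf{v}_{V,\gamma}(s)}\cdot\sum_{n\ge0}\LL^{n(p-1-D_V)}.\]
The hypothesis $D_V\ge p$ gives $p-1-D_V\le-1$, so the series converges in $\hatM$ to $(1-\LL^{p-1-D_V})^{-1}$. Summing over $\gamma$ yields the stated formula.

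\textbf{Main obstacle.} The hardest step is the rigorous justification that the strata $\Delta_{G,\gamma}(r)$, for all $\gamma$ including non-connected ones, have motivic measure exactly $(\LL-1)\LL^{\dim-1}$ in $\hatM$. This requires matching the normalization of the measure $\nu$ on the ind-scheme $\Delta_G$ with the parameter spaces of Lemma \ref{lem; correspondence}. It means handling the ind-perfection, the quotient by $\mu_{m/n^\dagger}$, and the factor coming from the quasi-finite map $\Delta_{H_\gamma}\to\Delta_G$ for non-connected torsors. My first attempt would be to use the universal-homeomorphism relation built into $K_0'(\Var_k)$ to absorb both the perfection and the finite quotients. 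For the non-connected strata, I would check that the degree of $\Delta_{H_\gamma}\to\Delta_{G,\gamma}$ is compensated by the automorphism groups, exactly as in the tame term.
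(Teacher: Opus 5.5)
Your proposal is correct and follows the paper's proof essentially step for step: tame/wild split, tame contribution $\sum_{g\in\ZmZ}\LL^{d-\age(g)}$, each wild stratum of class $(\LL-1)\LL^{\dim\Delta_{G,\gamma}(r)-1}$ via the linear-quotient relation in $K_0'(\Var_k)$, then the two shift lemmas and a geometric series convergent for $D_V\geq p$. Your subtraction argument for $[(\mathbb{G}_m\times\AAA^{D-1})/\mu]$ and your remark that the $\mathbf{v}$-shift is needed with period $m_\gamma p$ are more careful than the paper, which simply asserts both.

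Two small remarks. First, the indexing of tame torsors by $\ZmZ$ does not come from convertibility; it comes from the bijection $\conj^{\tame}(G)\cong\ZmZ$, which the paper obtains from the Hall-subgroup/Suzuki argument. Second, the extra identifications you worry about on non-connected strata need no separate degree bookkeeping: they act by scalars on the Artin--Schreier coefficients, so the same relation in $K_0'(\Var_k)$ absorbs them.
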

\begin{proof}
Let $X = V/G$ and $\mathbf{v}_{V}$ be the $\mathbf{v}$-function associated to $V$.
We wish to compute \[M_{\str}(X) =  \int_{\Delta_G} \mathbb{L}^{d-\mathbf{v}_{V}}.\]
We separately calculate the contributions from the tame and wild torsors using the decomposition 
\[\Delta_G =  \Delta_G^{\tame} \sqcup \Delta_G^{\wild} = \Delta_G^{\tame} \sqcup \left( \bigsqcup_{\gamma^{-1} = 0}^{m-1}  \Delta_{G,\gamma} \right).\] 
We first consider $\int_{\Delta_G^{\tame}} \mathbb{L}^{d-\mathbf{v}_{V}}$.
The space of tame $G$-torsors corresponds to the set of $G$-\'etale $K$-algebras 
\[D^{\tame}=\bigcup_{q \mid m}\{L \cong Q^{\oplus\frac{mp}{q}}: Q/K \: \text{a $\ZZ/q\ZZ$-extension}\}.\]
For any two $g_1,g_2\in \ZZ/q\ZZ$, we define an equivalence relation $\sim$ via conjugation by elements in $G$.
Specifically, there exists $h \in G$ such that $g_1 = hg_2h^{-1}$. 
For each $q$, we write
\[D_q^{\tame} = \{L \cong Q^{\oplus\frac{mp}{q}}: Q/K \: \text{a $\ZZ/q\ZZ$-extension}\},\] 
and for all $L \in D_q^{\tame}$, $\mathbf{v}_{V}(L) = \age(g)$ for a generator $g$ of $\ZZ/q\ZZ$.
Let $\conj^{\tame}(G)$ be the set of conjugacy classes of $G$ of order coprime to $p$.
Then \begin{align*}
        \int_{\Delta_G^{\tame}} \mathbb{L}^{d-\mathbf{v}_{V}} = & \sum_{A \in D^{\tame}}\LL^{d -\mathbf{v}_{V}(A)} = \sum_{q \mid m}\sum_{A \in D_q^{\tame}}\LL^{d -\mathbf{v}_{V}(A)} \\
        = & \sum_{q \mid m} \sum_{[g] \in (\ZZ/q\ZZ) / \sim }\LL^{d - \age(g)} = \sum_{[g] \in \conj^{\tame} (G)}\LL^{d - \age(g)}.
    \end{align*} \noindent However, we have that since the order of $\ZmZ$ is coprime to $[G:\ZmZ] = p$ by assumption, it is a Hall subgroup of $G$ and in particular, $\ZpZ$ is a normal complement of $\ZmZ$ in $G$. By \cite[Thm.~$1$]{Suzuki1963}, it follows that there are exactly $m$ tame conjugacy classes $\conj^{\tame}(G)$, as $\ZmZ$ is abelian and each element is in its own conjugacy classes. Fixing an embedding $\ZmZ \hookrightarrow G$, we may identify $\conj^{\tame}(G)$ with $\ZmZ$, and hence the sum $\sum_{[g] \in \conj^{\tame} (G)}\LL^{d - \age(g)}$ may be be written as a sum over $\ZmZ$.
For the wild torsors, 
since the spaces $\Delta_{G,\gamma}$ are infinite dimensional for each $\gamma$,
we use the stratification by ramification jump to calculate the integral.
Recall that we use the quotient $(\mathbb{G}_m \times \AAA_{k}^{\dim \Delta_{G,\gamma}(r) - 1})/\mu_{m/n^{\dagger}}$ as a parameter space of $\Delta_{G,\gamma}(r)$. 
However, by Lemma \ref{lem; correspondence}, the class in $\hatM$ of the quotient of an affine space by a linear finite group action is the same as the class of the affine space itself.
Hence in our calculation of the motivic integral it is enough to consider $\nu (\mathbb{G}_m \times \AAA_k^{\dim \Delta_{G,\gamma}(r) -1})$.
We have
\begin{align*}
    \int_{\bigsqcup_{\gamma^{-1} = 0}^{m-1}  \Delta_{G,\gamma}} \mathbb{L}^{d-\mathbf{v}_{V,\gamma}}
    = & \sum_{\gamma^{-1} = 0}^{m-1}  \int_{\Delta_{G,\gamma}} \mathbb{L}^{d-\mathbf{v}_{V,\gamma}} \\
    = & \sum_{\gamma^{-1} = 0}^{m-1} \sum_{r \in \ZZ_{\gamma}}[\mathbb{G}_m \times \AAA_k^{\dim \Delta_{G,\gamma}(r) -1}]  \LL^{d -\mathbf{v}_{V,\gamma}(r)} \\
    = &(\LL - 1) \LL^{d - 1} \sum_{\gamma^{-1} = 0}^{m-1} \sum_{r \in \ZZ_{\gamma}} \LL^{\dim \Delta_{G,\gamma}(r) - \mathbf{v}_{V,\gamma}(r)}.
\end{align*} 
Let 
$m_{\gamma} = \frac{m}{\gcd(\gamma,m)}$.
Then 
\[\sum_{\gamma^{-1} = 0}^{m-1} \sum_{r \in \ZZ_{\gamma}} \LL^{\dim \Delta_{G,\gamma}(r) - \mathbf{v}_{V,\gamma}(r)} = \sum_{\gamma^{-1} = 0}^{m-1}\sum_{r \in \ZZ_{\gamma}}\LL^{\left\lfloor \frac{r - 1}{m_{\gamma}} \right\rfloor + 1 - \left\lfloor \frac{\left\lfloor \frac{r - 1}{m_{\gamma}} \right\rfloor + 1}{p}\right\rfloor -\mathbf{v}_{V,\gamma}(r) }. \]
For any integer $n$ and $1 \leq s \leq (m_{\gamma}p)-1$, 
write $r = (m_{\gamma}p)\cdot n + s$. 
Then applying a change of variables, 
it follows by Lemmas \ref{lem: change of var dim} and \ref{lem: change of var v fn} that this is equal to
\[\sum_{\gamma^{-1} = 0}^{m-1} \sum_{\substack{s = 1\\s \in \ZZ_{\gamma}}}^{m_{\gamma}p - 1} \LL^{\dim \Delta_{G,\gamma}(s) - \mathbf{v}_{V,\gamma}(s)}\sum_{n = 0}^{\infty}\LL^{(p - 1 - D_{V})n}.\] 
This converges if $D_{V} \geq p$, and we obtain that the contribution from the wild part is 
\[(\LL - 1) \LL^{d - 1}\left(\frac{\sum_{\gamma^{-1} = 0}^{m-1}\sum_{\substack{s = 1\\s \in \ZZ_{\gamma}}}^{m_{\gamma}p - 1}\LL^{\dim \Delta_{G,\gamma}(s)-\mathbf{v}_{V,\gamma}(s)}}{1-\LL^{p - 1 - D_{V}}} \right).\] 
\end{proof}

\begin{cor}
    If $D_{V} = p$ then we have
    \[M_{\str}(X) = \sum_{g \in \ZmZ}\LL^{d - \age(g)} + \LL^d \sum_{\gamma^{-1} = 0}^{m-1}\sum_{\substack{s = 1\\s \in \ZZ_{\gamma}}}^{m_{\gamma}p - 1}\LL^{\dim \Delta_{G,\gamma}(s)-\mathbf{v}_{V,\gamma}(s)} \in \ZZ[\LL] .\]
\end{cor}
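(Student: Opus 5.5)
This is a direct specialisation of Theorem \ref{stringy motive thm}, so I will start from that formula with $D_V = p$ substituted. The exponent in the denominator becomes $p-1-D_V = -1$. The denominator is therefore $1-\LL^{-1} = \LL^{-1}(\LL-1)$.

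Next I simplify the prefactor of the wild contribution:
\[(\LL-1)\LL^{d-1}\cdot \frac{1}{\LL^{-1}(\LL-1)} = \LL^{d}.\]
The wild part thus collapses to $\LL^d$ times the finite double sum
\[\sum_{\gamma^{-1}=0}^{m-1}\sum_{s\in\ZZ_\gamma,\,1\le s\le m_\gamma p-1}\LL^{\dim\Delta_{G,\gamma}(s)-\mathbf{v}_{V,\gamma}(s)}.\]
The tame part $\sum_{g\in\ZmZ}\LL^{d-\age(g)}$ is unchanged. This gives the displayed identity in $\hatM$.

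At the level of the proof of Theorem \ref{stringy motive thm}, the same conclusion reads as follows. After the change of variables $r=(m_\gamma p)n+s$, justified by Lemmas \ref{lem: change of var dim} and \ref{lem: change of var v fn}, the inner series is $\sum_{n\ge0}\LL^{-n}$. This series converges in $\hatM$ to $(1-\LL^{-1})^{-1}$, which is exactly the geometric factor used above. Since $D_V=p$ satisfies the hypothesis $D_V\geq p$, no extra convergence argument is needed.

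The remaining point, and the only one requiring any care, is the membership claim $\in\ZZ[\LL]$. The expression is a finite sum of powers $\LL^{e}$, and a priori the exponents $d-\age(g)$ and $d+\dim\Delta_{G,\gamma}(s)-\mathbf{v}_{V,\gamma}(s)$ lie only in $\frac1m\ZZ$. Strictly, then, the result lies in $\ZZ[\LL^{1/m}]$, or in $\ZZ[\LL^{\pm 1/m}]$ if any exponent is negative. I would therefore read the claim as saying that $M_{\str}(X)$ is a genuine finite polynomial-type expression in fractional powers of $\LL$, not an infinite series or a rational function. If integrality of the exponents is really intended, I would check it using the explicit formula of Theorem \ref{defn of v function}, whose ceiling terms are integers. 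I would not expect this to hold in general, and I would flag it rather than force it.
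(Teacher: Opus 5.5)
Your argument is correct and is exactly the paper's implicit one: the corollary is Theorem \ref{stringy motive thm} with $D_V=p$ substituted, and $(\LL-1)\LL^{d-1}(1-\LL^{-1})^{-1}=\LL^{d}(1-\LL^{-1})(1-\LL^{-1})^{-1}=\LL^{d}$ turns the wild contribution into a finite sum. Your caveat about ``$\in\ZZ[\LL]$'' is also justified, because the exponents lie only in $\frac{1}{m}\ZZ$ in general: for instance, $V=P_1\oplus\mathbf{1}$ for $S_3$ in characteristic $3$ has $D_V=3=p$, and the transposition has eigenvalues $1,1,1,-1$ and hence age $\frac12$, contributing the tame term $\LL^{7/2}$, so the membership should be read as ``finite Laurent polynomial in $\LL^{1/m}$''.
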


\noindent The formula for $M_{\str}(X)$ when $D_{V} \geq p$ is a rational function in $\LL$.
We define the \emph{stringy Euler number} $e_{\str}(X)$ of $X$ to be the rational number obtained by substituting $1$ for $\LL$ in the formula for $M_{\str}(X)$.
\begin{prop}\label{prop;str euler number}
    Let $X = V/G$ be the quotient variety from Theorem \ref{stringy motive thm}. 
    Then we have
    \[ e_{\str}(X) = \frac{m D_V}{D_{V} - p + 1}.\]
\end{prop}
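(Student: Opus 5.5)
We substitute $\LL = 1$ in the formula of Theorem \ref{stringy motive thm}; since the wild term has the form $(\LL-1)\cdot\frac{N(\LL)}{1-\LL^{p-1-D_V}}$, this is a $0/0$ limit and must be evaluated rather than read off.

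\emph{Tame part.} Each term $\LL^{d-\age(g)}$ specialises to $1$. The sum over $g\in\ZmZ$ therefore contributes $m$.

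\emph{Wild part.} Set $N = p-1-D_V$, which is negative because $D_V \geq p$. Write $\LL^{-N}=1+(\LL-1)(1+\LL+\cdots)$, so that
\[
\frac{\LL-1}{1-\LL^{N}} = \frac{(\LL-1)\LL^{-N}}{\LL^{-N}-1} = \frac{\LL^{-N}}{1+\LL+\cdots+\LL^{-N-1}}.
\]
At $\LL=1$ this equals $\frac{1}{-N}=\frac{1}{D_V-p+1}$, while the factor $\LL^{d-1}$ becomes $1$. The numerator
\[
\sum_{\gamma^{-1}}\sum_{s}\LL^{\dim\Delta_{G,\gamma}(s)-\mathbf{v}_{V,\gamma}(s)}
\]
specialises to the number of index pairs $(\gamma,s)$ with $1\le s\le m_\gamma p-1$, $s\in\ZZ_\gamma$. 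The wild contribution is therefore this count divided by $D_V-p+1$.

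\emph{Counting pairs.} The whole proof then reduces to showing that this count equals $m(p-1)$, because
\[
m+\frac{m(p-1)}{D_V-p+1}=\frac{mD_V}{D_V-p+1}.
\]
For fixed $\gamma$, $\ZZ_\gamma$ is cut out by a single congruence class modulo $m_\gamma$, since $\gamma^{-1}$ lives in the image of $\ZZ/m_\gamma\ZZ$, together with the condition $p\nmid s$. In the window $[1,m_\gamma p-1]$ that residue class contains exactly $p$ integers, namely one full period of length $m_\gamma p$. Because $\gcd(m_\gamma,p)=1$, these $p$ integers are pairwise distinct modulo $p$, so exactly one is divisible by $p$. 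If the class is $0 \bmod m_\gamma$, the excluded point $s=m_\gamma p$ lies outside the window, and I would check this boundary case separately; otherwise one point is removed. In either case each $\gamma$ contributes $p-1$ values of $s$.

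\emph{Main obstacle.} Summing $p-1$ over the $m$ values of $\gamma^{-1}\in\{0,\dots,m-1\}$ gives $m(p-1)$ and finishes the argument. The step I expect to require the most care is making the indexing rigorous. The stratification $\Delta_G^{\wild}=\bigsqcup_{\gamma^{-1}}\bigsqcup_{r}\Delta_{G,\gamma}(r)$ must be read as having exactly $m$ $\gamma$-strata, each meeting every residue window of length $m_\gamma p$ in exactly $p-1$ admissible jumps. This includes the $\gamma=0$ stratum, where $m_\gamma=1$ and the admissible $s$ are simply $1,\dots,p-1$. I would verify this by comparing with the reduction to $H_\gamma=\ZpZ\rtimes\ZZ/m_\gamma\ZZ$ and Lemma \ref{Pries lem}, under which the admissible jumps form precisely the residue class modulo $m_\gamma$ with multiples of $p$ removed.
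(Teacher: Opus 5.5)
Your proposal is correct and follows the route the paper takes implicitly: the paper gives no separate proof, but it rewrites the result as $\#\conj^{\tame}(G)+\frac{m(p-1)}{D_V-p+1}$. That is exactly your decomposition, in which the tame sum specialises to $m$, the factor $\frac{\LL-1}{1-\LL^{p-1-D_V}}$ tends to $\frac{1}{D_V-p+1}$, and each of the $m$ values of $\gamma^{-1}$ contributes $p-1$ admissible $s$ (your boundary case is fine, since $m_\gamma p$ would be excluded by $p\mid s$ anyway). Reading $\ZZ_\gamma$ as a residue class modulo $m_\gamma$ rather than the literal modulo $m$ is the right choice, because it is what the paper's own $S_3$ computation uses ($\ZZ_0=\{1,2\}$, $\ZZ_1=\{1,5\}$) and it is needed to get the count $m(p-1)$.
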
\noindent Suppose we have a crepant resolution $f:Y \rightarrow X$.
Then we have that 
\[M_{\str}(X) = [Y].\]
In particular, since 
\[e_{\topo}(Y) = e_{\topo}(M_{\str}(X)) = e_{\str}(X),\]
we have obtained a formula for the $\ell$-adic Euler characteristic of a crepant resolution $f:Y \rightarrow X$.
One should note that the formula for $e_{\str}(X)$ and hence $e_{\topo}(Y)$ for a crepant resolution $Y$ (provided it exists) may be written in the form \[\# \conj^{\tame}(G) + \frac{m(p-1)}{D_{V} - p + 1},\] from which it is clear that in general, Batyrev's formula $e_{\topo}(Y) = \# \conj(G)$ does not hold in the wild case.
\subsection{Stringy motives for representations of $S_3$}\label{subsec; S_3 examples stringy motive}
We now calculate the stringy motive for two quotient varieties with known crepant resolutions, coming from two different representations of $S_3$.
Let $P_2$ be the $3$-dimensional representation coming from the projective cover of the sign representation.
By \cite[Thm.~$4.6$]{yamamoto2021crepant}, there exists a crepant resolution $Y \rightarrow P_2/S_3$.
We use Theorem \ref{stringy motive thm} to calculate the stringy motive \[M_{\str}(P_2/S_3) = \int_{\Delta_{S_3}} \mathbb{L}^{3-\mathbf{v}_{P_2}}.\]
We may decompose $\Delta_{S_3}$ as \[\Delta_{S_3} = \Delta_{S_3}^{\tame} \sqcup \Delta_{S_3}^{\wild}. \] 
The integral over $\Delta_{S_3}^{\tame} $ is given by \[\int_{\Delta_{S_3}^{\tame}} \mathbb{L}^{3-\mathbf{v}_{P_2}} = \sum_{[g] \in \conj^{\tame}(S_3)}\LL^{3-\age(g)}.\]
There are two conjugacy classes in $\conj^{\tame}(S_3)$, 
the trivial class corresponding to the unramified torsor contributing $\LL^{3}$ to the total integral, 
and the non-trivial class, which satisfies $\age(g) = 1$, 
and hence this contributes $\LL^2$.
Thus we have 
\[\int_{\Delta_{S_3}^{\tame}} \mathbb{L}^{3-\mathbf{v}_{P_2}} = \LL^3+\LL^2.\]
For the wild torsors, 
we have $m_{\gamma}=1$ when $\gamma^{-1} = 0$ and $m_{\gamma} = 2$ when $\gamma^{-1} = 1$.
The integral over $\Delta_{S_3}^{\wild}$ is given by
\begin{align*}
  \int_{\Delta_{S_3}^{\wild}} \mathbb{L}^{3-\mathbf{v}_{P_2}}
  = &  (\LL - 1) \LL^{2} \left(\frac{\sum_{\gamma^{-1} = 0}^{1}\sum_{\substack{s = 1\\s \in \ZZ_{\gamma}}}^{m_{\gamma}p - 1}\LL^{\dim \Delta_{S_3,\gamma}(s)-\mathbf{v}_{P_2,\gamma}(s)}}{1-\LL^{-1}} \right) \\
    = & \LL^3 \left(\sum_{\substack{s = 1\\s \in \ZZ_{0}}}^{2}\LL^{\dim \Delta_{S_3,0}(s)-\mathbf{v}_{P_2,0}(s)}+\sum_{\substack{s = 1\\s \in \ZZ_{1}}}^{5}\LL^{\dim \Delta_{S_3,1}(s)-\mathbf{v}_{P_2,1}(s)} \right).
\end{align*} 
For $\gamma^{-1} = 0$ and $\gamma^{-1}=1$,
the sets $\ZZ_{\gamma}$ are
$\ZZ_0 = \{s \in \ZZ: 3\nmid s>0\} = \{1,2\}$ and
$\ZZ_1 = \{e \in \ZZ : 3 \nmid s>0, s \equiv 1 \bmod{2}\} = \{1,5\}$. 
The $\mathbf{v}$-function is given by the formula
\[\mathbf{v}_{P_2,\gamma}(r) = \begin{cases}
    \left\lceil \frac{r}{3} \right\rceil + \left\lceil \frac{2r}{3}  \right\rceil, & \gamma^{-1} = 0,\\
    1 + \left\lceil \frac{r}{6} \right\rceil + \left\lceil \frac{2r - 3}{6}  \right\rceil, & \gamma^{-1} = 1.
\end{cases}\]
Then applying the formula for the dimension from Corollary \ref{cor;non connected dim}, 
we obtain that
\begin{align*}
     \int_{\Delta_{S_3}^{\wild}} \mathbb{L}^{3-\mathbf{v}_{P_2}} = \LL^3(\LL^{-1} + \LL^{-1} + \LL^{-1}+\LL^{-2}) = 3\LL^2 + \LL.
\end{align*} 
Thus, the formula for stringy motive is 
\[M_{\str}(P_2/S_3) = \LL^3 + 4\LL^2 + \LL\]
and taking $\LL \rightarrow 1$,
we see that the stringy Euler number $e_{\str}(X)$ equals $6$.
In particular, the $\ell$-adic Euler characteristic of the crepant resolution $Y$ is $6$.
Another known example of a crepant resolution in the wild case comes from a $6$-dimensional representation of $S_3$. 
Let $P_1$ be the $3$-dimensional indecomposable representation coming from the projective cover of the trivial representation.
There is a morphism 
\[\text{Hilb}^3[\AAA_K^{2}] \rightarrow P_1 \oplus P_1/S_3\] 
where $\text{Hilb}^3[\AAA_K^{2}]$ is the Hilbert scheme of three points of $\AAA^{2}_K$, known as the Hilbert-Chow morphism.
It is known from \cite{Beauville1983, KumarThomsen2001, brion2005frobenius} that this morphism is crepant.
We calculate the stringy motive 
\[M_{\str}(P_1 \oplus P_1/S_3) = \int_{\Delta_{S_3}} \mathbb{L}^{6-\mathbf{v}_{P_1 \oplus P_1}}.\] 
The tame torsors contribute
\[\int_{\Delta_{S_3}^{\tame}} \mathbb{L}^{6-\mathbf{v}_{P_1 \oplus P_1}} =\sum_{[g] \in \conj^{\tame}(S_3)}\LL^{6-\age(g)} = \LL^6 + \LL^5,\] 
as $\age(g) = 0$ for the trivial conjugacy class and $\age(g) = 1$ for the non-trivial conjugacy class. 
For the wild torsors we have \begin{align*}
    \int_{\Delta_{S_3}^{\wild}}& \mathbb{L}^{6-\mathbf{v}_{P_1 \oplus P_1}}
    = (\LL - 1)\LL^5\left(\frac{\sum_{\gamma^{-1}=0}^{1}\sum_{\substack{s = 1\\s \in \ZZ_{\gamma}}}^{m_{\gamma}p - 1}\LL^{\dim \Delta_{S_3,\gamma}(s)-\mathbf{v}_{P_1 \oplus P_1,\gamma}(s)}}{1 - \LL^{-4}} \right) \\
    = & (\LL - 1)\LL^5\left(\frac{\sum_{\substack{s = 1\\s \in \ZZ_{0}}}^{2}\LL^{\dim \Delta_{S_3,0}(s)-\mathbf{v}_{P_1 \oplus P_1,0}(s)}+\sum_{\substack{s = 1\\s \in \ZZ_{1}}}^{5}\LL^{\dim \Delta_{S_3,1}(s)-\mathbf{v}_{P_1 \oplus P_1,1}(s)}}{1 - \LL^{-4}}\right)
\end{align*}
Using the additivity property of the $\mathbf{v}$-function, we have \[\mathbf{v}_{P_1 \oplus P_1} = 2\mathbf{v}_{P_1}.\] 
The $\mathbf{v}$-function satisfies 
\[\mathbf{v}_{P_1 \oplus P_1}(r)= \begin{cases}
     2 \left(\left\lceil \frac{r}{3} \right\rceil + \left\lceil \frac{2r}{3}  \right\rceil \right) & \gamma^{-1} = 0,\\
     1 + 2\left( \left\lceil \frac{2r}{6} \right\rceil + \left\lceil \frac{r-3}{6}  \right\rceil  \right) & \gamma^{-1} = 1.
\end{cases}\] 
Therefore the formula gives 
\begin{align*}
    \int_{\Delta_{S_3}^{\wild}} \mathbb{L}^{6-\mathbf{v}_{P_1 \oplus P_1}} 
    = & (\LL - 1)\LL^5\left(\frac{\LL^{-2}+\LL^{-3} +\LL^{-4}+\LL^{-5}}{1 - \LL^{-4}} \right) \\
    = & (\LL - 1)\left( \frac{\LL^7 + \LL^6 +\LL^5 + \LL^4}{(\LL - 1)(\LL + 1)(\LL^2+1)} \right)\\
    = & \LL^4\left( \frac{\LL^3+\LL^2 + \LL + 1}{(\LL + 1)(\LL^2+1)} \right) = \LL^4.
\end{align*} We have obtained the formula for the stringy motive
\[M_{\str}(P_1 \oplus P_1/S_3) = \LL^6 + \LL^5 + \LL^4.\]
We note that this matches the motivic version of Bhargava's mass formula in \cite[Cor.~$16.5$]{Yasuda2024WildDM}. 
In this case, taking $\LL \mapsto 1$ gives stringy Euler number $e_{\str}(P_1 \oplus P_1/S_3) =3$, and thus we obtain that the $\ell$-adic Euler characteristic of $\text{Hilb}^3[\AAA_K^{2}]$ is $3$.
In particular, 
it is different to when we take the $\mathbf{v}$-function coming from the $3$-dimensional representation.

\section{The $a$- and $b$- invariants}\label{sec; a and b invs}

\subsection{Raising functions}
In this section, we will assume that the base field $k$ is finite.
We recall the following definition from \cite[Def.~$4.1$]{darda2025batyrevmaninconjecturedmstacks}.

\begin{defi}
    A raising function $c: |\Delta_G| \rightarrow\RR_{\leq 0}$ is a constructible function satisfying the following properties:
    \begin{enumerate}
        \item One has that $c(x) = 0$ if and only if $x$ is the trivial element.
        \item The function $c$ is invariant for the isomorphisms of $|\Delta_G|$ which are induced from an automorphism $\FF_{q'}((t)) \xrightarrow{\sim} \FF_{q'}((t))$.
    \end{enumerate}
\end{defi} \noindent By \cite[Thm.~$4.9$]{moduliformaltorsorsii}, $\mathbf{v}$-functions are examples of raising functions. A raising function has the following associated invariants:

\begin{defi}
    The $a$-invariant associated to a raising function $c$ is given by \[a(c) = \sup_{g \in c(\Delta_G) \backslash \{0\}} \frac{1 + \dim(c^{-1}(g))}{g}.\]
    Let $D(c)$ be the set 
    \[D(c) = \{g \in c(\Delta_G) \backslash \{0\} : 1 + \dim (c^{-1}(g)) = a(c)g\}.\] 
    The $b$-invariant associated to $c$ is given by 
    \[ b(c) = \sum_{g \in D(c)}\# \left\{ \begin{array}{ll}
         & \text{irreducible components of $c^{-1}(g)$ of} \\
         & \text{dimension equal to $\dim(c^{-1}(g))$}
    \end{array}\right\}.\]
\end{defi}

\noindent In our case, the $a$-invariant depends on the value of the $\mathbf{v}$-function at both the wild and tame torsors.

\begin{lem}\label{lem; formula for a inv}
    Let $G = \ZpZ \rtimes \ZmZ$ and $\mathbf{v}_{V}$ be the $\mathbf{v}$-function associated to the $d$-dimensional $G$-representation $V$. 
    Then \begin{equation}
    a(\mathbf{v}) = \max \left(\max_{[g] \in \conj^{\tame}(G)\backslash\{ [1]\} }(\age(g))^{-1}, 
    \max_{r \in \cup_{\gamma^{-1}=0}^{m-1}\ZZ_{\gamma}}\frac{1 + \dim \Delta_{G,\gamma}(r)}{\mathbf{v}_{V,\gamma}(r)} \right).
\end{equation}

\end{lem}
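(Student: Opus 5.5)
The proof will unwind the definition of $a(c)$ for $c=\mathbf{v}_V$ by decomposing $\Delta_G\setminus\{0\}$ as in Section \ref{moduli space sec}:
\[\Delta_G\setminus\{0\} = \bigl(\Delta_G^{\tame}\setminus\{0\}\bigr)\sqcup \bigsqcup_{\gamma^{-1}=0}^{m-1}\bigsqcup_{r\in\ZZ_\gamma}\Delta_{G,\gamma}(r).\]
Since each piece is a constructible set on which $\mathbf{v}_V$ is constant, each fibre $\mathbf{v}_V^{-1}(g)$ for $g\neq 0$ is a finite (or at least well-behaved) union of pieces. Its dimension is the maximum of the dimensions of the pieces with value $g$. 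Hence the supremum over values $g$ can be rewritten as a maximum over pieces $P$ of $(1+\dim P)/\mathbf{v}_V(P)$: for a fixed value $g$, the quotient $(1+\dim \mathbf{v}^{-1}(g))/g$ equals the largest of $(1+\dim P)/g$ over the pieces $P$ in that fibre.

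For the tame pieces, the argument in the proof of Theorem \ref{stringy motive thm} shows that the nontrivial tame torsors are indexed by $\conj^{\tame}(G)\setminus\{[1]\}$ and that they form a zero-dimensional space. There $\mathbf{v}_V$ takes the value $\age(g)$, by convertibility (\cite[Lem.~$3.4$]{WoodYasuda2015MassI}) and Definition \ref{defi; age fn}. Each such point therefore contributes $(1+0)/\age(g)$, which gives the first inner maximum.

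For the wild pieces, I use Corollary \ref{cor;non connected dim} for $\dim\Delta_{G,\gamma}(r)$ and Proposition \ref{prop; non-connected torsors} for the constant value $\mathbf{v}_{V,\gamma}(r)$, extended to decomposable $V$ by additivity. Each stratum then contributes $(1+\dim\Delta_{G,\gamma}(r))/\mathbf{v}_{V,\gamma}(r)$, which gives the second inner maximum. Taking the larger of the two maxima gives the formula.

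The main obstacle is justifying that the supremum is attained, so that it can be written as a maximum over infinitely many strata. For this I will use Lemmas \ref{lem: change of var dim} and \ref{lem: change of var v fn}. Writing $r=m_\gamma p\,n+s$, the ratio becomes
\[\frac{1+\dim\Delta_{G,\gamma}(s)+n(p-1)}{\mathbf{v}_{V,\gamma}(s)+nD_V}.\]
This is a Möbius function of $n$, hence monotone in $n$. It tends to $(p-1)/D_V$, and it lies between its value at $n=0$ and this limit. So the supremum over each residue class is either attained at $n=0$ or is the limit $(p-1)/D_V$. In the latter case I would argue that the stated maximum should be read as a supremum. I would also check that zero values of $\mathbf{v}$ occur only at the trivial torsor, using the raising-function property from \cite[Thm.~$4.9$]{moduliformaltorsorsii}, so that no denominator vanishes. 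Finally, when several pieces share the same value $g$, taking the maximum of their dimensions is compatible with taking the maximum of the ratios, which completes the identification.
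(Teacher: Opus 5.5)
Your proposal is correct and follows essentially the same route as the paper: you split $\Delta_G$ into the zero-dimensional tame part, where $\mathbf{v}_V$ equals the age and contributes $\age(g)^{-1}$, and the wild strata $\Delta_{G,\gamma}(r)$, on which $\mathbf{v}_V$ is constant, and read off the ratios there; your remarks on fibres containing several pieces and on attainment are extra care that the paper omits. Your hedge about reading the maximum as a supremum is never needed: the stratum $\gamma=0$, $r=p-1$ has dimension $p-1$ and $\mathbf{v}_{V,0}(p-1)=\sum_\lambda\sum_{j<d_\lambda}\lceil j(p-1)/p\rceil=D_V$, so its ratio is $p/D_V>(p-1)/D_V$, and your monotonicity argument then shows the supremum is always attained at a stratum with $n=0$.
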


\begin{proof}
As the decomposition of $\Delta_G$ from $\S$\ref{moduli space sec} holds over a finite field, we may write \[\Delta_G = \Delta_{G}^{\tame} \sqcup \Delta_{G}^{\wild}.\] 
    Then
    \begin{align*}
         \sup_{g \in \mathbf{v}_{V}(\Delta_{G}^{\tame} \sqcup\Delta_{G}^{\wild})\backslash \{0\}}&\frac{1+\dim(\mathbf{v}_{V}(g)^{-1})}{g} \\
         = & \sup \left(\sup_{g \in \mathbf{v}_{V}(\Delta_{G}^{\tame})\backslash \{0\}}\frac{1+\dim(\mathbf{v}_{V}(g)^{-1})}{g}, \sup_{g \in \mathbf{v}_{V}(\Delta_{G}^{\wild})\backslash \{0\}}\frac{1+\dim(\mathbf{v}_{V}(g)^{-1})}{g} \right).
    \end{align*}
   When restricted to $\Delta_{G}^{\tame}$, the $\mathbf{v}$-function is equal to the age function, and the space is zero-dimensional. Hence we have that \[\sup_{g \in \mathbf{v}_{V}(\Delta_{G}^{\tame})\backslash \{0\}}\frac{1+\dim(\mathbf{v}_{V}(g)^{-1})}{g} = \max_{[g] \in \conj^{\tame}(G)\backslash\{ [1]\} }(\age(g))^{-1}.\] For the wild part, we use the stratification by ramification jump \[\Delta_{G}^{\wild} = \bigsqcup_{\gamma^{-1} = 0}^{m-1} \bigsqcup_{r \in \ZZ_{\gamma}}  \Delta_{G,\gamma}(r).\] Then for $g = \mathbf{v}_{V,\gamma}(r)$ we have \[\sup_{g \in \mathbf{v}_{V}(\Delta_{G}^{\wild})\backslash \{0\}}\frac{1+\dim(\mathbf{v}_{V}(g)^{-1})}{g}  =   \max_{r \in \cup_{\gamma^{-1}=0}^{m-1}\ZZ_{\gamma}}\frac{1 + \dim \Delta_{G,\gamma}(r)}{\mathbf{v}_{V,\gamma}(r)}.\] \qedhere
\end{proof}

\noindent We may use the $a$-invariant to study the discrepancies of singularities of $X = \AAA^d_k/G$.
Let $f: X \rightarrow Y$ be a modification (a proper birational morphism) and $K_{Y/X} = K_{Y} - f^*K_X = \sum_i a_iE_i$ be the relative canonical divisor, where the $a_i \in \QQ$ and the $E_i$ are the irreducible components.
We call the $a_i$ the \emph{discrepancies} of the $E_i$ with respect to $X$. The minimal discrepancy is \[\delta (X) = \inf_f \min_i a_i.\] 
The quotient variety $X$ is \emph{terminal} (resp. \emph{canonical}, \emph{log terminal}, \emph{log canonical}) if $\delta(X) > 0$ (resp. $\geq 0$, $> -1$, $\geq -1$). If $\delta(X) < -1$ then it is equal to $- \infty $. 
We have that \begin{align*}
    \dim \int_{\Delta_G \backslash \{0\}}\LL^{-\mathbf{v}_{V}} 
    = & \sup_g(\dim(\mathbf{v}_{V}^{-1}(g)) - g) \\
    = & d - \min \log \text{discrep}(\AAA_k^d / G).
\end{align*}
Then the minimal log discrepancy of $\AAA_k^d / G$
is zero if and only if $\dim \int_{\Delta_G \backslash \{0\}}\LL^{-\mathbf{v}_{V}} = -1$. 
For every $g > 0$,
we have the equivalence \[\sup_{g \in \mathbf{v}_{V}(\Delta_{G})}\frac{1+\dim(\mathbf{v}_{V}(g)^{-1})}{g} \leq 1 \iff \sup_g(\dim(\mathbf{v}_{V}^{-1}(g)) - g) \leq -1.\] 
In particular,
the value of the $a$-invariant provides information on the dimension $\dim \int_{\Delta_G \backslash \{0\}}\LL^{-\mathbf{v}_{V}}$,
and in turn, this provides information on the singularities of $X$.
\begin{thm}\label{thm; quotient singularities}
    Let $G = \ZpZ\rtimes\ZmZ$ and $X = \AAA_k^{d}/G$ be the quotient variety. $X$ has canonical (resp. terminal) singularities if and only if $\age(g) \geq 1$ (resp. $> 1$) for all $[g] \in \conj^{\tame}(G)\backslash \{[1]\}$ and $\dim \Delta_{G,\gamma}(r) - \mathbf{v}_{V,\gamma }(r) \leq -1$ (resp. $< -1$) for all $r \in \ZZ_{\gamma}$ in the range $\{1,\cdots m_{\gamma}p -1\}$.
\end{thm}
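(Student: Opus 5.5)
The plan is to reduce the statement to the numerical criterion recorded just before it: $X$ is canonical (resp. terminal) if and only if $\dim \int_{\Delta_G \backslash \{0\}}\LL^{-\mathbf{v}_V} \leq -1$ (resp. $<-1$), where this dimension equals $\sup_{x}(\dim(\mathbf{v}_V^{-1}(\mathbf{v}_V(x))) - \mathbf{v}_V(x))$ over nontrivial $x$. Equivalently, by Lemma \ref{lem; formula for a inv}, the condition is $a(\mathbf{v})\le 1$ (resp. $<1$). I will then evaluate this supremum separately on the tame and wild parts, using the decomposition $\Delta_G = \Delta_G^{\tame}\sqcup \bigsqcup_{\gamma^{-1}}\bigsqcup_{r\in\ZZ_\gamma}\Delta_{G,\gamma}(r)$ from Section \ref{moduli space sec}. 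As in the proof of Theorem \ref{stringy motive thm}, the integral splits as a sum of the tame part and the wild part. Since the dimension of a convergent sum is the maximum of the dimensions of its terms, with no cancellation because all coefficients are classes of varieties with positive leading terms, the condition holds on the whole integral if and only if it holds on each piece.

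For the tame part, $\Delta_G^{\tame}$ is zero-dimensional, and on the nontrivial classes we have $\mathbf{v}_V = \age(g)$. Each nontrivial class therefore contributes a term of dimension $-\age(g)$. The requirement $-\age(g)\le -1$ (resp. $<-1$) is exactly $\age(g)\ge 1$ (resp. $>1$) for all $[g]\in \conj^{\tame}(G)\backslash\{[1]\}$.

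For the wild part, each stratum contributes $[\mathbb{G}_m\times\AAA^{\dim\Delta_{G,\gamma}(r)-1}]\LL^{-\mathbf{v}_{V,\gamma}(r)}$, which has dimension $\dim\Delta_{G,\gamma}(r)-\mathbf{v}_{V,\gamma}(r)$. The wild condition is therefore that this quantity is $\le -1$ (resp. $<-1$) for every $r\in\ZZ_\gamma$ and every $\gamma$. The remaining step is to reduce from all $r$ to the residues $1\le s\le m_\gamma p-1$. Write $r=m_\gamma p\, n+s$. Lemmas \ref{lem: change of var dim} and \ref{lem: change of var v fn} give
\[\dim\Delta_{G,\gamma}(r)-\mathbf{v}_{V,\gamma}(r) = \dim\Delta_{G,\gamma}(s)-\mathbf{v}_{V,\gamma}(s) + n(p-1-D_V).\]
When $D_V\ge p$ the correction term $n(p-1-D_V)$ is $\le 0$ and decreases in $n$, so the supremum over $r$ is attained already at $n=0$. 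This gives the stated finite range.

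I expect this reduction to be the main obstacle. It needs $D_V\ge p-1$, and the convergence hypothesis $D_V \geq p$ of Theorem \ref{stringy motive thm} suffices. If $D_V<p-1$, the integral diverges and $X$ is not even log canonical, so in that case the statement has to be read under the standing convergence assumption. A secondary point is that Lemma \ref{lem: change of var v fn} is stated with period $mp$ while Lemma \ref{lem: change of var dim} uses $m_\gamma p$. I would verify that the non-connected version in Proposition \ref{prop; non-connected torsors} shifts by $nD_V$ under $r\mapsto r+m_\gamma p n$. This holds because the ceiling term $\lceil (-ip+jr)/(m_\gamma p)\rceil$ shifts by $nj$, and summing over the $d$ values of $j$ gives $n\,d(d-1)/2$ per indecomposable summand.
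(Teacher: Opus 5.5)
Your argument is correct and follows the paper's route. Like the paper, you pass through the criterion $a(\mathbf{v})\le 1$, equivalently $\dim\int_{\Delta_G\setminus\{0\}}\LL^{-\mathbf{v}}\le -1$, and through the tame/wild split of Lemma~\ref{lem; formula for a inv}. The difference is that the paper then simply asserts the restriction to $1\le r\le m_\gamma p-1$. Your periodicity argument via Lemmas~\ref{lem: change of var dim} and \ref{lem: change of var v fn}, including your check that the period $m_\gamma p$ works for the non-connected strata, is exactly the justification the paper leaves out. Working with $\dim\Delta_{G,\gamma}(r)-\mathbf{v}_{V,\gamma}(r)$ rather than the ratio $(1+\dim)/\mathbf{v}$ is what makes that reduction immediate. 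The ratio is monotone in $n$ along a residue class with limit $(p-1)/D_V$, so its supremum need not occur at $n=0$.

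One correction: you do not need to read the statement under a standing hypothesis on $D_V$.

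\begin{itemize}
\item Take $\gamma^{-1}=0$, i.e.\ torsors induced from $\ZpZ$-torsors. Here $m_\gamma=1$ and every jump prime to $p$ occurs, as in the paper's $S_3$ computation where $\ZZ_0=\{1,2\}$.
\item So $r=p-1$ lies in the finite range, and $\dim\Delta_{G,0}(p-1)=p-1$.
\item By Corollary~\ref{cor; wild v funct}, since $d_\lambda\le p$,
\[\mathbf{v}_{V,0}(p-1)=\sum_{\lambda}\sum_{j=0}^{d_\lambda-1}\left\lceil \frac{j(p-1)}{p}\right\rceil=\sum_{\lambda}\frac{d_\lambda(d_\lambda-1)}{2}=D_V.\]
\item Hence the theorem's wild condition at this single point reads $p-1-D_V\le -1$ (resp.\ $<-1$). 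It therefore already forces $D_V\ge p$ (resp.\ $D_V>p$).
\end{itemize}

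Consequently, in your `if' direction the periodicity reduction always applies, and your `only if' direction needs no assumption at all. When $D_V<p$ (resp.\ $D_V\le p$), both sides of the equivalence are false, so the theorem holds exactly as stated.
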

\begin{proof}
   We prove the case of canonical singularities as the case of terminal singularities is analogous. We have that $X$ has canonical singularities if and only if the $a$-invariant $a(\mathbf{v}) \leq 1$. In particular, we need the maximum of $\max_{[g] \in \conj^{\tame}(G)\backslash\{ [1]\} }(\age(g))^{-1}$ and $\max_{r \in \cup_{\gamma^{-1}=0}^{m-1}\ZZ_{\gamma}}\frac{1 + \dim \Delta_{G,\gamma}(r)}{\mathbf{v}_{V,\gamma}(r)}$ to be less than or equal to $1$. This happens if and only if $\age(g) \geq 1$ for all $[g] \in \conj^{\tame}(G)\backslash \{[1]\}$ and $\dim \Delta_{G,\gamma}(r) - \mathbf{v}_{V,\gamma }(r) \leq -1$ for all $r \in \ZZ_{\gamma}$ in the range $\{1,\cdots m_{\gamma}p -1\}$.
\end{proof}
\noindent In contrast to the tame case, it is possible for $X$ to not have any canonical singularities despite all non-trivial elements $g \in \conj^{\tame}(G)$ having $\age(g) \geq 1$. Therefore, both the tame and wild torsors contribute to the existence of singularities. 
If a quotient variety $\AAA_k^d / G$ has a crepant resolution, this implies that it only has canonical singularities and in particular, that the $a$-invariant is equal to $1$.
Furthermore, in this case, the $b$-invariant is the number of exceptional prime divisors on the crepant resolution.
It may be read off the formula for $M_{\str}(\AAA_k^d / G)$ as the coefficient of the $\LL^{d-1}$ term.
\subsection{$a$- and $b$- invariants for $S_3$}
We explicitly compute the $a$- and $b$-invariants of the $\mathbf{v}$-functions coming from the representations $P_2$ and $P_1 \oplus P_1$ of $S_3$ and show that the $a$-invariant is indeed equal to $1$.
As the decomposition of $\Delta_{S_3}$ does not change when working over a finite field, the computation from $\S$\ref{subsec; S_3 examples stringy motive} remain valid in this case.
The $a$-invariant is given by the formula
\[a(\mathbf{v}_{P_2}) = \sup_{g \in \mathbf{v}_{P_2}(\Delta_{S_3}) \backslash \{0\}} \frac{1 + \dim(\mathbf{v}_{P_2}^{-1}(g))}{g}.\]
We consider the supremum taken over all non-trivial 
$g \in \mathbf{v}_{P_2}( \Delta_{S_3}^{\tame} ), \mathbf{v}_{P_2}(\Delta_{S_3}^{\wild} )$. 
For $g \in \mathbf{v}_{P_2}( \Delta_{S_3}^{\tame})$, we let $g = \age(h)$. 
Then 
\[\sup_{g \in \mathbf{v}_{P_2}( \Delta_{S_3}^{\tame}) \backslash \{0\}} \frac{1 + \dim(\mathbf{v}_{P_2}^{-1}(g))}{g} = \max_{g \in \conj^{\tame}(S_3)\backslash \{0\}} (\age(g))^{-1}=1,\]
and this comes from one irreducible component.
For
$g \in \mathbf{v}_{P_2}(\Delta_{S_3}^{\wild})$, 
we consider
\[ \sup_{g \in \mathbf{v}_{V_{d,s},\gamma}(\Delta_{S_3}^{\wild})}\frac{1+\dim(\mathbf{v}_{V_{d,s},\gamma}(g)^{-1})}{g} = \max_{r \in \ZZ_{0}\cup \ZZ_{1}}\frac{1 + \dim \Delta_{S_3,\gamma}(r)}{\mathbf{v}_{P_2,\gamma}(r)}.\] 
Since we only need consider the 
$r \in \{1,\cdots, m_{\gamma}p - 1\}$, 
the relevant $r \in \ZZ_0$ are $r =1,2$ and the relevant $r \in \ZZ_{1}$ are $r = 1,5$. 
Then we have
\[
    \max_{r \in \{1,2\}}\frac{1 + \dim \Delta_{S_3,\gamma = 0}(r) }{\mathbf{v}_{P_2,\gamma = 0}(r)} = 1
\]
and this is reached for both irreducible components $r = 1$ and $r=2$.
For $\gamma = 1$ we have \[ \max_{r \in \{1,5\}}\frac{1 + \dim \Delta_{S_3,\gamma = 1}(r) }{\mathbf{v}_{P_2,\gamma = 1}(r)} = 1\]
and this is reached at one irreducible component corresponding to $r =1$. So we have $a(\mathbf{v}_{P_2}) = 1$ and $b(\mathbf{v}_{P_2}) = 4$.
In particular, we have proved the following.
\begin{prop}
    The crepant resolution $f: Y \rightarrow P_2/S_3$ has four exceptional prime divisors.
\end{prop}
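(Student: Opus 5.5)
Since a crepant resolution $f\colon Y \to P_2/S_3$ exists by \cite[Thm.~$4.6$]{yamamoto2021crepant}, we have $M_{\str}(P_2/S_3) = [Y]$. We also have the computation $M_{\str}(P_2/S_3) = \LL^3 + 4\LL^2 + \LL$ from \S\ref{subsec; S_3 examples stringy motive}. The plan is to read off the number of exceptional prime divisors in two independent ways and check that they agree: once from the coefficient of $\LL^{d-1} = \LL^2$ in the stringy motive, and once as the $b$-invariant $b(\mathbf{v}_{P_2})$ computed just above.

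\textbf{Step 1: reduce to counting codimension-one strata.} I will use the decomposition $[Y] = \sum_{J} [E_J^\circ](\LL-1)^{\#J}$ over strata of the exceptional locus. This presumes the exceptional locus is a simple normal crossings divisor; failing that, I will argue directly from the filtration by dimension. The part of $[Y]$ of dimension $\geq d-1$ is $\LL^d$ plus $\LL^{d-1}$ times the number of exceptional prime divisors, which are the irreducible components of dimension $d-1$ of the exceptional locus. Equivalently, in the integral $\int_{\Delta_G}\LL^{d-\mathbf{v}}$, the terms of dimension exactly $d-1$ come from the strata $\mathbf{v}^{-1}(g)$ with $\dim \mathbf{v}^{-1}(g) - g = -1$. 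Since the quotient is canonical, $a(\mathbf{v}_{P_2}) = 1$, and these are precisely the strata with $1 + \dim(\mathbf{v}^{-1}(g)) = a\,g$, i.e.\ the set $D(\mathbf{v})$. Each top-dimensional irreducible component of such a stratum contributes one to the $\LL^{d-1}$ coefficient. So the $b$-invariant equals the coefficient of $\LL^{d-1}$, as remarked before the statement.

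\textbf{Step 2: count.} From the computation above, the components realising the $a$-invariant are:
\begin{itemize}
\item one tame class, with $\age = 1$, contributing $\LL^2$;
\item the two strata $r=1,2$ for $\gamma^{-1}=0$, each of the form $\mathbb{G}_m$ modulo a finite group, hence irreducible;
\item the stratum $r=1$ for $\gamma^{-1}=1$, which is irreducible by Lemma \ref{lem; correspondence}.
\end{itemize}
This gives $b = 4$, which matches the coefficient $4$ of $\LL^2$ in $\LL^3+4\LL^2+\LL$.

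\textbf{Main obstacle.} The delicate point is justifying that the coefficient of $\LL^{d-1}$ in $[Y]\in\hatM$ really counts exceptional prime divisors in the modified Grothendieck ring. For this I would compare Poincaré polynomials or $\ell$-adic cohomology with compact support, $H^{2d-2}_c$. The top compactly supported cohomology of each $(d-1)$-dimensional irreducible component has rank one, so the virtual Poincaré realisation, which factors through $K_0'$ because universal homeomorphisms preserve étale cohomology, extracts this count unambiguously.
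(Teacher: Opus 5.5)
Your proposal is correct and follows essentially the paper's route. The paper computes $a(\mathbf{v}_{P_2})=1$ and $b(\mathbf{v}_{P_2})=4$ from exactly the four strata you list, and identifies $b$ with the $\LL^{2}$-coefficient of $M_{\str}(P_2/S_3)=[Y]$, i.e.\ with the number of exceptional prime divisors. The paper only asserts that identification; your dimension-filtration and realisation argument justifies it in more detail.

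One formula needs correcting. The stratification identity is $[Y]=\sum_J[E_J^\circ]$, with no factors $(\LL-1)^{\#J}$. Those factors enter only through $\prod_{j\in J}(\LL-1)/(\LL^{a_j+1}-1)$ in the log-resolution formula for $M_{\str}$, which equals $1$ when all $a_j=0$. With your factors, each exceptional divisor would contribute in degree $d$ and the count would fail. Your subsequent argument is the right one to keep: $[Y]=\LL^d-[f(\mathrm{Exc})]+[\mathrm{Exc}]$ with $\dim f(\mathrm{Exc})\le d-2$.

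A minor inaccuracy: the $r=2$, $\gamma^{-1}=0$ stratum is $\mathbb{G}_m\times\AAA^1$ modulo a finite group, not $\mathbb{G}_m$ modulo a finite group. It is still irreducible, so the count of $4$ is unaffected.
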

\noindent On the other hand, we consider the $a$- and $b$-invariants for the $6$-dimensional representation.
For $g \in \mathbf{v}_{P_1 \oplus P_1}( \Delta_{S_3}^{\tame})$, 
we once again have that
\[\max_{g \in \conj^{\tame}(S_3)\backslash \{0\}} (\age(g))^{-1}=1.\] 
If $g  \in \mathbf{v}_{P_1 \oplus P_1}(\Delta_{S_3}^{\wild})$,
we have that \[\max_{r \in \ZZ_{0}\cup \ZZ_{1}}\frac{1 + \dim \Delta_{S_3,\gamma}(r)}{\mathbf{v}_{P_1 \oplus P_1,\gamma}(r)} < 1\] for all $r \in \ZZ_0 \cup \ZZ_1$.
In this case we have that $a(\mathbf{v}_{P_1 \oplus P_1}) = 1$ and $b(\mathbf{v}_{P_1 \oplus P_1}) = 1$.
We see that both representations of $S_3$ have $a$-invariant equal to $1$. It follows that both $P_2/S_3$ and $P_1 \oplus P_1/S_3$ have canonical singularities, but no terminal singularities.
\bibliographystyle{amsalpha}
\bibliography{ms}
\end{document}